\documentclass[11pt]{amsart}
\usepackage{amssymb,amsthm,xcolor}
\usepackage{eucal}
\usepackage[shortlabels]{enumitem}
\usepackage{thmtools,thm-restate}
\usepackage{hyperref}
\numberwithin{equation}{section}
\setlist[description]{font=\normalfont}


\usepackage{mathtools}


\declaretheorem[numberwithin=section]{theorem}
\newtheorem*{theorem*}{Theorem 1.5}  
\declaretheorem[sibling=theorem]{proposition}

\declaretheorem[sibling=theorem]{lemma}

\declaretheorem[sibling=theorem]{corollary}
\declaretheorem[style=definition,sibling=theorem]{definition}

\declaretheorem[style=remark,sibling=theorem]{remark}
\declaretheorem[sibling=theorem]{fact}


\DeclareMathOperator{\cf}{cf}
\DeclareMathOperator{\cof}{cof} 
\DeclareMathOperator{\ot}{ot}

\DeclareMathOperator{\dom}{dom}

\DeclareMathOperator{\cl}{cl}

\DeclareMathOperator{\range}{range}

\DeclareMathOperator{\stem}{stem}
\DeclareMathOperator{\tr}{tr}
\DeclareMathOperator{\spl}{spl}

\DeclareMathOperator{\osucc}{osucc}
\DeclareMathOperator{\Split}{split}

\DeclareMathOperator{\acc}{acc}\DeclareMathOperator{\nacc}{nacc}

\newcommand{\seq}[2]{\langle #1 : #2 \rangle}


\newcommand{\Dl}{\textup{\textsf{Dl}}}

\newcommand{\w}{\wedge}

\newcommand{\rest}{\upharpoonright}

\newcommand{\la}{\langle}
\newcommand{\ra}{\rangle}
\newcommand{\such}{:}  

\newcommand{\concat}{{}^\smallfrown{}}
\newcommand{\nothing}[1]{} 
\newcommand{\trl}{\trianglelefteq}


\newcommand{\cF}{\mathcal F}

\newcommand{\cD}{\mathcal D}

\newcommand{\B}{\mathbb B}

\newcommand{\bP}{\mathbb P} 
\newcommand{\bQ}{\mathbb Q}
\newcommand{\bV}{\mathbf V}
\newcommand{\bG}{\mathbf G}

\newcommand{\eps}{\varepsilon}

\DeclareMathOperator{\club}{club}


\newcount\skewfactor
\def\mathunderaccent#1#2 {\let\theaccent#1\skewfactor#2
\mathpalette\putaccentunder}
\def\putaccentunder#1#2{\oalign{$#1#2$\crcr\hidewidth
\vbox to.2ex{\hbox{$#1\skew\skewfactor\theaccent{}$}\vss}\hidewidth}}
\def\name{\mathunderaccent\tilde-3 }

\newcommand{\qa}{{\mathbb Q}^{\rm Miller}_\kappa} 
\newcommand{\qb}{{\mathbb Q}^{\rm Sacks}_\kappa} 
\newcommand{\qbW}{{\mathbb Q}^{\rm Sacks}_{(\kappa,W)}}
\newcommand{\qc}{{\mathbb Q}^{\rm Silver}_\kappa} 
\newcommand{\qd}{{\mathbb Q}^{\rm Laver}_\kappa} 

\newcommand{\xname}{x}
\newcommand{\checki}{\check{I}}
\begin{document}
\date{April 7, 2025}

\title[Forcing Diamond]{Forcing Diamond and Applications to Iterability}
\author{Heike Mildenberger}
\address{Mathematisches Institut, Albert-Ludwigs-Universit\"at Freiburg,  Ernst-Zermelo-Str. 1, 79104 Freiburg, Germany}
\email{heike.mildenberger@math.uni-freiburg.de}
\author{Saharon Shelah}
\address{Einstein Institute of Mathematics, The Hebrew University of Jerusalem, Edmond Safra Campus Giv'at Ram, Jerusalem, Israel}
\email{shelah@math.huji.ac.il}

\begin{abstract} 
We show that higher Sacks forcing at a regular limit cardinal and club Miller forcing at an uncountable regular cardinal  both add a diamond sequence. 
 We answer the longstanding question, whether $\kappa = \kappa^{<\kappa} \geq\aleph_1$ implies that $\kappa$-supported iterations of $\kappa$-Sacks forcing do not collapse $\kappa^+$ and are $\kappa$-proper in the affirmative. The results pertain to other higher tree forcings. 
\end{abstract} 
\thanks{The research of the second author is partially support by the Israel Science Foundation (ISF) grant no: 2320/23. This is number 1259 in his list. The authors are grateful to Craig Falls for generously funding typing services that were used during the work on the paper.}
\makeatletter
\@namedef{subjclassname@2020}{\textup{2020} Mathematics Subject Classification}
\makeatother
\subjclass[2020]{03E35, 03E05}
\keywords{Forcing theory, diamonds, $\kappa$-properness.}

\maketitle

\section{Introduction}

Tree forcings like Silver forcing, Sacks forcing, Miller forcing or Laver forcing are used to arrange combinatorial properties of the power set of $\mathbb R$. Baumgartner \cite{baumg:a-d}, Kanamori \cite{Kanamori-higher-sacks} and later many researchers found analogues for an uncountable regular cardinal $\kappa$ instead of $\omega$ that share at least part of the properties of their relatives at $\omega$. The extent of the analogy depends on properties of $\kappa$. Here we are mainly interested in conditions that ensure the preservation of $\kappa^+$ and a version of $\kappa$-properness (see \autoref{kappaproper}) for iterations with supports of size $\leq \kappa$.

Baumgartner \cite[Theorem 6.7]{baumg:a-d} showed that the $\kappa$-supported product of $\kappa$-Silver forcing does not collapse $\kappa^+$ under $\diamondsuit_\kappa$.
Kanamori showed that iterating $\kappa$-Sacks forcing with supports of size $\leq \kappa$ does not collapse $\kappa^+$ if $\diamondsuit_\kappa$  \cite[Theorem 3.2]{Kanamori-higher-sacks} holds or if $\kappa$ is strongly inaccessible \cite[Section 6]{Kanamori-higher-sacks}. The same proofs work also for numerous $(<\kappa)$-closed forcings in which forcing conditions are trees with club many splitting nodes. Iterations may be replaced by $(\leq \kappa)$-supported products \cite[Section 5]{Kanamori-higher-sacks}.

Shelah \cite{Sh:922} showed that $\kappa^{\kappa} = \kappa = \lambda^+\geq \aleph_2$ implies
$\diamondsuit_\kappa(\kappa \cap \cof(\mu))$ for any regular $\mu \neq \cf(\lambda)$.
Hence for successor cardinals $\kappa = \kappa^{<\kappa} \geq \aleph_2$, the conditions that Baumgartner and Kanamori used for their iterability proofs are fulfilled.

In \cite{MdSh:1191} we showed that in Kanamori's iterability theorem (see \autoref{kanamori_theorem} below) the condition  ($\diamondsuit_\kappa$ or $\kappa$ is inaccessible) can be replaced by the slightly weaker $(\Dl)_\kappa$ (see \autoref{Dl}).  There are regular limit cardinals $\kappa=\kappa^{<\kappa}$ with $\neg (\Dl)_\kappa$, see  \cite{Golshani1607.00751}.

Here we show that $\kappa = \kappa^{<\kappa} \geq \omega_1$ suffices 
as a premise for $\kappa$-properness and not collapsing $\kappa^+$ in 
$\leq \kappa$-supported iterations of higher Sacks forcing. 
We do this by showing that $\diamondsuit_\kappa$ is forced by the first  iterand of the respective forcings. A particularly simple case is \autoref{main_theorem_easy_case} for $\kappa$ weakly Mahlo.
A more complex relative of \autoref{main_theorem_easy_case} is \autoref{b14} for any regular limit $\kappa$. The latter combined with  
\autoref{l5} for $\kappa = \aleph_1$ and for $\kappa$ that are not strongly inaccessible proves \autoref{cor2}. In the case of \autoref{l5} we can work with a fixed stationary set of potential splitting levels, and we will explain this by introducing $W$-versions of the tree forcings in \autoref{l2}.

Our second  result is:
For club Miller and for club Laver forcing, the premise $\aleph_1 \leq \kappa^{<\kappa} = \kappa$ suffices for forcing a diamond and ensures iterability, see \autoref{main_theorem_3}.

For regular uncountable $\kappa$ under $\kappa^{<\kappa} > \kappa$ the forcing $\qb$ collapses $\kappa^+$ by \cite[Section 4]{MdSh:1191}. The combinatorial background \autoref{Bernstein_lemma} of \autoref{l5}  yields  in \autoref{l4(2)} another type of names for collapsing functions under $\kappa^{<\kappa} > \kappa$ for regular $\kappa$ that works also for the $W$-variants $\qbW$ (see \autoref{l2}). We do not consider singular $\kappa$ here. For singular $\kappa$, higher tree forcings share features of Namba forcing, see, e.g. the Namba trees of height $\omega_1$ used in \cite{Levine2024}.

Our first two theorems pertain to limit cardinals $\kappa$.

 \begin{theorem}\label{main_theorem_easy_case} 
  If $\kappa$ is weakly Mahlo and $S \subseteq \{\delta < \kappa: \delta \mbox{ regular limit}\}$ is stationary in $\kappa$, then $\qb \Vdash \diamondsuit_\kappa(S)$.
  The same holds for $\qc$. \end{theorem}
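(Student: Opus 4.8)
The plan is to produce, inside the $\qb$-extension, an explicit sequence read off from the generic and then to verify its guessing property by a fusion argument. Recall that a condition $T \in \qb$ is a perfect $(<\kappa)$-closed subtree of $2^{<\kappa}$ whose splitting levels form a club in $\kappa$, that the generic filter determines a single branch $\name g \in 2^\kappa$, and that $\qb$ admits fusion: writing $T' \le_\alpha T$ for ``$T' \le T$ and $T', T$ have the same first $\alpha$ splitting levels'', every $\le_\alpha$-decreasing sequence of length $\kappa$ has a lower bound. For $T' \le T$ and $s \in T'$ I write $T'_s$ for the condition $\{t \in T' : t \subseteq s \text{ or } s \subseteq t\}$. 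Fix, uniformly in the regular limit $\delta < \kappa$, a bijection $h_\delta$ between $\delta$ and an unbounded subset of $\delta$, and a decoding map $\mathrm{decode}$. The candidate sequence $\langle \name A_\delta : \delta \in S \rangle$ is then defined from $\name g$ by decoding, via $h_\delta$, the values of $\name g$ at the splitting levels of the generic tree below $\delta$. This is well defined for $\delta \in S$ because each such $\delta$ is a regular limit cardinal, so once $\delta$ is a limit of splitting levels (which is generically the case) there are $\delta$ many splitting levels below $\delta$ to decode from.

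To check the $\diamondsuit_\kappa(S)$-property it suffices, given a name $\name X$ for a subset of $\kappa$, a name $\name C$ for a club, and a condition $T$, to find $q \le T$ and $\delta \in S$ with $q \Vdash \delta \in \name C \wedge \name X \cap \delta = \name A_\delta$. First I would run a fusion producing $T^* \le T$ together with a club $D \subseteq \kappa$ such that for every $\gamma \in D$ and every node $s \in T^*$ of length $\gamma$, the condition $T^*_s$ decides $\name X \cap \gamma$ and decides members of $\name C$ cofinally in $\gamma$; this, including the bookkeeping that there are only $\le \kappa$ nodes to treat at each stage, is the routine part of the argument. Choosing $\delta \in S$ among the common limit points of $D$ and of the splitting levels of $T^*$ — possible since $S$ is stationary — makes $T^*$ force $\delta \in \name C$, provides $\delta$ many splitting levels of $T^*$ below $\delta$, and makes each length-$\delta$ node $s$ of $T^*$ decide a value $x_s := \name X \cap \delta$.

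The crux, and the step I expect to be the main obstacle, is a self-coding fixed point: I must find a single length-$\delta$ node $s^* \in T^*$ whose decoded value equals the value it forces, that is, $\mathrm{decode}(s^*) = x_{s^*}$. For then $T^*_{s^*} \Vdash \name A_\delta = \mathrm{decode}(s^*) = x_{s^*} = \name X \cap \delta$ and $T^*_{s^*} \Vdash \delta \in \name C$, completing the density argument (note that $T^*_{s^*}$ preserves the splitting levels below $\delta$, so $\name A_\delta$ really is decoded from $s^*$). Since $\mathrm{decode}(s^*)$ depends on the splitting choices along $s^*$, while $x_{s^*}$ depends on those same choices through the forced value of $\name X$, this is a genuine fixed-point problem and fails for an arbitrary $T^*$. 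I would resolve it by constructing the branch $s^*$ by recursion on its length, interleaving two tasks across the $\delta$ many splitting levels below $\delta$: at each splitting level I both decide the next membership $\alpha \in \name X$ (waiting until it is decided at some $\gamma_\alpha < \delta$) and, at a later splitting level indexed by $h_\delta(\alpha)$ that is still free for a left/right choice, set the corresponding decoding bit to match. The regularity of $\delta$ together with the $\delta$ many free splitting levels below $\delta$ lets both tasks be carried out cofinally without conflict, yielding $s^* \in T^*$ with $\mathrm{decode}(s^*) = x_{s^*}$.

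The argument for $\qc$ is essentially identical; the only change is that Silver conditions split uniformly at each splitting level, which merely makes the selection of the self-coding node $s^*$ more uniform. I therefore expect the weakly Mahlo hypothesis to enter only through the stationarity of $S \subseteq \{\delta < \kappa : \delta \text{ regular limit}\}$, which is exactly what guarantees a suitable $\delta$ at which the decoding has $\delta$ many free bits to work with.
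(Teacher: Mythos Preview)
Your overall shape --- find a regular $\delta \in S$ and a length-$\delta$ node $s^*$ that simultaneously decides $\name X \cap \delta$ and encodes that value into itself --- matches the paper's. But the coding you describe does not close.

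The phrase ``splitting levels of the generic tree'' has no referent: the generic object is a single branch $\name g \in {}^\kappa 2$, and splitting levels are data attached to \emph{conditions}, not to the filter. A legitimate name $\name A_\delta$ must be computed from $\name g$ and fixed ground-model objects alone. Reading your proposal charitably as $\name A_\delta(\alpha) = \name g(h_\delta(\alpha))$ for a ground-model bijection $h_\delta$, the self-coding step still fails. You need $s^*(h_\delta(\alpha)) = x_{s^*}(\alpha)$ for every $\alpha < \delta$, but the ordinals $h_\delta(\alpha)$ are fixed before the proof begins, whereas both the splitting levels of $T^*$ and the heights at which bits of $\name X$ become decided emerge only during the construction. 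Nothing guarantees that $h_\delta(\alpha)$ lands on a free bit of $T^*$, nor that $\name X(\alpha)$ is already decided when you reach that height. Your interleaving paragraph simply asserts both (``a later splitting level \ldots\ that is still free''), but a ground-model bijection cannot be made to cooperate with a club $D$ and a splitting set that are produced only later.

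The paper dissolves the circularity by replacing the bijection with a \emph{regressive} function $f \colon \kappa \to \kappa$ each of whose fibres $\{\gamma : f(\gamma) = \beta\}$ is stationary in every $\delta \in S$; weak Mahloness is used exactly to supply such an $f$, via $f(\gamma) = \beta$ when $\cf(\gamma) = \aleph_{\beta+1}$. The diamond bit $\name\nu_\delta(\beta)$ is then the club-majority value of $\name\eta$ on the fibre $f^{-1}\{\beta\}\cap\delta$. Regressiveness is precisely what makes the self-coding noncircular: building $s^*$ by straight trunk extension (no preliminary fusion is needed), at each limit height $\kappa_\eps$ the limit condition splits by the closure-of-splitting clause of $\qb$, and one sets that bit to $x_\eps(f(\kappa_\eps))$; since $f(\kappa_\eps) < \kappa_\eps$, this value has already been decided. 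Each bit $\beta$ is therefore written correctly on a club inside its stationary fibre, which is exactly what the majority-vote decoding reads.
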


 \begin{theorem}\label{b14} 
  If $\kappa$ is a regular limit cardinal, $\mu < \kappa$ is a regular cardinal, and $S \subseteq \kappa \cap \cof(\mu)$ is stationary in $\kappa$, then $\qb \Vdash \diamondsuit_\kappa(S)$.
  The same holds for $\qc$. \end{theorem}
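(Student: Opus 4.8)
The plan is to read a $\diamondsuit_\kappa(S)$-sequence off the generic branch and to verify the guessing property by a density argument that exploits the splitting freedom of $\qb$, in the spirit of the simpler \autoref{main_theorem_easy_case} but with the homogeneity that a regular $\delta$ provides there replaced by a construction of length $\mu$. First I would record the soft facts: since $\qb$ is $(<\kappa)$-closed it preserves $\kappa$ and adds no bounded subsets of $\kappa$, so for every $\qb$-name $\name{A}$ for a subset of $\kappa$ and every $\gamma<\kappa$, densely many conditions decide $\name{A}\cap\gamma$ outright as an element of $(\PP(\gamma))^V$, and $\name{C}$ is forced to be genuinely club. Note that for forcing the diamond we shall need only $(<\kappa)$-closure together with the fact that $\cf(\delta)=\mu<\kappa$ for $\delta\in S$; the fusion machinery is reserved for the iterability statements. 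Next, fix once and for all a \emph{name-independent} decoding scheme assigning to each $\delta<\kappa$ and each $t\in{}^\delta 2$ a set $D_\delta(t)\s\delta$, designed so that at closure points $\delta$ an already-determined initial segment $B\s\gamma$ of a subset of $\delta$ can be written into the left/right choices of a branch at splitting nodes on a block of levels lying \emph{above} $\gamma$, with $D_\delta$ inverting this encoding block by block. In $V[G]$, letting $x$ be the unique branch with $\{x\}=\bigcap_{T\in G}[T]$, I set $\name{A}_\delta=D_\delta(x\rest\delta)$ for $\delta\in S$; this is the candidate sequence.

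For the density core, fix a condition $T$ and names $\name{A},\name{C}$. By a standard elementary-submodel argument (taking hulls of size $<\kappa$ of some $H(\theta)$ containing $T,\name{A},\name{C}$ and the decoding scheme) the set $E$ of $\delta<\kappa$ that are closure points for these data contains a club. Since $S$ is stationary, choose $\delta\in S\cap E$, so $\cf(\delta)=\mu$, and fix an increasing cofinal sequence $\la\gamma_i:i<\mu\ra$ in $\delta$. I then build a decreasing sequence $\la T_i:i<\mu\ra$ below $T$: at step $i$ I refine to decide $\name{A}\cap\gamma_i$ as a fixed $B_i\in V$ and to force some element of $\name{C}$ into the interval $(\gamma_i,\gamma_{i+1})$, and then drive the stem up to level $\gamma_{i+1}$ through splitting nodes so that its choices encode the already-decided bits recorded in the $B_i$ according to the fixed scheme. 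By $(<\kappa)$-closure the sequence $\la T_i:i<\mu\ra$ has a lower bound, and I take $T'$ to be the condition whose stem is the length-$\delta$ sequence $t^\ast$ thus constructed.

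It remains to read off the conclusion. By construction $T'\Vdash x\rest\delta=\check{t}^{\,\ast}$, hence $T'\Vdash\name{A}_\delta=D_\delta(x\rest\delta)=\name{A}\cap\delta$, the point being that every bit encoded into $t^\ast$ was decided \emph{before} it was written, so there is no circularity in the fixed-point between $\name{A}_\delta$ and $\name{A}\cap\delta$. Moreover $T'\Vdash\delta\in\name{C}$, since $\delta=\sup_i\gamma_i$ is a limit of forced elements of $\name{C}$ and $\name{C}$ is closed, and $\delta\in S$ by choice. Thus below every condition and every club-name we find $\delta\in S$ forced into the club with $\name{A}\cap\delta=\name{A}_\delta$; as $\name{A}$ and $\name{C}$ were arbitrary, $\{\delta\in S:\name{A}\cap\delta=\name{A}_\delta\}$ is forced stationary, giving $\qb\Vdash\diamondsuit_\kappa(S)$. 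The argument for $\qc$ is identical once the Silver notion of splitting is substituted, which is why the same conclusion holds.

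The \textbf{main obstacle} is reconciling the rigid tree structure of $\qb$ with the need to write an arbitrary \emph{decided} set $B\s\delta$ into the branch: a condition splits only at club-many levels and its behaviour between splitting nodes is forced, so bits can be recorded only at splitting nodes. This is what forces the decoding scheme and the club $E$ to be chosen compatibly, and it is handled by using that one may always refine a condition to a splitting node above any prescribed level, so that each block $(\gamma_i,\gamma_{i+1})$ contains enough splitting nodes of the current condition to record all of $B_i$. The secondary point to get right is the uniformity of $D_\delta$ (independent of $\name{A}$) together with the fact that $\cf(\delta)=\mu<\kappa$ makes the construction have length $<\kappa$, so the limit exists as a condition by $(<\kappa)$-closure and automatically lands at a point of $\cof(\mu)\supseteq S$.
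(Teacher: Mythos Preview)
Your proposal has a genuine gap at exactly the point you flag as the ``main obstacle'': the existence of the uniform decoding scheme $D_\delta$. In $\qb$ the branch's values can be freely chosen only at splitting nodes of the current condition, and the set of splitting levels depends on the condition, not on $D_\delta$. If $D_\delta$ reads bit $\alpha$ from some fixed level $\ell(\alpha)$, you cannot guarantee $\ell(\alpha)$ is a splitting level of $T_i$; if instead $D_\delta$ decodes block $i$ via a surjection $2^{[\gamma_i,\gamma_{i+1})}\to 2^{\gamma_i}$, you need this surjection to remain onto when restricted to the nodes of \emph{every} possible $\qb$-subtree, and you do not argue that such a map exists. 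Saying ``each block contains enough splitting nodes'' misses the point: having many splitting nodes does not help if the decoder does not know which levels they occupy. A related problem is that your block boundaries $\gamma_i$ are determined by $D_\delta$ (hence fixed before $\name A,\name C,T$ are given), yet they must be closure points of a club that depends on $\name A,\name C,T$; you do not explain how to reconcile this.

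The paper's proof (\autoref{b5} and \autoref{f8_kappa_mu}) resolves both issues simultaneously and looks quite different from your sketch. It first establishes the combinatorial principle $\boxplus_{\kappa,\mu,S}$ via club guessing: for each $\delta\in S$ one has a cofinal sequence $\langle\alpha_{\delta,i}:i<\mu\rangle$ that will eventually land inside any prescribed club, together with a regressive $f:\kappa\to\kappa$ such that $\{\gamma<\alpha_{\delta,i+1}:f(\gamma)=\beta\}$ is stationary in $\alpha_{\delta,i+1}$. The diamond name is then: $\name\nu_\delta(\beta)=j$ iff for club-many $i<\mu$, club-many $\gamma\in f^{-1}(\beta)\cap\alpha_{\delta,i+1}$ have $\name\eta(\gamma)=j$. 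The construction runs for $\delta$ (not $\mu$) steps along a $\kappa$-approximating sequence; at each limit stage $\eps$ the trunk is extended at level $\kappa_\eps$ by the single bit $x_\eps(f(\kappa_\eps))$. Because $f^{-1}(\beta)$ is stationary in $\alpha_{\delta,i+1}$, the club of stages $\kappa_\eps$ meets it, so bit $\beta$ gets written correctly on a club --- and that is precisely what the decoder reads. The stationary redundancy is what makes the decoding uniform despite having no control over which levels of a given condition split, and club guessing is what lets the fixed $\alpha_{\delta,i}$'s fall inside the closure club of the particular $\name A,\name C,T$ at hand.
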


We note that for any regular limit $\kappa$, $\{\alpha < \kappa : |\alpha| = \alpha > \cf(\alpha)\}$ is stationary in $\kappa$, and hence for some regular $\mu< \kappa$,   $\{\alpha < \kappa : \alpha = |\alpha|\} \cap \cof(\mu)$ is stationary in $\kappa$. This shows that \autoref{main_theorem_easy_case} is not relevant for the proof of  \autoref{cor2}, since for any regular limit $\kappa$ with $\kappa^{<\kappa} = \kappa$ we can invoke \autoref{b14}.

Working with the approachability ideal, we present a different name of a diamond in Section \ref{S6}, and 
with this we settle the case of $\kappa = \aleph_1$ in Kanamori's question.
We apply following theorem with $\kappa = \aleph_1$ under {\sf CH} with $\sigma = \aleph_0$. The theorem also shows that in the case of $\kappa$ being a successor cardinal $\kappa = \lambda^+$, a diamond on a stationary subset of $\kappa \cap \cof(\cf(\lambda))$ is forced.

The subforcings $\qbW$ from \autoref{l2}, $W\subseteq \kappa$, $W$ stationary,  of $\qb$ respect weaker demands on splitting nodes than $\qb$ and are still $(<\kappa)$-complete. The forcing $\qbW$ for $W = \kappa$ is $\qb$.

The names of the diamonds in the next theorem are based on approachability and Bernstein combinatorics. 

\begin{theorem}\label{l5}  Assume that $\kappa ^{<\kappa} = \kappa \geq \aleph_1$.
 Let $W \subseteq \kappa$ be stationary. 
 Suppose there are $\sigma < \kappa$ and $S \subseteq \kappa$ with the following properties:
  \begin{enumerate}
 \item[$(\ast)$]
 \begin{enumerate}
 \item[(a)] $\kappa = \kappa^{<\kappa} = 2^\sigma$, 
 \item[(b)] $2^{<\sigma} < \kappa$, and
 \item[(c)] $S \subseteq \kappa \cap \cof(\cf(\sigma))$, and $S$ is stationary in $\kappa$ and $S \in \checki[\kappa]$.
 \end{enumerate}
 \end{enumerate}
Then  $\qbW \Vdash \diamondsuit_\kappa(S)$.
 \end{theorem}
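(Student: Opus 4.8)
The plan is to write down an explicit $\qbW$-name $\la \dot A_\delta : \delta\in S\ra$ for the candidate diamond sequence, decoded from the generic branch, and then to verify the guessing property by a $\kappa$-fusion that exploits the Bernstein richness of perfect $\qbW$-trees (\autoref{Bernstein_lemma}). First I would fix, in $V$: a bijection $\mathbf c$ between the bounded subsets of $\kappa$ and ${}^\sigma 2$, which exists because $\kappa=2^\sigma$ by $(\ast)$(a); a partition $\sigma=\bigsqcup_{i<\cf(\sigma)} u_i$ into $\cf(\sigma)$ blocks with each $|u_i|<\sigma$, so that $2^{|u_i|}\le 2^{<\sigma}<\kappa$ by $(\ast)$(b) while $\sum_i|u_i|=\sigma$; and a sequence $\bar a=\la a_\beta:\beta<\kappa\ra$ with a club $C_0$ witnessing $S\in\checki[\kappa]$. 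The latter supplies, for each $\delta\in S\cap C_0$, a cofinal $e_\delta=\la\gamma^\delta_i:i<\cf(\sigma)\ra\s\delta$ whose proper initial segments are all enumerated below $\delta$; by $(\ast)$(c) we have $\cf(\delta)=\cf(\sigma)$, so $e_\delta$ has exactly the length matching the number of blocks, with gaps large enough that each coding block fits below $\delta$. Writing $\dot\eta\in{}^\kappa 2$ for the generic branch, I let $\dot A_\delta$ (for $\delta\in S\cap C_0$) be obtained by reading the $|u_i|$ values of $\dot\eta$ immediately above each $\gamma^\delta_i$, concatenating them along the partition into a single $\dot c_\delta\in{}^\sigma 2$, and setting $\dot A_\delta=\mathbf c^{-1}(\dot c_\delta)\cap\delta$. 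Since $\qbW$ is $(<\kappa)$-complete (\autoref{l2}), the bounded object $\dot\eta\rest\delta$ already lies in $V$, so $\dot A_\delta$ is forced to be a genuine subset of $\delta$ in $V$, and the approachability scaffold makes the name uniform in $\delta$.

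To verify guessing, fix a name $\dot A$ for a subset of $\kappa$, a name $\dot C$ for a club, and a condition $p$; I must produce $q\le p$ and $\delta\in S$ with $q\Vdash\delta\in\dot C$ and $q\Vdash\dot A\cap\delta=\dot A_\delta$. Using the fusion and continuous-reading machinery for $\qbW$, I would first refine $p$ to $p'$ carrying a continuous $F\in V$ with $p'\Vdash\dot A=F(\dot\eta)$, and run the fusion so that along a club $D$ of levels the value $\dot A\cap\delta$ depends only on $\dot\eta\rest\delta$. The crucial bookkeeping is to keep the prospective coding levels — those just above the $\gamma^\delta_i$ — among the reserved $W$-splitting nodes of the fusion and to never use them to split the name, so that $\dot A\cap\delta$ is in fact determined by the complementary bits of $\dot\eta\rest\delta$, leaving the coding bits free. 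Simultaneously I fold $\dot C$ into the fusion, arranging that every level of $D$ is forced to be a limit point of $\dot C$ and hence to lie in $\dot C$.

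Now I would choose $\delta\in S\cap C_0\cap D$; such $\delta$ exist because $S$ is stationary and the fusion levels form a club, and here $S\in\checki[\kappa]$ is exactly what lets the approachability scaffold $e_\delta$ be aligned with the cofinal sequence of fusion stages converging to $\delta$, so that some $\delta\in S$ is genuinely fusion-closed. At such a $\delta$ I invoke the Bernstein lemma: the fusion tree truncated at $\delta$ splits fully enough along the reserved coding levels that every target $c\in{}^\sigma 2$ is realized by some branch, while the complementary bits governing $F$ stay fixed. Hence I may read off the value $a:=\dot A\cap\delta=F(\text{non-coding part of }\dot\eta\rest\delta)\in V$, form its code $c:=\mathbf c(a)\in{}^\sigma 2$, and pass to the branch whose reserved bits spell $c$. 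This yields $q\le p'$ with $q\Vdash\dot c_\delta=c$, so $q\Vdash\dot A_\delta=\mathbf c^{-1}(c)\cap\delta=a=\dot A\cap\delta$, together with $q\Vdash\delta\in\dot C$. As $p,\dot A,\dot C$ were arbitrary, $\{\delta\in S:\dot A\cap\delta=\dot A_\delta\}$ is forced to meet every club, i.e.\ to be stationary, which is $\diamondsuit_\kappa(S)$.

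The hard part is the simultaneous bookkeeping in the fusion together with the Bernstein step. I must reserve the coding levels above $e_\delta$ as free $W$-splitting nodes for all candidate $\delta$ at once while still reading $\dot A$ continuously off the complementary bits, and I must ensure that some reserved $\delta$ lands in $S$: here $2^{<\sigma}<\kappa$ keeps the branching at each of the $\kappa$ fusion stages below $\kappa$ so that the fusion survives, $2^\sigma=\kappa$ makes the code space ${}^\sigma 2$ match the bounded subsets so that every $\dot A\cap\delta$ is hit, and $S\in\checki[\kappa]$ produces the approachable, fusion-closed $\delta$. The single most delicate point is \autoref{Bernstein_lemma} itself — showing that a perfect $\qbW$-tree realizes all of ${}^\sigma 2$ on the reserved levels while freezing the read-off of $\dot A$ — and dovetailing it with the continuous reading so that the diagonal equation \emph{``coding bits $=\mathbf c(\dot A\cap\delta)$''} can always be solved inside the tree.
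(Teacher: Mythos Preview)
Your plan diverges from the paper's proof in two essential ways, and the second creates a genuine gap.

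\textbf{How the paper actually argues.} The diamond name is not built by reading designated coding bits of $\name\eta$. Instead, \autoref{Bernstein_lemma} produces for each $\delta\in S$ a single \emph{universal} function $h_\delta\colon {}^\delta 2\to {}^\delta 2$ with the property that for every pair $(f_1,f_2)$ of $\sigma$-tree embeddings of height $\delta$ with $f_1$ one-to-one, there is $\varrho\in{}^\sigma 2$ with $h_\delta(\bar f_1(\varrho))=\bar f_2(\varrho)$. The name is then simply $\name\nu_\delta=h_\delta(\name\eta\rest\delta)$. For the verification, the paper does \emph{not} run a $\kappa$-fusion. It first chooses a $\kappa$-approximating chain $\seq{N_\eps}{\eps<\kappa}$, lets $C=\{\alpha:N_\alpha\cap\kappa=\alpha\}$, fixes $\delta\in S\cap C$ once and for all, and then---using the approachability ladder at $\delta$ to extract a sub-chain $\seq{N_{\delta,i}}{i<\cf(\sigma)}$---builds a tree of conditions $\seq{p_{\sigma_i,\varrho}}{\varrho\in{}^{\sigma_i}2}$ of height only $\cf(\sigma)$, each deciding $\name x$ up to its trunk length and forcing into $\name D$. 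Setting $f_1(\varrho)=\tr(p_{\dom\varrho,\varrho})$ and $f_2(\varrho)=x_{\dom\varrho,\varrho}$ gives a pair in $\cF_{\sigma,\delta}$; the Bernstein property of $h_\delta$ hands back a single $\varrho$ with $p_{\sigma,\varrho}\Vdash\name\nu_\delta=\name x\rest\delta$. No bits are reserved, and no segregation of coding versus deciding levels is needed: the universality of $h_\delta$ absorbs \emph{whatever} tree of trunks the construction happens to produce.

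\textbf{Where your plan breaks.} Your scheme requires that during a $\kappa$-long fusion you reserve the levels $[\gamma^\delta_i,\gamma^\delta_i+|u_i|)$ as free $W$-splits \emph{for all candidate $\delta$ at once}, while simultaneously deciding $\dot A$ from the complementary bits. But the ladders $e_\delta$ vary with $\delta$, and at stage $\alpha$ of the fusion you do not know which $\delta$ will be chosen, hence you cannot tell whether level $\alpha$ should be reserved or used to decide $\dot A$. Approachability coherence does not save this: the union of the $e_\delta$'s is typically cofinal everywhere, so ``reserve them all'' leaves nothing for deciding. Even if you switched to fixing $\delta$ first (as the paper does), your explicit coding still demands that the trunks of the $p_{\sigma_i,\varrho}$ split at the \emph{prescribed} heights $\gamma^\delta_i$ and realise all $2^{|u_i|}$ patterns there; a $\qbW$-condition need not split at those levels at all (they may miss $W$, or the next split above may be far away). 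Finally, the diagonal equation you want to solve---choose coding bits equal to $\mathbf c(\dot A\cap\delta)$---is circular unless you have already guaranteed that $\dot A\cap\delta$ is independent of the coding bits, and you have not shown how to enforce that. The paper's use of the universal $h_\delta$ is precisely what dissolves this circularity.
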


Analogues of \autoref{l5} for the $W$-variants of $\kappa$-Silver forcing, club $\kappa$-Miller forcing and Laver forcing  hold. The approachability ideal $\checki[\kappa]$ is reviewed in Subsection \ref{S2.3}.

We recall:

 \begin{theorem}[Kanamori, \cite{Kanamori-higher-sacks}]\label{kanamori_theorem}
        Assume $\kappa^{<\kappa}=\kappa \geq \aleph_1$.
        Let $\gamma$ be an ordinal and let $\bP = \la \bP_\alpha, \name{\bQ}_\beta \such \alpha \leq \gamma, \beta< \gamma\ra$ be a ($\leq \kappa$)-support iteration such that for $\beta < \gamma$,  $\bP_{\beta} \Vdash \name{\bQ}_\beta = \qb$. Assume $\diamondsuit_\kappa$.   Then $\bP_{\gamma}$ has the following properties.
        \begin{enumerate}
        \item[(1)]
         $\bP_{\gamma}$ does not collapse $\kappa^+$.

          \item[(2)] $\bP_\gamma$ is $\kappa$-proper.
  \end{enumerate}
   \end{theorem}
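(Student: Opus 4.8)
The plan is to derive both conclusions from a single structural property of $\bP_\gamma$, namely a \emph{generalized fusion} combined with the guessing power of $\diamondsuit_\kappa$, rather than from a separate abstract iteration theorem. First I would check that $\bP_\gamma$ is $(<\kappa)$-closed: each iterand $\qb$ is $(<\kappa)$-closed, and a $(\le\kappa)$-support iteration of $(<\kappa)$-closed forcings is again $(<\kappa)$-closed. Hence $\bP_\gamma$ adds no new sequences of length $<\kappa$, preserves $\kappa$ and all smaller cardinals, and at iteration stages (and fusion limits) of cofinality $<\kappa$ admits greatest lower bounds. This last point is what makes the limit steps of the fusion go through.

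Next I would set up the fusion ordering for the iteration. Each coordinate $p(\beta)$ of a condition $p$ is a name for a $\kappa$-perfect tree whose splitting levels form a club, so I index the fusion by pairs $(\beta,i)$ with $\beta\in\mathrm{supp}(p)$ and $i<\kappa$, writing $q\le_{(\beta,i)}p$ to mean $q\le p$ and that, below the $i$-th splitting level, $q(\beta)$ preserves all splitting nodes of $p(\beta)$ (and likewise on the coordinates already handled). The key lemma here is that a fusion sequence $\la p_j:j<\kappa\ra$, increasing in the $\le_{(\beta,i)}$ sense along a suitable enumeration of $\mathrm{supp}(p)\times\kappa$, has a lower bound, obtained coordinatewise by intersecting the trees and invoking $(<\kappa)$-closure at limits.

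The heart of the argument, and the step I expect to be the main obstacle, is the construction of an $(N,\bP_\gamma)$-generic condition, which yields $\kappa$-properness. Fix $\theta$ large and $N\prec(H(\theta),\in)$ of size $\kappa$ with the closure demands of \autoref{kappaproper} (in particular $N^{<\kappa}\subseteq N$ and $N\cap\kappa^+\in\kappa^+$) and with $\bP_\gamma,p\in N$. I would enumerate in order type $\kappa$ the dense subsets of $\bP_\gamma$ (equivalently, the names for ordinals) lying in $N$, and build a fusion sequence starting at $p$. The difficulty is that at level $\delta$ of a tree there can be up to $\kappa$ many nodes and the support has up to $\kappa$ coordinates, so one cannot meet every relevant dense set at every node on every coordinate simultaneously and remain inside a single condition of the iteration. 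This is exactly where $\diamondsuit_\kappa$ enters: coding $N$, the enumeration, and the fusion built so far as a subset of $\kappa$, the diamond guesses correctly at a stationary set of levels $\delta$, and at each such $\delta$ I extend, coordinate by coordinate within $N$, to meet the dense set the guess points to. Because the guesses are correct stationarily often and each dense set of $N$ is pointed to cofinally, the fused lower bound $q$ forces $\dot G\cap N$ to be $N$-generic. Synchronizing the vertical (tree-level) and horizontal (coordinate) directions of the fusion at limit stages of $\gamma$ of cofinality $\ge\kappa$, so that a genuine condition of the iteration results, is the technical crux.

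Finally I would read off the two conclusions. Statement~(2) is immediate once the construction above is carried out for a club, hence stationary, family of models $N$, this being precisely the definition of $\kappa$-properness. For statement~(1), I would use that an $(N,\bP_\gamma)$-generic condition forces every function $\kappa\to\mathrm{Ord}$ named in $N$ to be captured by $N$; since $N\cap\kappa^+\in\kappa^+$ for the models in use, no $\bP_\gamma$-name can be forced to be a surjection of $\kappa$ onto $\kappa^+$, so $\kappa^+$ is not collapsed. Equivalently, $\kappa$-properness preserves stationary subsets of $\kappa^+\cap\cof(\kappa)$, which already guarantees preservation of $\kappa^+$. The entire proof thus hinges on the one fusion-plus-diamond construction, with the limit-stage amalgamation across coordinates being the only genuinely delicate point.
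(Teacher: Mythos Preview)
The paper does not prove this theorem: it is quoted as a known result of Kanamori, introduced with ``We recall:'' and attributed via the citation \cite{Kanamori-higher-sacks}, and then used as a black box in the derivation of \autoref{cor2}. There is therefore no in-paper proof to compare your proposal against.

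That said, your outline is a faithful sketch of Kanamori's original argument: $(<\kappa)$-closure of the iteration, a generalized fusion along a bookkeeping of $(\text{coordinate},\text{splitting level})$ pairs, and the use of $\diamondsuit_\kappa$ to anticipate which dense set to meet at each level so that only $<\kappa$ many nodes need be handled at any single stage. Your identification of the genuine difficulty---that a splitting front at height $\delta$ can have $\kappa$ many nodes, so one cannot naively enter all of them into a dense set---and of the diamond as the device that circumvents it, is exactly right. One minor imprecision: in Kanamori's construction the diamond guess is used to predict the \emph{name} $\name{x}$ (or the dense set) restricted to the partial generic below level $\delta$, not to code the model $N$ itself; the effect is the same, but the implementation is slightly more concrete than what you wrote. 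Your derivation of (1) from (2) via $N\cap\kappa^+\in\kappa^+$ is standard and correct.
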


Combining Kanamori's theorem with 
 \autoref{b14}, \autoref{l5} and with Shelah's result on the diamond on successor cardinals $\lambda^+$ with $2^\lambda = \lambda^+$ \cite{Sh:922}, we derive the following.

\begin{corollary} \label{cor2} \autoref{kanamori_theorem} holds without the assumption of $\diamondsuit_\kappa$ in the ground model.
\end{corollary}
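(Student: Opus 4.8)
The plan is to remove the standing hypothesis $\diamondsuit_\kappa$ from \autoref{kanamori_theorem} by observing that a $(\leq\kappa)$-support iteration of $\qb$ is itself a forcing of the same type, so that the \emph{first} iterand already forces $\diamondsuit_\kappa$, after which Kanamori's original argument applies verbatim in the extension. Concretely, I would split into cases according to the nature of $\kappa$, since the diamond-forcing inputs differ. First, suppose $\gamma = 0$: then $\bP_\gamma$ is trivial and there is nothing to prove, so assume $\gamma \geq 1$ and factor $\bP_\gamma \cong \qb \ast \name{\bP}_{[1,\gamma)}$, where $\name{\bP}_{[1,\gamma)}$ is the tail iteration computed in $V^{\qb}$.

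The core of the argument is to verify that $\diamondsuit_\kappa$ holds in $V^{\qb}$ and that the hypothesis $\kappa^{<\kappa} = \kappa$ is preserved by $\qb$. For the latter, $\qb$ is $(<\kappa)$-closed (indeed $(<\kappa)$-complete) and has a dense set of size $\kappa^{<\kappa} = \kappa$, so it has the $\kappa^+$-chain condition and preserves cardinals and cofinalities up to and including $\kappa^+$; since it adds no $({<}\kappa)$-sequences of ordinals, $\kappa^{<\kappa} = \kappa$ continues to hold in the extension. For the former I would distinguish three cases covering all $\kappa$ with $\kappa^{<\kappa} = \kappa \geq \aleph_1$:
\begin{enumerate}
\item[(i)] $\kappa = \aleph_1$: then {\sf CH} holds, and I apply \autoref{l5} with $\sigma = \aleph_0$ and a suitable stationary $S \subseteq \aleph_1 \cap \cof(\aleph_0)$ lying in the approachability ideal, obtaining $\qb = \qbW$ (for $W = \kappa$) $\Vdash \diamondsuit_\kappa(S)$, hence $\diamondsuit_\kappa$.
\item[(ii)] $\kappa = \lambda^+ \geq \aleph_2$ is a successor: then $2^\lambda = \kappa^{<\kappa} = \kappa$, so Shelah's result \cite{Sh:922} already gives $\diamondsuit_\kappa$ outright in $V$, which $\qb$ preserves; alternatively one invokes \autoref{l5} with $\sigma = \lambda$.
\item[(iii)] $\kappa$ is a regular limit cardinal: then, as noted after \autoref{b14}, there is a regular $\mu < \kappa$ with a stationary $S \subseteq \kappa \cap \cof(\mu)$, and \autoref{b14} yields $\qb \Vdash \diamondsuit_\kappa(S)$, hence $\diamondsuit_\kappa$.
\end{enumerate}

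Having secured $\diamondsuit_\kappa$ and $\kappa^{<\kappa} = \kappa$ in $V^{\qb}$, I would apply \autoref{kanamori_theorem} in $V^{\qb}$ to the tail iteration $\name{\bP}_{[1,\gamma)}$, concluding that it does not collapse $\kappa^+$ and is $\kappa$-proper over $V^{\qb}$. Finally I would compose: $\qb$ itself does not collapse $\kappa^+$ (being $\kappa^+$-c.c.\ and $(<\kappa)$-closed) and is $\kappa$-proper, and both non-collapse and $\kappa$-properness are preserved under two-step iteration, so $\bP_\gamma = \qb \ast \name{\bP}_{[1,\gamma)}$ enjoys conclusions (1) and (2). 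The main obstacle I anticipate is the last composition step: one must check that the notion of $\kappa$-properness in \autoref{kappaproper} is genuinely closed under two-step iteration, and that the tail $\name{\bP}_{[1,\gamma)}$, as reinterpreted over $V^{\qb}$, is literally a $(\leq\kappa)$-support iteration of $\qb$ in the sense required by \autoref{kanamori_theorem} — i.e.\ that factoring the iteration past its first coordinate preserves the iteration's form and support structure. This is standard for iterated forcing but deserves an explicit remark, since the whole reduction hinges on the first iterand manufacturing the diamond that the remaining iterands then consume.
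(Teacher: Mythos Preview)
Your overall strategy---factor $\bP_\gamma$ as $\qb \ast \name{\bP}_{[1,\gamma)}$, force $\diamondsuit_\kappa$ with the first iterand via the three-way case split, then apply \autoref{kanamori_theorem} to the tail---is exactly what the paper does (see \autoref{claim2.19} and the end of Section~\ref{S6}). The paper likewise uses \cite{Sh:922} for successors $\kappa \geq \aleph_2$, \autoref{b14} for regular limits, and \autoref{l5} for $\kappa = \aleph_1$.

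There is, however, one genuine error in your argument for the first step. You claim that $\qb$ ``has a dense set of size $\kappa^{<\kappa} = \kappa$, so it has the $\kappa^+$-chain condition.'' This is false: conditions in $\qb$ are perfect subtrees of ${}^{\kappa>}2$, hence subsets of a set of size $\kappa$, and there are $2^\kappa$ of them with no dense subset of size $\kappa$. Already for $\kappa = \omega$ ordinary Sacks forcing is famously \emph{not} c.c.c., and the same phenomenon persists at uncountable $\kappa$: $\qb$ is not $\kappa^+$-c.c. Consequently your justification that $\qb$ preserves $\kappa^+$ collapses. The paper supplies the correct argument in \autoref{claim2.21}: one runs a fusion of length $\kappa$ to show that any name for a function $\kappa \to \kappa^+$ is bounded by a set of size $\kappa$ of ground-model values, and the same construction yields $\kappa$-properness. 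You should replace the chain-condition claim by an appeal to this fusion argument; once that is done, the rest of your outline (including your correctly flagged caveat about the tail iteration retaining the right form) goes through and matches the paper.
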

 
This answers Kanamori's question from \cite{Kanamori-higher-sacks}.
It applies to Silver, Miller and Laver forcing at $\kappa$ as well. It applies to the $W$-variants if ($\ast$) of \autoref{l5} holds.

\medskip

 Our next theorem shows that
  for club $\kappa$-Miller/Laver forcing,  for any uncountable $\kappa$ with $\kappa^{<\kappa} = \kappa$ there is a  name of a diamond that is much simpler than the names used in \autoref{b14} and \autoref{l5}.  Part (3)  answers  \cite[Question 2.17]{MdSh:1191}. In the case of $\kappa$ being strongly inaccessible iterability was proved by Kanamori \cite[Section 6]{Kanamori-higher-sacks} for the Sacks version, and by Friedman and Zdomskyy work \cite{friedman-zdomskyy} for the Miller version.  
Part (3) of the following theorem extends their result  to uncountable $\kappa$ with $\kappa^{<\kappa} = \kappa$. 
   
  \begin{theorem}\label{main_theorem_3}
        Assume $\kappa^{<\kappa}=\kappa \geq \aleph_1$. 
  \begin{enumerate}
  \item[(1)] Both $\qa$ and $\qd$ force $\diamondsuit_\kappa$.

\item [(2)] If $S \subseteq \kappa$ is stationary, then $\qa$ forces $ \diamondsuit_\kappa(S)$ and the same holds for $\qd$.
\item[(3)] The iterability theorem holds as in \autoref{cor2}.
\end{enumerate}
\end{theorem}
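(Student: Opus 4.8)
The plan is to prove (2) directly, to read off (1) as the special case $S=\kappa$, and then to deduce (3) by factoring the iteration and feeding the resulting diamond into the Miller/Laver analogue of \autoref{kanamori_theorem}, exactly as \autoref{cor2} does for the Sacks case. Let $g\in\kappa^\kappa$ be the name for the generic branch added by $\qa$ (resp.\ $\qd$). Using $\kappa^{<\kappa}=\kappa$ (so $2^{<\kappa}=\kappa$), I first fix a surjection $c\colon\kappa\to\bigcup_{\alpha<\kappa}\PP(\alpha)$ whose fibres $S_a:=\{\eta<\kappa : c(\eta)=a\}$ are each stationary; such a $c$ exists by partitioning $\kappa$ into $\kappa$ many stationary sets and assigning one block to each bounded subset of $\kappa$. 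The candidate diamond is the sequence $\la \name{d}_\delta : \delta<\kappa\ra$ with $\name{d}_\delta = c(g(\delta))$ whenever $\delta$ is a splitting level of $g$ and $c(g(\delta))\subseteq\delta$, and $\name{d}_\delta=\emptyset$ otherwise. The reason for working with the \emph{club} variants is exactly that the splitting nodes have club-many admissible immediate successors, so the generic value $g(\delta)$ at a splitting level ranges over a club and can therefore be steered onto any prescribed code $a$.

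To verify the guessing I would use the usual density reduction: it suffices to show that for all names $\name A,\name C$ for a subset of $\kappa$ and a club, and every condition $p$, there are $q\le p$ and $\delta\in S$ with $q\Vdash\delta\in\name C$ and $q\Vdash\name{d}_\delta=\name A\cap\delta$. First I would run the standard $(<\kappa)$-fusion for these forcings to obtain $q_0\le p$ reading $\name A$ and $\name C$ continuously, i.e.\ with a club $D$ of levels such that for $\delta\in D$ the value $\name A\cap\delta$ and the truth of $\sup(\name C\cap\delta)=\delta$ depend only on $g\restriction\delta$; along the way I align $D$ with the splitting levels. Since $S$ is stationary I pick $\delta\in S\cap D$ that is a limit of forced elements of $\name C$, let $t=g\restriction\delta$ be the corresponding splitting node of length $\delta$, so that $(q_0)_t\Vdash\name A\cap\delta=a$ for the uniquely determined $a\subseteq\delta$ and $(q_0)_t\Vdash\delta\in\name C$. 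Now I choose an admissible immediate successor $\eta$ of $t$ with $c(\eta)=a$, which exists because the club successor set of $t$ meets the stationary fibre $S_a$, and set $q=(q_0)_{t\concat\la\eta\ra}$. Then $q\Vdash\name{d}_\delta=c(g(\delta))=c(\eta)=a=\name A\cap\delta$ and $q\Vdash\delta\in\name C\cap S$; density then yields $\diamondsuit_\kappa(S)$ in the extension, and $S=\kappa$ gives (1).

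For (3) I would mirror \autoref{cor2}. Factor a $(\le\kappa)$-support iteration of $\qa$ (resp.\ $\qd$) as $\qa*\name{\bP}'$, where $\name{\bP}'$ is the tail iteration as computed in $V^{\qa}$. By (1) we have $V^{\qa}\models\diamondsuit_\kappa$, and $\qa$ preserves $\kappa^{<\kappa}=\kappa$, so the Miller/Laver analogue of \autoref{kanamori_theorem}, applied with ground model $V^{\qa}$, shows that $\name{\bP}'$ is $\kappa$-proper and preserves $\kappa^+$ there. Since the single forcing $\qa$ is itself $(<\kappa)$-closed, $\kappa$-proper, and $\kappa^+$-preserving in $V$ (needing only $\kappa^{<\kappa}=\kappa$ and fusion, not $\diamondsuit$), the two-step composition, and hence the whole iteration, is $\kappa$-proper and preserves $\kappa^+$, which is the content of \autoref{cor2} for Miller and Laver.

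The main obstacle is the synchronization inside the guessing lemma: within a single fusion I must arrange that at the chosen level $\delta$ the value $\name A\cap\delta$ is already fixed by the stem $g\restriction\delta$, so that the code $a$ is available \emph{before} the successor $g(\delta)$ is selected, while keeping $\delta$ simultaneously in $S$, a splitting level, and a limit of $\name C$. This is precisely where the club-successor hypothesis is indispensable: an admissible successor set, being club, must meet the stationary fibre $S_a$, so the prescribed code can be realized as $c(g(\delta))$; for merely unbounded successor sets this realization can fail, which is why the simple name is tied to the club variants. A secondary point to handle with care is the legitimacy of the factorization in (3) and the fact that single-step $\kappa$-properness of $\qa,\qd$ composes with the tail's $\kappa$-properness, for which I would appeal to the two-step preservation already underlying \autoref{kanamori_theorem}.
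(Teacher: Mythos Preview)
Your diamond name is essentially the paper's: both code each bounded subset $a$ of $\kappa$ by a stationary fibre and set $\name d_\delta$ to be the subset coded by $\name\eta(\delta)$. The difference lies in the verification of guessing. You propose a tree-shaped fusion producing a fat condition $q_0$ that reads $\name A$ and $\name C$ continuously, then locate $\delta\in S$ and a suitable splitting node $t$ of height $\delta$ inside $q_0$. The paper instead does something more elementary: it fixes a $\kappa$-approximating chain $\langle N_\eps\rangle_{\eps<\kappa}$ of elementary submodels, picks $\delta\in S\cap\acc(E)$ for the induced club $E$, and then builds a \emph{linear} increasing chain $\langle p_\eps:\eps\le\delta\rangle$ of conditions with strictly growing trunks, each $p_\eps$ chosen $<^*_\chi$-least so that $p_\eps\in N_{\eps+1}$; closure of splitting then guarantees that $\tr(p_\delta)$ has length $\delta$ and is a splitting node, and one extends by a single coordinate. (For part~(1) alone the paper uses only an $\omega$-step version of this, with no elementary submodels at all.) No fusion or continuous-reading machinery is invoked.

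Your route can be made to work, but two points need tightening. First, ``let $t=g\restriction\delta$'' conflates the generic branch with a ground-model object; you should instead fix a ground-model branch $b\in[q_0]$, observe that the heights of splitting nodes along $b$ form a club $C_b$ (by superperfectness plus closure of splitting), and choose $\delta\in S\cap D\cap C_b$ and $t=b\restriction\delta$. Second, continuous reading as you state it only has $t$ \emph{decide} whether $\delta$ is a limit point of $\name C$; you must build into the fusion that below each preserved splitting node a fresh element of $\name C$ is forced, so that limits of splitting levels are automatically forced into $\name C$. The paper's trunk-extension argument sidesteps both issues, since there is only a single branch under construction and club membership is arranged step by step. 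Your part~(3) matches the paper's derivation exactly.
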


The article \cite{Khomskii_higher_laver} by Khomskii et.\ el.\ focuses on higher Laver forcing.

 \subsection*{Organisation of the paper}
In Section \ref{S2} we review definitions.
 In Section \ref{S3} we proof \autoref{main_theorem_easy_case} and we show that diamond in the one-step-extension leads to \autoref{cor2}.
In Section \ref{S4} we proof \autoref{b14}.
 In Section \ref{S5} we prove \autoref{main_theorem_3}.
In Section \ref{S6} we introduce $\qbW$ and prove \autoref{l5} and related results about collapsing functions  for $\qbW$. 
\section{Background}\label{S2}

  Now we review the mentioned notions.

\subsection{Combinatorics and Properness}\label{S2.1}

 \begin{definition}
   	Let $\kappa$ be a cardinal.
   	For a regular cardinal $\mu < \kappa$, we let $\kappa \cap \cof(\mu)$  stand for
   	$\{\alpha \in \kappa : \cf(\alpha) = \mu\}$. 
   \end{definition}
   
\begin{definition}
Let $\kappa$ be a cardinal of uncountable cofinality and let $S$ be a stationary subset of $\kappa$. We let $\diamondsuit_\kappa(S)$ be the following statement: There is a sequence $\seq{d_\delta}{\delta \in S}$ such that $d_\delta \in {}^\delta 2$ and such that for any $x\in {}^\kappa 2$ the set $\{\delta \in S: d_\delta = x \rest \delta\}$ is stationary. For $\diamondsuit_\kappa(\kappa)$ we write just $\diamondsuit_\kappa$.
\end{definition} 

We recall a weakening of the diamond, called $\Dl$.

\begin{definition}[See \cite{Sh:107,Sh:589,Sh:829}]\label{Dl} For a regular uncountable $\kappa$ we let $(\Dl)_\kappa(S)$ mean the following: There is a sequence $\bar{\cD} = \la \cD_\delta \such \delta \in S\ra$ such that
    $\cD_\delta \subseteq {}^\delta\delta$ is of cardinality $< \kappa$ and for every $x \in {}^\kappa \kappa$ there are stationarily many
    $\delta \in S$ such that $x \rest \delta \in \cD_\delta$. For $(\Dl)_\kappa(\kappa)$ we write $(\Dl)_\kappa$.
\end{definition}  

Inaccessibility implies $(\Dl)_{\kappa}$.

\begin{definition}\label{weakly_Mahlo} An uncountable limit cardinal $\kappa$ is called \emph{weakly Mahlo} if $\kappa$ is a regular limit cardinal (i.e., $\kappa$ is weakly inaccessible) and the set of regular limit cardinals below $\kappa$ is stationary in $\kappa$.
\end{definition}

\begin{definition}\label{kappaproper} Let $\mathcal H(\theta) = (H(\theta), \in , <_\theta)$, and $N \prec {\mathcal H}(\theta)$ and $\bQ\in N$, $p \in \bQ \cap N$. A condition 
$ q$ is called $(N, \bQ)$-generic above $p$ if $q \geq p$ and for any dense subset $D$ of $\bQ$, if $D \in N$, then 
$q \Vdash \name{\bG} \cap D \cap N \neq \emptyset$.

Let $\kappa^{<\kappa} = \kappa$. A notion of forcing $\bQ$ is called $\kappa$-proper if for any sufficiently large $\theta$ there is a club (in $[H(\theta)]^{\kappa}$) of $N \prec H(\chi)$, $|N|= \kappa$ with ${}^{<\kappa}N \subseteq N $ such that: If $\kappa, p, \bQ  \in N$, and
        $p \in \bQ\cap N$, then  there is a stronger $(N,\bQ)$ generic condition $q$.
\end{definition}

\subsection{Notation for Tree Forcing}\label{S2.2}

 \begin{definition}\label{1.1} Let $\kappa$ be an infinite cardinal.
  \begin{enumerate}
  \item[(1)] We write  ${}^{\kappa>}\kappa=\{t \colon \alpha \to \kappa : \alpha < \kappa\}$. If $s,t \in {}^{\kappa>}\kappa$ we call $s$ an inital segment of $t$ if $t \rest \dom(s) = s$. 
    A \emph{tree (on $\kappa$)} is a non-empty subset of ${}^{\kappa>}\kappa$ that is closed under initial segments. We use the symbol $\trianglelefteq$ for the initial segment
relation and the symbol $\triangleleft$ for the corresponding strict relation.

\item[(2)]
  Let $T \subseteq {}^{\kappa>} \kappa$ be a tree and $s \in T$.
  We let
  \[T^{\la s \ra} = \{ t \in T \such t \trianglelefteq s \vee s \trianglelefteq t\}.
  \]
\item[(3)]
   The elements of a tree are called nodes.
  A node that has at least two immediate $\triangleleft$-successors
  in $p$ is called a splitting node of $p$. The set of splitting
  nodes of $p$ is denoted by $\Split(p)$.
\item[(4)]
  Let $T \subseteq {}^{\kappa>} \kappa$ be a tree that contains
  a splitting node. We let the trunk of $T$, $\tr(T)$, be the $\trianglelefteq$-least splitting node of $T$. 
  \end{enumerate}
\end{definition}
  
\begin{definition}[Kanamori's Higher Sacks Forcing, \cite{Kanamori-higher-sacks}]\label{qb} Let $\kappa$ be a regular cardinal
  such that $\kappa^{<\kappa} = \kappa$.
  Conditions in the forcing order $\qb$ are trees $p \subseteq {}^{\kappa>}2$ with the following additional properties:
  \begin{enumerate}
  \item[(1)] (Perfectness) For any $s \in p$ there is an extension $t \trianglerighteq s$ in $p$
such that $t$ has two immediate successors.
\item[(2)] (Closure of splitting)
For each increasing sequence of
length $< \kappa$ of splitting
nodes, the union of the nodes on the sequence is a splitting node of $p$ as well. 
\end{enumerate}
A condition $q$ is stronger than $p$ if $q \subseteq p$. 
\end{definition}
The forcing $\qb$ has a dense subset with
 the following closure property:
  For every  increasing sequence $\la t_i \such i < \lambda\ra$ of length $\lambda< \kappa$ of nodes $t_i \in p \in \qb$ we have that the
  limit of the sequence $\bigcup \{t_i \such i < \lambda\}$ is also a node in $p$.

Club Silver forcing is called $R(1,\kappa)$ in \cite[Theorem 6.7]{baumg:a-d}.

\begin{definition}[Club Silver forcing] \label{qc} Let $\kappa$ be a regular cardinal
  such that $\kappa^{<\kappa} = \kappa$.
  Conditions in the forcing order $\qc$ are partial functions $f\colon \dom(f) \to 2$ where $\dom(f)$ is a non-stationary subset of $\kappa$.

Stronger conditions are extensions of the function $f$.
\end{definition}  
Equivalently one can see a Silver condition $f$ as a set of nodes of a higher Silver tree $T_f = \{ t \in {}^{\kappa>} \kappa : t \rest \dom(f) = f \rest \dom(t)\}$.
We can restrict $\qc$ to the dense set of conditions $f$ for which \mbox{$\kappa \setminus \dom(f)$} is a club. For these $T_f$, the limit of any increasing sequence of splitting nodes is a splitting node. This shows that \autoref{main_theorem_easy_case} and \autoref{b14} work also for Silver forcing.

\begin{definition}[Club Miller Forcing/Club Laver Forcing] \label{qa} Let $\kappa$ be a regular cardinal
  such that $\kappa^{<\kappa} = \kappa$.
  Conditions in the forcing order $\qa$ are trees $p \subseteq {}^{\kappa>}\kappa$ with the following additional properties:
  \begin{enumerate}
  \item[(1)] (Club filter superperfectness) For any $s \in p$ there is an extension $t \trianglerighteq s$ in $p$
such that
\[
\osucc_p(t) := \{\alpha \in \kappa
\such t \concat \la \alpha \ra \in p \} \mbox{ contains a club in } \kappa.
\]
We require that each node
has either only one direct successor or splits into a club.

\item[(2)] (Closure of splitting) For each increasing sequence of
length $< \kappa$ of splitting
nodes, the union of the nodes on the sequence is a splitting node of $p$ as well.
\end{enumerate}
A condition $q$ is stronger than $p$ if $q \subseteq p$.

In the case of $\qd$ we strengthen (1) to the following: there is a node $s \in p$, called the trunk of $p$ and denoted as $\tr(p)$, and for any $t \in p$ with $s \subseteq t$ the set $\osucc_p(t)$ contains a club in $\kappa$.    
\end{definition}

We remark that club Miller forcing $\qa$ is well-known: Clauses (1) and (2) imply that any $p \in \qa$ has for any $s \in p$ and any height $\alpha$ some $t \in p$ with domain $\alpha$. Again, the forcing $\qa$ has a dense subset with
 the following closure property:
  For every  increasing sequence $\la t_i \such i < \lambda\ra$ of length $\lambda< \kappa$ of nodes $t_i \in p \in \qa$ we have that the
  limit of the sequence $\bigcup \{t_i \such i < \lambda\}$ is also a node in $p$.
These clauses are sometimes added to the definition, see e.g., Brendle, Brooke-Taylor, Friedman, Montoya \cite[Def.~74]{BrendleBrooke-TaylorFriedmanMontoya}, where the forcing is called  ${\mathbb{MI}}_{\kappa}^{\rm{Clubfilter}}$. 
 Friedman and Zdomskyy \cite{friedman-zdomskyy} add the requirement that the successor set of a limit splitting node is a subset of the intersection of the $\triangleleft$-preceding splitting nodes. The set of these conditions is dense in   ${\mathbb{MI}}_{\kappa}^{\rm{Clubfilter}}$. The recent article
\cite{Khomskii_higher_laver} is concerned with higher Laver forcing. 

\begin{definition}
Let $\kappa, \mu$ be cardinals.
Let $p \subseteq {}^{\kappa>} \mu$ be a tree, i.e., closed downwards.
We let $[p] = \{b \in {}^\kappa \mu : \forall \alpha \in \kappa, b \rest \alpha \in p\}$. The set$[p]$ is called the rump, body or set of $\kappa$-branches of $p$. Note that $p \mapsto [p]$ is not an absolute function.
\end{definition}

 Note that $p \mapsto [p]$ is not an absolute function. Since forcing conditions are perfect trees, in the generic extension there are new branches.

\subsection{Review of $I[\kappa]$}\label{S2.3}
 
We review the approachability ideal $I[\kappa]$ and its variant $\checki[\kappa]$ (from \cite[Definition 6, page 360, page 377]{Sh:108})   that is suitable also for the description of regular limit cardinals $\kappa$. Our review focuses on results that we use in Section~\ref{S6}.

 \begin{definition}[The Approachability Ideal on Successors \cite{Sh:88a}]\label{approachability_ideal}
 Let \\$\bar{a}=\seq{a_\alpha}{\alpha< \kappa}$ enumerate  a subset of $\kappa^{<\kappa}$.
  The ideal $I[\kappa](\bar{a})$ is be the set of
$ S \subseteq \kappa$  such that for a club $C \subseteq \kappa$ for any
 $\delta \in S \cap C$, there is a set  $A_\delta \subseteq \delta$ that is cofinal in $\delta$  with   $\ot(A_\delta) = \cf(\delta)<\delta$ and satisfies $\{A_\delta \cap \beta : \beta < \delta\} \subseteq \{a_\alpha: \alpha < \delta\}$. 
 The \emph{approachability ideal} $I[\kappa]$ is the union of all the $I[\lambda](\bar{a})$, $\bar{a}$ as above. If $\kappa^{<\kappa} = \kappa$, we let $\seq{a_\alpha}{\alpha< \kappa}$ be an enumeration of $\kappa^{<\kappa}$ and set
 $I[\kappa] = I[\kappa](\bar{a})$.
 \end{definition}
 
\begin{remark} Equivalently we can  require in addition that the $A_\delta$ be closed.
 The reason is, that we can choose $\bar{a}$ so that if there is a sequence of unbounded witnesses $\seq{A_\delta}{\delta \in S}$ for $S \in I[\kappa](\bar{a})$ then there is also a sequence of club witnesses $\seq{C_\delta}{\delta \in S}$ for $S \in I[\kappa](\bar{b})$ for a slightly richer sequence $\bar{b} \in {}^\kappa ([\kappa]^{<\kappa})$. For a detailed proof we refer to \cite[Lemma 4.4]{Sh:351}.
 \end{remark}

Most of the literature on $I[\kappa]$ in \cite{Sh:88a}, \cite{Sh:108}, \cite{Eisworth_handbook} focusses on the case of $\kappa$ being a successor cardinal. For a successor cardinal $\kappa$, the regular cardinals below $\kappa$ form a non-stationary set, and dropping the clause $\ot(C_\delta) = \cf(\delta) < \delta$
\autoref{approachability_ideal} yields an equivalent notion. We work with a version that dispenses with $\cf(\delta) < \delta$.

 \begin{definition}[From \cite{Sh:88a}, {\cite[Definition 6 and page 377]{Sh:108}}, \cite{Sh:351}]\label{approachability_ideal_original}
 Let \\$\bar{a}=\seq{a_\alpha}{\alpha< \kappa}$ enumerate  a subset of $\kappa^{<\kappa}$.
  The ideal $\checki[\kappa](\bar{a})$ is be the set of
$ S \subseteq \kappa$  such that for a club $C \subseteq \kappa$ for any
 $\delta \in S \cap C$, there is a club  $C_\delta \subseteq \delta$ that is cofinal in $\delta$  with   $\ot(C_\delta) = \cf(\delta)$ and satisfies $\{C_\delta \cap \beta : \beta < \delta\} \subseteq \{a_\alpha: \alpha < \delta\}$. 
 The \emph{approachability ideal} $\checki[\kappa]$ is the union of all the $\checki[\lambda](\bar{a})$, $\bar{a}$ as above. If $\kappa^{<\kappa} = \kappa$, we let $\seq{a_\alpha}{\alpha< \kappa}$ be an enumeration of $\kappa^{<\kappa}$ and have
 $\checki[\kappa] = \checki[\kappa](\bar{a})$.
 \end{definition}

 Note, that $\{\delta < \kappa : \delta \mbox{ is regular}\} \in \checki[\kappa]$.
 We just take $\bar{a} = \{\alpha : \alpha \in \kappa\}$ and for regular $\delta < \kappa$, $C_\delta = \delta$.
 
Many authors call $\checki[\kappa]$ now  $I[\kappa]$, see e.g. \cite{Mitchell_Ilambda}, \cite{eightfold}, \cite{Krueger_2019}.
 Many equivalent definitions of the ideal are given in \cite{Sh:351} and \cite{Eisworth_handbook}.

\begin{definition} \label{kappaapproxseq} Let $\kappa$ be a regular cardinal. 
A $\kappa$-approximating sequence 
$\mathfrak M= \seq{M_i}{i < \kappa}$ is a continuously increasing sequence of $M_i \prec (H(\chi), \in , <^*_\chi, \kappa, \dots )$ for some regular cardinal $\chi > 2^{2^\kappa}$ with $\seq{M_i}{i<j} \in M_{j+1}$ and 
$|M_i| < \kappa$ and a countable signature.   We define $S[\mathfrak M]$ to be the set of $\delta < \kappa$ such that $\delta$, $M_\delta \cap \kappa = \delta$ and there is a cofinal subset $A$ of $ \delta$ of order-type $\cf(\delta)$ with the property that every proper initial segment of $A$ is in $M_\delta$. \end{definition}

The set $S(\mathfrak M)$ is called $S_2(\mathfrak M)$ in \cite[Definition 6]{Sh:108}.

\begin{theorem}[Shelah, Eisworth {\cite[Theorem 3.6]{Eisworth_handbook}}] $S \in \checki[\kappa]$ iff there is a club $E $ in $\kappa$, such that $S \cap E \subseteq S(\mathfrak M)$ for a $\kappa$-approximating sequence $\mathfrak M$.
\end{theorem}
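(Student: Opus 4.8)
The plan is to prove both directions of the characterization of $\checki[\kappa]$ membership via $\kappa$-approximating sequences, using the fact that the set $S(\mathfrak M)$ defined by any such sequence canonically carries the closed cofinal witnesses demanded by \autoref{approachability_ideal_original}. Recall that we are identifying $\checki[\kappa]$ with $S(\mathfrak M) = S_2(\mathfrak M)$ as in \cite[Definition 6]{Sh:108}.

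\medskip

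For the right-to-left direction, suppose $\mathfrak M = \seq{M_i}{i<\kappa}$ is a $\kappa$-approximating sequence with $S \cap E \subseteq S(\mathfrak M)$ for some club $E$. First I would fix the enumerating sequence $\bar a = \seq{a_\alpha}{\alpha<\kappa}$ of $\kappa^{<\kappa}$ witnessing $\checki[\kappa]$, chosen inside the parameters of the $M_i$, and note that the set $C = \{\delta < \kappa : M_\delta \cap \kappa = \delta\}$ is club by continuity of the chain together with $|M_i|<\kappa$ and a standard closing-off argument. For $\delta \in S \cap E \cap C$, by definition of $S(\mathfrak M)$ there is a cofinal $A \subseteq \delta$ of order type $\cf(\delta)$ with every proper initial segment of $A$ lying in $M_\delta$. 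Taking the closure $C_\delta$ of $A$ in $\delta$ preserves the order type (since $A$ is cofinal of order type $\cf(\delta)$, its closure below $\delta$ differs only by added limit points and remains of the same order type once one thins to a club, as in the remark following \autoref{approachability_ideal}). The key point is that each proper initial segment $C_\delta \cap \beta$ for $\beta < \delta$ is coded by some $a_\alpha$: since the initial segments of $A$ live in $M_\delta$ and $M_\delta \cap \kappa = \delta$, each such bounded initial segment appears as $a_\alpha$ for some $\alpha < \delta$. Hence $\{C_\delta \cap \beta : \beta<\delta\} \subseteq \{a_\alpha : \alpha<\delta\}$, so $S \cap E \cap C \in \checki[\kappa](\bar a)$, and since $\checki[\kappa]$ is an ideal containing non-stationary sets and $S$ differs from this set off a club, $S \in \checki[\kappa]$.

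\medskip

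For the left-to-right direction, assume $S \in \checki[\kappa](\bar a)$, witnessed by a club $C$ and closed cofinal sets $C_\delta$ with $\ot(C_\delta) = \cf(\delta)$ and $\{C_\delta \cap \beta : \beta<\delta\}\subseteq\{a_\alpha : \alpha<\delta\}$. The plan is to build the approximating sequence $\mathfrak M$ so that it absorbs $\bar a$: I would construct $\seq{M_i}{i<\kappa}$ continuously increasing, each $M_i \prec (H(\chi),\in,<^*_\chi,\kappa,\bar a,\dots)$ of size $<\kappa$ with $\seq{M_j}{j<i}\in M_{i+1}$ and having $\bar a$ and $C$ as elements; this is possible because $\kappa^{<\kappa}=\kappa$ guarantees enough small elementary submodels and ${}^{<\kappa}$-closure can be arranged along the chain. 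Let $E = \{\delta : M_\delta \cap \kappa = \delta\}$, again club. For $\delta \in S \cap C \cap E$, the sets $C_\delta \cap \beta$ are among $\{a_\alpha : \alpha<\delta\}$, and since $\bar a \in M_\delta$ and $M_\delta \cap \kappa = \delta$, each $a_\alpha$ with $\alpha<\delta$ belongs to $M_\delta$; thus every proper initial segment of $C_\delta$ lies in $M_\delta$. Taking $A = C_\delta$, which is cofinal in $\delta$ of order type $\cf(\delta)$ with all proper initial segments in $M_\delta$, shows $\delta \in S(\mathfrak M)$. Therefore $S \cap C \cap E \subseteq S(\mathfrak M)$, giving the desired club.

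\medskip

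The main obstacle I anticipate is the care needed in matching the two slightly different normal forms of the witnesses: \autoref{approachability_ideal_original} asks for club (closed) witnesses $C_\delta$ with the explicit order-type condition $\ot(C_\delta)=\cf(\delta)$ and initial segments enumerated by $\bar a$, whereas $S(\mathfrak M)$ asks merely for a cofinal $A$ of order type $\cf(\delta)$ with proper initial segments in $M_\delta$. Passing from the model-theoretic "membership in $M_\delta$" formulation to the combinatorial "enumerated by some $a_\alpha$ with $\alpha<\delta$" formulation relies crucially on the identity $M_\delta \cap \kappa = \delta$ together with $\bar a \in M_\delta$, and the converse passage relies on building the models to contain $\bar a$. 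The closure-versus-cofinality subtlety is precisely what the remark after \autoref{approachability_ideal} addresses, so I would cite \cite[Lemma 4.4]{Sh:351} and \cite[Theorem 3.6]{Eisworth_handbook} to handle the interchangeability of closed and arbitrary cofinal witnesses without reproving it.
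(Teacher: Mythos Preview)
The paper does not supply its own proof of this statement: it is stated in \autoref{S2.3} as a cited result from \cite[Theorem 3.6]{Eisworth_handbook} and is used as background, so there is no in-paper argument to compare against.

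Your argument is essentially the standard one and the left-to-right direction is fine. In the right-to-left direction, however, there is a genuine slip: you write that the enumerating sequence $\bar a$ should be ``chosen inside the parameters of the $M_i$'', but in this direction the sequence $\mathfrak M$ is \emph{given}, and you cannot retroactively insert $\bar a$ into its signature. The repair is to go the other way and \emph{define} $\bar a$ from $\mathfrak M$: for instance, using a pairing function on $\kappa$ and enumerations of each $M_i \cap [\kappa]^{<\kappa}$ (which has size $<\kappa$), arrange that whenever $\delta$ is closed under pairing and $M_\delta \cap \kappa = \delta$, every bounded subset of $\kappa$ lying in $M_\delta$ occurs as some $a_\alpha$ with $\alpha<\delta$. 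With this correction your argument goes through, and the closure-of-$A$ issue is indeed handled by the citation you give to \cite[Lemma 4.4]{Sh:351}.
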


\begin{definition} For a subset $E$ of $\kappa$, let $\acc(E) = E \cap \{\alpha< \kappa: \alpha = \sup(E \cap \alpha)\}$,
 and $\nacc(E) = E \setminus \acc(E)$.
 \end{definition}

\begin{theorem}[Shelah \cite{Sh:88a}, Eisworth {\cite[Theorem 3.7]{Eisworth_handbook}}]\label{club_nacc_charact.} Let $\kappa$ be a regular uncountable cardinal. For any $S \subseteq \kappa$ the following are equivalent:
\begin{enumerate}
\item[(1)] $S \in I[\kappa]$,
\item[(2)] There is a sequence $\seq{C_\alpha}{\alpha < \kappa}$ and a closed unbounded $E \subseteq \kappa$ such that for any $\alpha < \kappa$,
\begin{enumerate}
\item[(a)] $C_\alpha$ is a closed but not necessarily unbounded subset of $\alpha$, 
\item[(b)] for any $\beta \in \nacc(C_\alpha)$ we have $C_\beta = C_\alpha \cap \beta$, 
\item[(c)] if $\alpha \in E \cap S$ then $\alpha$ is singular and $C_\alpha$ is a closed unbounded subset of $\alpha$ of order-type $\cf(\alpha)$.
\end{enumerate}
\end{enumerate}
\end{theorem}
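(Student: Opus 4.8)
The plan is to prove the two implications separately, noting first that clause~(c), by requiring the relevant $\alpha$ to be singular, matches exactly the constraint $\ot(A_\delta)=\cf(\delta)<\delta$ built into \autoref{approachability_ideal}; so the coherent sequence in~(2) is tailored to the ``singular part'' of the approachability ideal, and the two characterizations can be expected to correspond term by term.

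\emph{Direction $(2)\Rightarrow(1)$.} Starting from a sequence $\seq{C_\alpha}{\alpha<\kappa}$ and a club $E$ as in~(2), I would manufacture a witness $\bar a$ for $S\in I[\kappa]$. The family to enumerate is $\{C_\gamma\cap\beta : \gamma,\beta<\kappa\}$; each such set is a bounded subset of $\kappa$ of size $<\kappa$, hence, after coding, an element of $\kappa^{<\kappa}$, and there are at most $\kappa$ of them. Fixing a G\"odel pairing, I would list this family as $\bar a=\seq{a_\xi}{\xi<\kappa}$ so that the set $C^\ast$ of those $\delta$ with $\{a_\xi:\xi<\delta\}=\{C_\gamma\cap\beta:\gamma,\beta<\delta\}$ is club. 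The crucial point is that for $\alpha\in S\cap E$ the set $A_\alpha:=C_\alpha$ is cofinal in $\alpha$ of order type $\cf(\alpha)<\alpha$ (here $\alpha$ is singular by~(c)), and each of its proper initial segments is listed early: given $\beta<\alpha$, set $\gamma'=\min(C_\alpha\setminus(\beta+1))$, which exists and is $<\alpha$ because $C_\alpha$ is unbounded in $\alpha$. Then $\sup(C_\alpha\cap\gamma')\le\beta<\gamma'$, so $\gamma'\in\nacc(C_\alpha)$, and clause~(b) gives $C_{\gamma'}=C_\alpha\cap\gamma'$, whence $C_\alpha\cap\beta=C_{\gamma'}\cap\beta$ with $\gamma',\beta<\alpha$. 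Thus for $\delta\in S\cap E\cap C^\ast$ the set $A_\delta=C_\delta$ witnesses approachability, so $S\cap E\in I[\kappa](\bar a)$. Since $S\setminus E$ is nonstationary, it too lies in $I[\kappa](\bar a)$, and as $I[\kappa](\bar a)$ is closed under unions of two sets we conclude $S=(S\cap E)\cup(S\setminus E)\in I[\kappa](\bar a)\s I[\kappa]$.

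\emph{Direction $(1)\Rightarrow(2)$.} Here I would fix a witness: an enumeration $\bar a$ of a subset of $\kappa^{<\kappa}$ and a club $C$ such that every $\delta\in S\cap C$ carries a cofinal $A_\delta\s\delta$ with $\ot(A_\delta)=\cf(\delta)<\delta$ and all proper initial segments among $\{a_\xi:\xi<\delta\}$; by the remark following \autoref{approachability_ideal} I may take each $A_\delta$ closed. I would then fix a $\kappa$-approximating sequence $\mathfrak M=\seq{M_i}{i<\kappa}$ with $\bar a$, $S$, $C$ and a choice function $\delta\mapsto A_\delta$ all in $M_0$, and let $E=\{\alpha:M_\alpha\cap\kappa=\alpha\}$, a club. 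The sequence $\seq{C_\alpha}{\alpha<\kappa}$ is then built by recursion on $\alpha$, maintaining that each $C_\alpha$ is closed and coheres at its non-accumulation points with the earlier values. For $\alpha\in S\cap E$ one uses elementarity of $M_\alpha$ to select an increasing chain $\seq{\beta_i}{i<\cf(\alpha)}$ cofinal in $\alpha$ with $C_{\beta_i}=\cl\{\beta_j:j<i\}$ at successor stages $i$, and sets $C_\alpha=\cl\{\beta_i:i<\cf(\alpha)\}$; this is closed and cofinal of order type $\cf(\alpha)$, and coheres at $\nacc(C_\alpha)$ by construction, giving clause~(c) (singularity being forced by $\cf(\delta)<\delta$). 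For the remaining $\alpha$ one extends minimally so as not to destroy coherence.

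The main obstacle is exactly this recursive construction in $(1)\Rightarrow(2)$: arranging the sequence to cohere at \emph{all} non-accumulation points while simultaneously pinning the order type to be precisely $\cf(\alpha)$, rather than something larger, at the points of $S$. This is the delicate step that exploits the hypothesis that the approaching sets have all their initial segments listed before $\delta$, so that the elementary submodel $M_\alpha$ ``sees'' enough of $A_\alpha$ and of the enumeration to choose the non-accumulation points of $C_\alpha$ compatibly with the already-constructed $C_\beta$ for $\beta<\alpha$. I expect this recursion to run in parallel with the proof of the cited characterization of $\checki[\kappa]$ via $S(\mathfrak M)$, adapted to impose the order-type bound $\cf(\delta)<\delta$ that distinguishes $I[\kappa]$ from $\checki[\kappa]$.
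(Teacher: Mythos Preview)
The paper does not give its own proof of this theorem; it is stated with attribution to Shelah and to Eisworth's handbook chapter and then used as a black box (in particular in $(\odot)_0$ of the proof of \autoref{l5}). So there is no in-paper argument to compare your proposal against.

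On the content of your proposal: the direction $(2)\Rightarrow(1)$ is correct and is the standard argument. The direction $(1)\Rightarrow(2)$, however, is not a proof but a description of what you hope will happen. The sentence ``for $\alpha\in S\cap E$ one uses elementarity of $M_\alpha$ to select an increasing chain $\seq{\beta_i}{i<\cf(\alpha)}$ cofinal in $\alpha$ with $C_{\beta_i}=\cl\{\beta_j:j<i\}$ at successor stages'' is where the gap lies: the values $C_{\beta}$ for $\beta<\alpha$ are already fixed by the time you reach stage $\alpha$, and nothing you have said about the construction at those earlier $\beta$ (``one extends minimally so as not to destroy coherence'') guarantees that there exist cofinally many $\beta<\alpha$ whose already-assigned $C_\beta$ happens to equal the particular closed set $\cl\{\beta_j:j<i\}$ you now want. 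The coherence requirement in clause~(b) is global, and a recursion that treats $\alpha\in S\cap E$ and $\alpha\notin S\cap E$ by separate ad hoc rules will not produce it. In the cited proof the $C_\alpha$ are defined \emph{uniformly} in $\alpha$ from the data $\bar a\rest\alpha$ (so that if $\beta\in\nacc(C_\alpha)$ then the same uniform rule applied at $\beta$ yields $C_\beta=C_\alpha\cap\beta$ automatically), rather than by looking back at the previously constructed $C_\beta$'s. You correctly identify this as the crux, but deferring to ``the proof of the cited characterization'' is not overcoming it.
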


\nothing{We let for a regular cardinal $\lambda < \kappa$, 
$E_\lambda^\kappa = \{\alpha \in \kappa: \cf(\alpha) = \lambda\}$. In other places this is called $\kappa \cap \cof(\lambda)$.
We review some statements on the extent of $\checki[\lambda]$. For our applications.

\begin{theorem}[Shelah,\cite{Sh:88a} Eisworth]\label{four_cardinals} Suppose $\lambda^+ < \sigma < \kappa$ for regular cardinals $\lambda<  \sigma< \kappa$. There is a set $S \subseteq E^\kappa_\lambda \in  I[\kappa]$ such that $S \cap \theta$ is stationary in $\theta$ for stationarily many $\theta$ in $E_\sigma^\kappa$. In particular $I[\kappa]$ contains a stationary subset of $E^\kappa_\lambda$.
\end{theorem}

Note that $\kappa^{<\kappa} = \kappa$ implies for $\kappa = \lambda^+$ that $\lambda$ is a singular strong limit.

 \begin{theorem}[Shelah \cite{Sh:88a}, Eisworth ] Suppose $\kappa= \lambda^+$ for a strong limit singular
 $\lambda$. If $\mu < \lambda$ is a regular cardinal, then $I[\kappa]$ contains a stationary subset of $\kappa \cap \cof(\mu)$.
\end{theorem}

We remark that the following theorem requires just a one cardinal gap between $\lambda$ and $\kappa$, however, it uses an hypothesis on cardinal exponentiation.

 \begin{theorem}[Shelah \cite{Sh:88a}, Eisworth {\cite[Theorem 3.6]{Eisworth_handbook}}]\label{successor_exponent}
 let $\lambda < \kappa$ be regular cardinals such that $\eps^{<\lambda} < \kappa$ for any $\eps < \kappa$. Then $E^{\kappa}_{\lambda} \in \checki[\kappa]$.
 \end{theorem}
 }

The following theorem shows that at any uncountable $\kappa = \kappa^{<\kappa}$ the premises of \autoref{l5} are fulfilled for suitable $\lambda$. 
 
 \begin{theorem}[Shelah \cite{Sh:88a}, \cite{Sh:351}]\label{successor_exponent_2}
 let $\lambda < \kappa$ be  cardinals  such that $\kappa$ is regular and  $\kappa^{<\lambda} \leq \kappa$. Then there is a stationary set
 $S \subseteq \kappa \cap \cof(\cf(\lambda))$ with $S \in \checki[\kappa]$.
 \end{theorem}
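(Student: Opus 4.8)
The plan is to reduce to the $\kappa$-approximating sequence characterization of $\checki[\kappa]$ and then produce, for each prescribed club, a single witnessing point of the right form by a reflection argument. Set $\mu = \cf(\lambda)$, so $\mu$ is a regular cardinal with $\mu \le \lambda < \kappa$. Since $\kappa^{<\lambda} \le \kappa$, the collection of bounded subsets of $\kappa$ of size $<\lambda$ has cardinality $\sum_{\gamma<\kappa}|\gamma|^{<\lambda} \le \kappa$, so I fix an enumeration $\bar a = \seq{a_\alpha}{\alpha<\kappa}$ of it. I then fix a $\kappa$-approximating sequence $\mathfrak M = \seq{M_i}{i<\kappa}$ with $\bar a \in M_0$ and let $E = \{\delta<\kappa : M_\delta \cap \kappa = \delta\}$, a club. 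By the Shelah--Eisworth characterization recalled above ($S \in \checki[\kappa]$ iff $S \cap E' \s S(\mathfrak M)$ for some club $E'$ and some approximating sequence), it suffices to show that $S(\mathfrak M) \cap \cof(\mu)$ is stationary: then $S := S(\mathfrak M) \cap \cof(\mu) \cap E$ is a stationary subset of $\kappa \cap \cof(\cf(\lambda))$ lying in $\checki[\kappa]$, as required.

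To prove stationarity I would fix an arbitrary club $C \s \kappa$ and build a continuous increasing chain $\seq{N_\zeta}{\zeta<\mu}$ of elementary submodels of $H(\theta)$ for large $\theta$, arranging $\{\mathfrak M, \bar a, C, \kappa\} \s N_0$, $|N_\zeta|<\kappa$, and $\seq{N_\eta}{\eta\le\zeta} \in N_{\zeta+1}$. Put $\delta_\zeta = \sup(N_\zeta \cap \kappa)$ and $\delta = \sup_{\zeta<\mu}\delta_\zeta$. Then $\seq{\delta_\zeta}{\zeta<\mu}$ is increasing, continuous, and cofinal in $\delta$, so $\cf(\delta) = \mu$; moreover $\delta$ is a limit of points of the clubs $C$ and $E$ (both in $N_0$), whence $\delta \in C \cap E$ and in particular $M_\delta \cap \kappa = \delta$. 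Let $A = \{\delta_\zeta : \zeta<\mu\}$, a cofinal subset of $\delta$ of order type $\mu = \cf(\delta)$. For $\zeta<\mu$ the proper initial segment $\{\delta_\eta : \eta<\zeta\}$ is a bounded subset of $\delta$ of size $|\zeta| < \mu \le \lambda$, hence some $a_\alpha$; as this set is computable from $\seq{N_\eta}{\eta\le\zeta} \in N_{\zeta+1}$, elementarity supplies an index $\alpha \in N_{\zeta+1}$ with $a_\alpha = \{\delta_\eta : \eta<\zeta\}$, so $\alpha \in N_{\zeta+1}\cap\kappa$ and thus $\alpha < \delta_{\zeta+1} \le \delta$. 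Since $\bar a \in M_0 \s M_\delta$ and $\alpha < \delta = M_\delta\cap\kappa$, we get $a_\alpha \in M_\delta$. Hence every proper initial segment of $A$ lies in $M_\delta$, witnessing $\delta \in S(\mathfrak M)$; as $\delta \in C \cap \cof(\mu)$ and $C$ was arbitrary, $S(\mathfrak M)\cap\cof(\mu)$ is stationary.

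The \textbf{main obstacle} is precisely the containment of the initial segments in $M_\delta$, which reduces to the index bound $\alpha < \delta$; this is the only place where $\kappa^{<\lambda}\le\kappa$ enters, guaranteeing that all small bounded sets fit into a single length-$\kappa$ list so that membership in $M_\delta$ is controlled by the ordinal size of the index alone. This is also where the restriction to \emph{stationarily many} (rather than all) $\delta$ of cofinality $\mu$ originates: for a fixed $\delta\in\cof(\mu)$ an arbitrarily chosen cofinal sequence may have an initial segment whose $\bar a$-index exceeds $\delta$ (this can genuinely occur when $|\gamma|^{<\lambda} = \kappa$ for unboundedly many $\gamma<\kappa$), so the reflecting chain $\seq{N_\zeta}{\zeta<\mu}$ is needed exactly to manufacture a cofinal sequence all of whose initial segments are enumerated early. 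A minor point to verify carefully is that each $N_\zeta \cap \kappa$ is an ordinal and $\delta_\zeta$ is strictly increasing, so that the chain is genuinely continuous with $\cf(\delta)=\mu$; this is routine from $\kappa$ regular and $|N_\zeta|<\kappa$.
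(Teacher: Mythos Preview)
Your argument is correct. The core idea---build a cofinal sequence $\seq{\delta_\zeta}{\zeta<\cf(\lambda)}$ in some limit $\delta$ of a given club, arranging at each step that the current initial segment is indexed below the next point---is exactly what the paper does as well. The difference is purely in packaging: the paper works directly with the $\bar a$-definition of $\checki[\kappa]$, enumerating ${}^{\lambda>}\kappa$ (with $\kappa$-fold repetition) and defining $S$ explicitly as the set of $\delta\in\kappa\cap\cof(\cf(\lambda))$ admitting a cofinal $\eta\in{}^{\cf(\lambda)}\delta$ whose restrictions all have $\bar a$-index below $\delta$; it then builds $\delta_i$ and $\eta_i=\seq{\delta_j}{j\le i}$ by hand, choosing $\delta_{i+1}\in C$ large enough that $\eta_i$ is some $a_k$ with $k\le\delta_{i+1}$. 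You instead route the argument through the $S(\mathfrak M)$ characterization and let a continuous $\mu$-chain of elementary submodels $N_\zeta$ do the bookkeeping automatically.

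Your route is slightly less elementary in that it invokes the Shelah--Eisworth characterization of $\checki[\kappa]$ via approximating sequences, whereas the paper's proof is self-contained from the definition; on the other hand, the elementary-submodel packaging makes the ``index is small'' step cleaner (elementarity gives $\alpha\in N_{\zeta+1}$ immediately, while the paper needs the $\kappa$-fold repetition of each $a_\alpha$ to find a small index). One small remark: you do not actually need $N_\zeta\cap\kappa$ to be an ordinal---only that $\alpha\in N_{\zeta+1}\cap\kappa$ implies $\alpha<\delta_{\zeta+1}$ and that $\seq{\delta_\zeta}{\zeta<\mu}$ is strictly increasing, both of which follow just from $N_\zeta\in N_{\zeta+1}$ and $|N_\zeta|<\kappa$.
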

 \begin{proof}
We let $\seq{a_\alpha}{\alpha < \kappa}$ enumerate ${}^{\lambda >}  \kappa$, such that each element appears $\kappa$ often.
We let 
\begin{equation*}\begin{split}
S = & \{\delta \in \kappa \cap \cof(\cf(\lambda)): (\exists \eta \in {}^{\cf(\lambda)} \delta)\\
& \bigl(\sup(\range(\eta)) = \delta \wedge (\forall i < \cf(\lambda))( \exists j < \delta)  \eta \rest i = a_j\bigr)\}.
\end{split}
\end{equation*}
By definition, $S \in \checki[\kappa](\bar{a})$. We show that $S$ is stationary.
Let $C \subseteq \kappa$ be a club. By induction on $i < \cf(\lambda)$ we choose $\eta_i \in {}^{i+1} \kappa$ and $\delta_i \in \kappa$ with the following properties for any $i < \cf(\lambda)$,
\begin{enumerate}
\item[(a)] $\delta _i \in C$
\item[(b)] for $i < j < \cf(\lambda)$, $\delta_i < \delta_j$,
\item[(c)] $\eta_i = \seq{\delta_j}{j\leq i}$,
\item[(d)] there is $k \leq \delta_{i+1}$ with $\eta_i = a_k$.
\end{enumerate}

$i = 0$: We let $\delta_0 \in C$. We let $\eta_0 = \{(0, \delta_0)\}$.

Successor step: $i = j+1$. We choose $\delta_i \in C \setminus (\delta_j+1)$ such that there is some $k \leq \delta_{j+1}$ with  $\eta_j = \seq{\delta_\ell}{\ell \leq j} = a_k$. Then we let $\eta_{i} = \eta_j \cup\{(i,\delta_i)\}$. 

Limit step $i< \cf(\lambda)$: We let $\delta_i = \sup\{\delta_j: j < i\}$ and  $\eta_i = \bigcup\{\eta_j : j < i\} \cup \{(i, \delta_i)\}$.
Then we pick $\delta_{i+1}$ such that for some $k \leq \delta_{i+1}$,
$\eta_i = a_k$.

Then $\eta = \bigcup\{\eta_i : i < \cf(\lambda)\}$ witnesses $\delta= \sup\{\delta_i : i < \cf(\lambda)\} \in S \cap C$.
\end{proof}

   \subsection{Forcing}
   Our notions of forcing are written in Israeli style: $p \leq q$ means that $q$ is stronger than $p$. We write $\bP \Vdash \varphi$ if any condition in $\bP$ forcing $\varphi$. Equivalently on can say the weakest condition of $\bP$ forces $\varphi$.

   \section{The Case of $\kappa$ being Weakly Mahlo}\label{S3}
   
  We consider regular limit cardinals $\kappa$ that are not necessarily strong limits.  For $\kappa$ being weakly Mahlo there is an two step derivation of a name of a diamond, which we present in this section.   
We show that diamond in the one-step-extension leads to \autoref{cor2}.

   \begin{definition}
   	Let $\delta$ be an ordinal of unbountable cofinality. Let $S \subseteq \delta$ be  stationary in $\delta$. 
   	\begin{enumerate}
   		\item[(1)]
   		The quantifier $\forall^{\rm club}\alpha \in S, \varphi(\alpha)$ says that
   		there is a club $C $ in $\delta$ such that $S_{\varphi}= \{\alpha \in S :\varphi(\alpha)\} \supseteq S \cap C$.
   		\item[(2)]
   		We define 
   		the quantifier $\exists^{\rm stat}\alpha \in S, \varphi(\alpha)$ as
   		$S_{\varphi}= \{\alpha \in S :\varphi(\alpha)\}$ is a stationary subset of $\delta$.
   \end{enumerate}\end{definition}

\begin{definition}\label{name_eta}  Let $G$ be a $\bP$-generic filter over $V$ and assume that $\bP$ is one of our named forcings. The following function $\eta \colon \kappa \to \kappa$ is called \emph{the generic branch}:
$\eta = \bigcup\{ \stem(p) : p \in G\}$. 
We let $\name{\eta}$ be name for $\eta$.
\end{definition}

We name a combinatorial principle $\boxplus_{\kappa, S}$.
This asserts that there are stationarily many $\delta \in S$ for which $\delta$ can be partitioned into $\delta$-many parts such that each of them is stationary in $\delta$,  via a partition that does not depend on $\delta$.
   
   \begin{definition}\label{boxplus_easy} 
   	Let $\kappa$ be a weakly Mahlo cardinal and let \\$S \subseteq \{\delta \in \kappa  \such \delta \mbox{ is a regular limit cardinal}\}$. 
   	\begin{enumerate}
   		\item[$\boxplus_{\kappa, S}$] is the following statement: 
   		There is a function $f \colon \kappa \to \kappa$ such that
   		for any $\alpha < \kappa$, $f(\alpha) < \min(\alpha,1)$ and
   		\begin{equation}\label{code3}
   			\begin{split}
   				& (\exists^{\rm stat} \delta \in S)
   				(\forall \beta < \delta)\\
   				&
   				 (S_{\delta,\beta} := \{\gamma \in  \delta : f(\gamma) = \beta\} \mbox{ is stationary in }\delta).
   			\end{split}
   		\end{equation}
   		\end{enumerate}
  
   \end{definition}

   Now the proof of \autoref{main_theorem_easy_case} consists of \autoref{boxplus1} and \autoref{f8}.
   
   \begin{lemma}\label{boxplus1} If $\kappa$ is weakly Mahlo and $S \subseteq \{\delta < \kappa : \delta \mbox{ is a regular limit}\\ \mbox{cardinal}\}$ then $\boxplus_{\kappa, S}$.
   \end{lemma}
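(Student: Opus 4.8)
The plan is to exhibit a single regressive $f\colon\kappa\to\kappa$ whose restriction to every weakly inaccessible $\delta$ lying in a fixed club already cuts $\delta$ into $\delta$ many stationary pieces, so that no reflection argument is needed at all. The uniform partition I would use is the \emph{cofinality} partition: for a regular $\mu<\delta$ the set $\delta\cap\cof(\mu)$ is stationary in $\delta$ whenever $\delta$ is regular and $\mu<\delta$, and this fact is completely $\delta$-independent. The only thing that must be arranged is that, at the witnessing $\delta$, the admissible cofinalities $\mu<\delta$ are re-indexed \emph{onto} all of $\delta$, so that every $\beta<\delta$ gets a stationary fiber.

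Concretely, I would first fix the increasing enumeration $\seq{\mu_\xi}{\xi<\kappa}$ of the regular cardinals below $\kappa$ and define, for a limit ordinal $\gamma$ with $\cf(\gamma)<\gamma$, the value $f(\gamma)=\xi$, where $\cf(\gamma)=\mu_\xi$; and $f(\gamma)=0$ otherwise. Since $\xi\le\mu_\xi=\cf(\gamma)<\gamma$ on every singular $\gamma$, this $f$ is regressive as demanded by $\boxplus_{\kappa,S}$. Let $C$ be the club of $\delta<\kappa$ closed under $\xi\mapsto\mu_\xi$; then for $\delta\in C$ one has $\{\mu_\xi:\xi<\delta\}=\{\mu<\delta:\mu\text{ regular}\}$. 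I would take $S\cap C$ as the stationary set of witnesses: it is stationary because $S$ is stationary and $C$ is club, and each $\delta\in S\cap C$ is a regular limit cardinal, hence an $\aleph$-fixed point with exactly $\delta$ many regular cardinals below it, so $\{\mu_\xi:\xi<\delta\}$ has order type $\delta$ and $\mu_\beta<\delta$ for every $\beta<\delta$. For such $\delta$ and any $\beta<\delta$ we then get $f^{-1}(\beta)\cap\delta\supseteq(\delta\cap\cof(\mu_\beta))\setminus\{\mu_\beta\}$, which is stationary in $\delta$ by the standard fact quoted above. Thus every $\beta<\delta$ yields a stationary $S_{\delta,\beta}$, which is exactly $\boxplus_{\kappa,S}$ with witnessing stationary set $S\cap C$.

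The step I expect to carry the weight is precisely making the partition \emph{uniform}, i.e.\ getting all $\delta$ many fibers stationary simultaneously through one global rule rather than splitting each $\delta$ separately. Splitting a single regular $\delta$ into $\delta$ stationary sets is Solovay's theorem, but a naive Solovay partition of $\kappa$ produces, at a given $\delta$, only cofinally many (not all) stationary fibers, and transporting it down would require stationary reflection, which is unavailable since a weakly Mahlo $\kappa$ need not be a strong limit. The cofinality partition sidesteps this obstacle because it is canonical and $\delta$-independent: the stationarity of $\delta\cap\cof(\mu)$ needs only that $\delta$ and $\mu<\delta$ are regular, and the regular-limit character of $\delta$ guarantees exactly $\delta$ admissible cofinalities, which the fixed enumeration indexes bijectively onto $\delta$. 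The residual verifications—that $C$ is club, that $f$ is regressive, that $\delta\in S\cap C$ is an $\aleph$-fixed point, and the stationarity of $\delta\cap\cof(\mu)$ in a regular $\delta$—are routine.
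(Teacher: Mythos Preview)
Your proof is correct and follows essentially the same approach as the paper: both define the regressive $f$ via cofinality, so that the fibers $S_{\delta,\beta}$ contain $\delta\cap\cof(\mu)$ for a suitable regular $\mu<\delta$. The paper indexes by $\cf(\gamma)=\aleph_{\beta+1}$ while you index by the $\beta$-th regular cardinal $\mu_\beta$; otherwise the arguments are the same. One small simplification: your auxiliary club $C$ is unnecessary, since every $\delta\in S$ is weakly inaccessible and hence already satisfies $\mu_\beta<\delta$ for all $\beta<\delta$ (the regular cardinals below $\delta$ have order type exactly $\delta$), so as in the paper the stronger conclusion $(\forall\delta\in S)(\forall\beta<\delta)\,S_{\delta,\beta}$ stationary holds.
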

   
   \begin{proof} We let
   \begin{equation}\label{the_f2}
   	f(\gamma) = \begin{cases} \beta, & \mbox{ if } \cf(\gamma) = \aleph_{\beta+1};\\
0, & \mbox{ else.}
\end{cases}		
   		\end{equation}  
   		and  Statement~\eqref{code3} holds in the slightly stronger form 
   \begin{equation*}\label{code3sharp}
   			\begin{split}
   				& (\forall \delta \in S)
   				(\forall \beta < \delta)
   				 ( \{\gamma < \delta : f(\gamma) = \beta\} \mbox{ is stationary in }\delta).
   			\end{split}
   		\end{equation*}		
   		
   \end{proof}
   
   \begin{definition}\label{acc} For $E \subseteq \kappa$ we write $\acc^+(E)= \{\alpha\in \kappa \such 
   	\alpha = \sup(E \cap \alpha)\}$ and $\acc(E) = E \cap \acc^+(E)$.
   \end{definition}
 \nothing{  \begin{definition} Let $\bP$ be forcing notion in which stronger condition are just
   	subsets, i.e., for $p, q \in \bP$, $q \geq p$ if $q \subseteq p$. We say an ascending (in strength, Jerusalem notation) sequence $\seq{p_\eps}{\eps \leq \delta}$ of conditions is \emph{continuous}, if at any limit ordinal $\alpha \leq \delta$ we have $p_\alpha = \bigcap\{p_\beta : \beta < \alpha\}$.
   \end{definition}

First we need to recall a definition.
\begin{definition}\label{approx_seq_over}
	Let $\tau$ be at most countable and $\theta > \kappa$ be a regular cardinal. A sequence $\bar{M}= \seq{M_i}{i < \kappa}$ is called \emph{a $\kappa$-approximating sequence over $x$ (for the signature $\tau$)} if 
	\begin{enumerate}
		\item[(1)]
		$x \in H(\theta)$, $\theta > \kappa$ sufficiently large,
		\item[(2)] for each $i < \kappa$, $M_i \prec (H(\theta), \in, <_\theta, \{x\}, (Z)_{Z \in \tau} )$ (where $<_\theta$ denotes a well order on $H(\theta)$),
		\item[(3)]  for any $i<\kappa$, $\seq{M_j}{j<i} \in M_{i+1}$,
		\item[(4)]  for any $i<\kappa$,
		$|M_i| < \kappa$,
		\item[(5)] for limit ordinals $i \leq \kappa$, $M_i = \bigcup \{M_\alpha: \alpha < i\}$. 
	\end{enumerate}
\end{definition}
By the L\"owenheim-Skolem theorem, for any regular $\kappa$ and any set $x$ and any countable $\tau$ there is a $\kappa$-approximating sequence over $x$.
}

We state the following lemma for Sacks forcing $\qb$. It  holds for any of the four types of tree forcings. For Miller forcing and for Laver forcing, we  work with one fixed partition of $\kappa$ into two stationary sets $T_0$, $T_1$. This partition is used to define the trunk lengthenings: For $j = 0,1$, $\name{\eta}(\eps) = j$ in Equation \eqref{diamondname3}, in Clause $(\ast)_3$(e), and in Equations \eqref{aste}, \eqref{arranged1} is replaced by $\name{\eta}(\eps) \in T_j$.

   \begin{lemma}\label{f8} Let $\kappa > \aleph_0$ and $S \subseteq \kappa$ be stationary. If $\boxplus_{\kappa, S}$ holds, then
   	$\qb \Vdash \diamondsuit_\kappa(S)$.
   \end{lemma}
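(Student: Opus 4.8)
The plan is to extract from $\boxplus_{\kappa,S}$ a single partition function $f\colon\kappa\to\kappa$ together with a stationary set $S'\subseteq S$ of ``good'' $\delta$ — namely those $\delta\in S$ for which every piece $S_{\delta,\beta}:=\{\gamma<\delta:f(\gamma)=\beta\}$, $\beta<\delta$, is stationary in $\delta$. I then use the generic branch $\name\eta\in{}^\kappa 2$ (see \autoref{name_eta}) to read off, coordinate by coordinate, a candidate diamond sequence. Concretely I would set, for $\delta\in S'$ and $\beta<\delta$,
\[
\name d_\delta(\beta)=j \iff \text{there is a club } C\s\delta \text{ with } \name\eta(\gamma)=j \text{ for all } \gamma\in S_{\delta,\beta}\cap C,
\]
with default value $0$ if no such $j$ exists, and $\name d_\delta$ arbitrary for $\delta\in S\setminus S'$. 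Since two clubs of $\delta$ meet, at most one $j$ can satisfy the condition, so $\name d_\delta\in{}^\delta 2$ is well defined in every extension. Using $f(\alpha)<\alpha$ (for $\alpha\geq 1$), note that for every $\gamma<\delta$ we have $f(\gamma)<\gamma<\delta$, so each level below $\delta$ lies in exactly one active piece $S_{\delta,f(\gamma)}$.

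The heart of the argument is the guessing lemma: given any condition $p$, any name $\name x$ for an element of ${}^\kappa 2$, and any name $\name C$ for a club in $\kappa$, I must produce $q\leq p$ and $\delta\in S$ with $q\fr \delta\in\name C$ and $q\fr \name d_\delta=\name x\rest\delta$. First I would build a continuous $\leq$-decreasing chain $\seq{p_\xi}{\xi<\kappa}$ together with a continuous increasing sequence of levels $\seq{\gamma_\xi}{\xi<\kappa}$, using $(<\kappa)$-closure of $\qb$ and the limit-closure of its dense subtree set (the closure property stated just after \autoref{qb}) to pass through limits. At each stage I extend the determined initial branch level by level: on reaching a fresh level $\gamma$ I strengthen to decide $\name x(\gamma)$, and — crucially — I steer the stem so that its value at level $\gamma$ is exactly the already-decided bit $\name x(f(\gamma))$ (legitimate since $f(\gamma)<\gamma$ is decided earlier). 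Between these coded levels I keep the tree above untouched, so each $p_\xi$ is a genuine Sacks condition whose part above the current stem is a perfect subtree of $p$. I also pick a point of $\name C$ in each interval $(\gamma_\xi,\gamma_{\xi+1})$, so that limit closure points are forced into $\name C$.

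Finally I would choose $\delta$. The set of closure points $\{\delta:\gamma_\delta=\delta\}$ is club, and on it $\delta$ is forced to be a limit of $\name C$-points, hence $\delta\in\name C$; intersecting with the stationary good set $S'$ gives such a $\delta$. Let $q=\bigcap_{\xi<\delta}p_\xi$, which is a condition with determined branch $\eta^\ast\rest\delta$ of length $\delta$ and a perfect subtree of $p$ above the node $\eta^\ast\rest\delta$. By construction $q\fr \name\eta\rest\delta=\eta^\ast\rest\delta$ and $q\fr\name x\rest\delta=\eta^\ast\circ f\rest\delta$ in the sense that $\eta^\ast(\gamma)=x(f(\gamma))$ for all $\gamma<\delta$; hence $\name\eta$ is \emph{constantly} $x(\beta)$ on all of $S_{\delta,\beta}\cap\delta$, forcing $\name d_\delta(\beta)=x(\beta)$ for every $\beta<\delta$ (here $\delta\in S'$ ensures every coordinate $\beta<\delta$ is genuinely active). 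Thus $q\fr \name d_\delta=\name x\rest\delta$, and a standard genericity argument upgrades the guessing lemma to $\qb\fr\diamondsuit_\kappa(S)$. I expect the main obstacle to be executing the fusion and the branch-coding \emph{simultaneously}: one must code $\name x\rest\delta$ into the branch along the partition while still deciding $\name x$ and catching $\name C$, and verify that pruning below $\delta$ (but never above the running stem) keeps each $p_\xi$, and their limit $q$, perfect and closed under splitting.
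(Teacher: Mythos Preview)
Your overall strategy and your diamond name match the paper's approach; the gap is the claim that you can arrange $\eta^\ast(\gamma)=x(f(\gamma))$ for \emph{every} $\gamma<\delta$. You cannot code level by level. First, deciding a bit of $\name{x}$ may force you to lengthen the trunk past many levels before you regain control, so those intermediate levels get fixed without reference to $x\circ f$. Second, and more fundamentally, a Sacks condition does not split at every node: you can steer the stem only at splitting nodes, and along the branch you are building the splitting heights form merely a club (by the closure-of-splitting clause in \autoref{qb}(2)), not all of $\delta$. Hence the assertion that $\name{\eta}$ is ``constantly $x(\beta)$ on all of $S_{\delta,\beta}$'' is unattainable.

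The repair is precisely what the paper does: code only on a club of heights. The paper organizes the bookkeeping with a $\kappa$-approximating sequence $\seq{N_\eps}{\eps<\kappa}$, sets $\kappa_\eps=N_\eps\cap\kappa$, and at each \emph{limit} stage $\eps$ (where the accumulated trunk is guaranteed to be a splitting node of height $\kappa_\eps$) steers via $\tr(p_\eps)(\kappa_\eps)=x_\eps(f(\kappa_\eps))$. Choosing $\delta\in S'\cap\acc(E)$, where $E=\{\alpha:N_\alpha\cap\kappa=\alpha\}$, ensures that $\{\eps<\delta:\kappa_\eps=\eps\}$ is a club in $\delta$. One then gets $\eta(\eps)=x(\beta)$ for \emph{club-many} $\eps\in S_{\delta,\beta}$, and this is exactly what the $\forall^{\rm club}$ quantifier in your definition of $\name{d}_\delta(\beta)$ needs. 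Your ``for all $\gamma<\delta$'' is too strong, but also unnecessary once you observe this.
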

   
   \begin{proof} 
We let $\boxplus_{\kappa, S}$ witnessed by $f$ and let for $\delta \in S$, 
$S_{\delta, \beta} = \{\eps < \delta : f(\eps) = \beta\}$. For stationarily many $\delta \in S$, for any $\beta<\delta$, $S_{\delta, \beta}$ a stationary subset of $\delta$. Let $S'$ be a stationary set of these good $\delta$.  We define the name $\langle \name{\nu}_\delta : \delta \in S \rangle$ for a sequence by letting for $\delta \in S'$, $\beta < \delta$, $j = 0,1$,
   		\begin{equation} \label{diamondname3}
   			\begin{split}
   			\qb \Vdash	& \mbox{``}\name{\nu}_\delta(\beta) = j \; \leftrightarrow \;   	(\forall^{\rm club} \eps \in S_{\delta, \beta}) (\name{\eta}(\eps) = j)\mbox{''}.
   		\end{split}\end{equation}
   For $\delta \in S \setminus S'$, we can let $\name{\nu}_\delta$ be a name for the zero sequence of length $\delta$.

   	\begin{equation*}
   		\qb \Vdash  \mbox{``}\seq{\name{\nu}_\delta}{\delta \in S} \mbox{ is a $\diamondsuit_\kappa(S)$-sequence.''} 
   	\end{equation*}
   	Towards this suppose that 
   	\begin{equation*}
   		p \Vdash \mbox{``}\name{\xname} \in {}^\kappa 2, \mbox{ and } \name{D} \mbox{ is a club subset of }\kappa.\mbox{''}
   	\end{equation*}

   	We show that there is some $q \geq p$ that forces $\delta \in \name{D}\cap S'$ and $\name{x} \rest \delta = \name{\nu}_\delta$.

We let $\chi = \beth_\omega(\kappa)$ and let $<^*_{\chi}$ be a well-ordering of $H(\chi)$. We choose a $\kappa$-approximating (see \autoref{kappaapproxseq}) sequence 
$\seq{N_\eps}{\eps < \kappa}$ with 
\begin{equation*}
{\bf c} = (\kappa, p, \name{\bar{\nu}}, \name{x}, \name{D}, S) \in N_0.
\end{equation*}
We let $E = \{\alpha < \kappa: N_\alpha \cap \kappa = \alpha \}$.
We pick any $\delta$ with 
\begin{equation*}
 \delta \in S' \cap \acc(E) 
\end{equation*}
For $\eps < \delta$, we let $\kappa_\eps = N_\eps \cap \kappa$.

By induction on $\eps \leq \delta$
we chose a condition $p_\eps$ such that for any $\eps$ the following holds
\begin{enumerate}\item[$(\circledast)$] $p_\eps$ is the $<^*_\chi$-least element such that 
\begin{enumerate} 
\item[(a)] $p_\eps \geq p$. (We shall prove that $p_\eps \in N_{\eps+1}$ in our construction but it is better  to not state it in our demands.)
\item [(b)] $p_\eps \geq p_\zeta$ for $\zeta<\eps$. 
\item[(c)] $\lg(\tr(p_\eps)) \geq \kappa_\eps$.
\item[(d)] $p_\eps$ forces values to $\name{\xname} \rest \kappa_\eps$, $\name{D} \cap \kappa_\eps$ and $\min (\name{D} \setminus \kappa_\eps)$ call them $\xname_\eps$, $e_\eps$, $\gamma_\eps$ respectively.
\item[(e)] 
For limit ordinals $\eps< \delta$, $\tr(p_\eps)(\kappa_\eps) = \xname_\eps(f(\kappa_\eps))$. 
\end{enumerate}
\end{enumerate}
We can carry the induction since $\bQ$ is $(<\kappa)$-complete and for clause (e) we recall $f(\kappa_\eps)< \kappa_\eps$. More fully, let us prove $(\circledast_\eps)$ by induction on $\eps$, 

\begin{enumerate}
\item[$\circledast_\eps$]
\begin{itemize}
\item[•] ${\bf m}_\eps = \seq{p_\zeta, \xname_\zeta, e_\zeta, \gamma_\zeta}{\zeta<\eps}$ exists and is unique and $\zeta<\eps$ implies ${\bf m}_{\zeta+1} \in N_{\zeta+1}$.

\item[•] ${\bf m}_\eps$ is defined in $(H(\chi), \in , <_\chi^*)$ by a formula $\varphi = \varphi(x, \bar{y})$ with $x $ for ${\bf m}_\eps$ and $\bar{y} = (y_0, y_1)$ with $y_0 = \bar{N} \rest \eps$ and $y_1 = {\bf c}$ from $(\circledast)_1$(e). 
\end{itemize}
\end{enumerate}
\nothing{Moreover, our inductive choice fulfils at any $\eps< \delta$,
\begin{equation}\tag*{\quad $(\ast)_4$}
{\bf m}_\eps \in N_{\eps+1}.
\end{equation}
}

{\bf Case 1} $\eps = 0$. 

${\bf m}_0 = \langle \rangle$  and $\lg(\bar{N} \rest 0) = 0$.

{\bf Case 2} $\eps = \zeta+1$.

  Now $p_\eps$ is the $<_\chi^*$ first element of $\bQ$ satisfying clauses $(\circledast)_3$(a) - (d). There is no requirement (e), since $\eps$ is a successor.  Clearly such a $p$ exists and hence one of them must be $<_\chi^*$-least. As each element of $H(\chi)$ mentioned above is computable from $\bar{N} \rest \eps$, it belongs to $N_\eps = N_{\zeta+1}$ since $\bar{N} \rest (\zeta+1) \in N_{\zeta+1}$ by $(\circledast)_1$(d)

{\bf Case 3} $\eps$ limit.\\
This is the only place at which we use the specific choice of $\bQ = \qb$ and not just $(<\kappa)$-complete forcings that force $\name{\eta} \not\in\bV$ but that the limit of splitting nodes is a splitting node.
By $(<\kappa)$-completeness $q = \bigcap_{\zeta<\eps} p_\zeta$ in $\qb$. Also for $\zeta < \eps$, $\lg(\tr(p_\zeta)) \geq \kappa_\zeta = N_\zeta \cap \kappa \in N_{\zeta+1}$. Hence $\lg(\tr(p_\zeta)) \in [\kappa_\zeta, \kappa_{\zeta+1})$  for $\zeta <\eps$. Also for 
\[
\forall \zeta\leq \xi <\eps (\tr(p_\xi) \in \Split(p_\zeta))
\] 
by induction hypothesis. Hence by the definition of the Sacks forcing \autoref{qb} clause number (2) we have
$\forall \zeta < \eps (\bigcup \{\tr(p_\xi) : \xi < \eps\} \in \Split(p_\zeta)$ 
and $\bigcup \{\tr(p_\xi) : \xi < \eps\} \in \Split(q)$.
We have $\lg(\tr(q)) = \kappa_\eps$, since $\bar{N}$ is continuous and $\kappa_\eps = N_\eps \cap \kappa$.
Moreover 
$q \Vdash x_{\kappa_\eps} = \name{x} \rest \kappa_\eps$.
We can compute $\tr(q)$, $\kappa_\eps$ and $f(\kappa_\eps)$ from $(\bar{N} \rest \eps, {\bf c})$.
Moreover 
\begin{equation}\label{gut1}q \Vdash x_{\kappa_\eps} = \name{\xname} \rest \kappa_\eps \wedge \kappa_\eps \in \name{D}.
\end{equation}
by the induction hypothesis $(\circledast)$(a) -- (d) and since $\eps$ is a limit.

Hence $\tr(q)$ has two immediate successors of length $\kappa_\eps +1$ and we can let $p_\eps \geq q$ and 
\begin{equation}\label{aste}
\tr(p_\eps)(\kappa_\eps) = \xname_{\kappa_\eps}(f(\kappa_\eps)).
\end{equation}

 This is clause $(\circledast)$(e) that we have to fulfil.

Now we carried the induction. We let $q = \bigcap_{\eps < \delta} p_\eps$.
We show that 
\begin{equation}\label{diam1}
q \Vdash \name{\xname} \rest \delta= \name{\nu}_\delta.
\end{equation}
Equation \eqref{gut1} implies at the limit $\delta$:  $q \Vdash \name{\xname} \rest \delta= \bigcup\{\xname_\eps: \eps < \delta\}$.

We fix $\beta < \delta$. We verify that  for club many $\eps$ in the stationary set $S_{\delta, \beta}$ we have
\begin{equation}\label{arranged1}
q \Vdash (\kappa_\eps = \eps \wedge f(\eps) = \beta) \rightarrow \eta(\eps) = \name{\xname}(\beta) = \xname_\eps(\beta).
\end{equation}
This follows from Equations~\eqref{gut1} and \eqref{aste} that we made true at club many $\kappa_\eps$, and thus as club many $\eps$, since $\eps  \mapsto \kappa_\eps$ is a continuously increasing function on $\eps < \delta$ and $\kappa_\delta = \delta$.

   \end{proof}
   

\medskip

   Now  we turn to \autoref{cor2}.\\
      We notice that \autoref{claim2.19} and \autoref{claim2.21} hold also for any of our forcings. They could be mixed along an iteration.   We call the first iterand $\bP_1$.
   
   \begin{lemma}\label{claim2.19} Let $\bP$ be a $\leq \kappa$-supported iteration of iterands of $\qb$. For proving \autoref{cor2}, it suffices to prove
  	$\bP_1 \Vdash \diamondsuit_\kappa$ and that the forcing $\bP_1$ does not collapse $\kappa^+$. 
  \end{lemma}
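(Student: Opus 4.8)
The plan is to factor the iteration after its first coordinate and to observe that, from the point of view of $V[G_1]$, the remainder is again one of Kanamori's iterations \emph{in a model where $\diamondsuit_\kappa$ holds}, so that \autoref{kanamori_theorem} applies to it verbatim; the only genuinely new work is then confined to $\bP_1$, which is exactly the content of the two properties to be verified, namely (a) $\bP_1 \Vdash \diamondsuit_\kappa$ and (b) $\bP_1$ does not collapse $\kappa^+$. Concretely, fix an arbitrary length $\gamma$ and write $\bP_\gamma = \bP_1 * \name{\bP}_{[1,\gamma)}$, where $\name{\bP}_{[1,\gamma)}$ is the canonical $\bP_1$-name for the tail of the iteration. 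Since every iterand is forced to equal $\qb$, in $V[G_1]$ the quotient $\name{\bP}_{[1,\gamma)}[G_1]$ is again a $(\leq\kappa)$-support iteration of higher Sacks forcing, just of length $\gamma$ reindexed to start at $0$.

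First I would check that the hypotheses of \autoref{kanamori_theorem} hold in $V[G_1]$. As $\bP_1 = \qb$ is $(<\kappa)$-complete, it adds no sequence of ordinals of length $<\kappa$, so ${}^{\alpha}\kappa$ is computed identically in $V$ and in $V[G_1]$ for every $\alpha<\kappa$; combined with the fact that $(<\kappa)$-completeness preserves $\kappa$ and with (b) preserving $\kappa^+$, this yields $(\kappa^{<\kappa})^{V[G_1]} = \kappa \geq \aleph_1$. By (a), $V[G_1] \models \diamondsuit_\kappa$. Hence \autoref{kanamori_theorem}, applied \emph{inside} $V[G_1]$ to $\name{\bP}_{[1,\gamma)}[G_1]$, shows that this tail does not collapse $\kappa^+$ and is $\kappa$-proper over $V[G_1]$.

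It then remains to transfer the two conclusions down to $\bP_\gamma$ over $V$. For non-collapse: by (b) we have $(\kappa^+)^{V[G_1]}=(\kappa^+)^V$, and the tail preserves $(\kappa^+)^{V[G_1]}$, so the composition $\bP_\gamma = \bP_1 * \name{\bP}_{[1,\gamma)}$ preserves $\kappa^+$ over $V$. For $\kappa$-properness I would invoke the $\kappa$-analogue of the two-step iteration theorem for properness: if $\bP_1$ is $\kappa$-proper and $\bP_1 \Vdash$ ``$\name{\bP}_{[1,\gamma)}$ is $\kappa$-proper'', then $\bP_1 * \name{\bP}_{[1,\gamma)}$ is $\kappa$-proper. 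The second premise is exactly what the previous paragraph established; the first is the $\gamma=1$ instance, i.e.\ the bare fusion lemma for a single copy of $\qb$, which needs only $\kappa^{<\kappa}=\kappa$: the closure-of-splitting clause (2) of \autoref{qb} makes the length-$\kappa$ fusion against the $\leq\kappa$ dense sets lying in a $\kappa$-approximating $N$ go through, and no diamond enters at a single coordinate. Since $\gamma$ was arbitrary, $\bP_\gamma$ satisfies both conclusions of \autoref{kanamori_theorem} with $\diamondsuit_\kappa$ deleted from the ground-model hypotheses, which is precisely \autoref{cor2}; thus establishing (a) and (b) indeed suffices.

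The main obstacle I anticipate is organizational rather than conceptual: one must verify that $\name{\bP}_{[1,\gamma)}[G_1]$ is, in $V[G_1]$, genuinely a $(\leq\kappa)$-support iteration whose successive iterands are (absolutely) $\qb$ as computed in the intermediate extensions, so that \autoref{kanamori_theorem} applies on the nose after reindexing; and one must have available the $\kappa$-version of the two-step properness-composition theorem together with the single-coordinate fusion yielding $\kappa$-properness of $\bP_1$. The ``mixing'' remark preceding the lemma is then handled automatically, since the identical factorization and the same two ingredients work when the iterands are club Silver, club Miller or club Laver forcing.
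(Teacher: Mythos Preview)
Your argument is correct and is essentially the paper's approach: factor $\bP_\gamma$ as $\bP_1 * \name{\bP}_{[1,\gamma)}$ and apply \autoref{kanamori_theorem} to the tail inside $V[G_1]$, where $\diamondsuit_\kappa$ holds by hypothesis (a) and $\kappa^{<\kappa}=\kappa$ persists by $(<\kappa)$-completeness together with (b). The paper's proof is much terser and additionally remarks that for successor $\kappa>\aleph_1$ the factorization is unnecessary because $\diamondsuit_\kappa$ already holds in $V$ by \cite{Sh:922}, but this is an aside rather than a different strategy; your more explicit treatment of the hypothesis verification and the two-step $\kappa$-properness composition fills in details the paper leaves implicit.
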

  
 \begin{proof} If $\kappa> \aleph_1$ is a successor cardinal, \cite{Sh:922} gives the diamond in $\bV$. Now let $\kappa$ be a regular limit cardinal. Let $\bG$ be $\bP_1$ generic over $\bV$. In $\bV[\bG]$ we apply \autoref{kanamori_theorem} to the $(\leq \kappa)$-support iteration
  	$\langle \bP_\alpha/\bG, \name{\bQ}_\beta/\bG : \alpha \in [1, \delta], \beta \in [1, \delta)\rangle$. 
  \end{proof}
  
For defining fusion sequences, we use a notion that is suitable for $p \subseteq {}^{\kappa>}{\kappa}$ and which could be simplified for $p \subseteq{}^{\kappa>} 2$.
  
   \begin{definition}\label{axiomA}
   We assume $\kappa = \kappa^{<\kappa}$. We conceive a forcing notion as a tree $p \subseteq \kappa^{<\kappa}$ or $\subseteq 2^{<\kappa}$. 
   	Recall, splitting means splitting into a club.  For $\alpha < \kappa$ we let 
   	\[\spl_\alpha(p) = \bigl\{ t \in \Split(p)
   	\such \ot(\{s \subsetneq t \such s \in \Split(p)\}) = \alpha\bigr\}
   	\] and with a fixed enumeration $\{\eta_\alpha : \alpha < \kappa\} $ of ${}^{\kappa> } \kappa$ we define
   	\[\cl_\alpha(p) := \{s \in p \such (\exists \gamma \leq \alpha)(\exists t \in \spl_\gamma(p))( s \subseteq t) \wedge
   	(\exists \beta \leq \alpha)( s = \eta_\beta)\}.
   	\]
   	We let  $p \leq_ \alpha q$
   	if  $p \leq q$ and $\cl_\alpha(p) = \cl_\alpha(q)$.
   \end{definition}
   
   \begin{lemma}\label{claim2.21} Under $\kappa^{<\kappa} = \kappa$, the forcing $\bP_1$ is $\kappa$-proper and does not collapse $\kappa^+$.
   \end{lemma}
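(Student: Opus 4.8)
Since $\bP_1$ is a single copy of $\qb$ (and, if one mixes the forcings along the iteration, a single copy of one of the four tree forcings), it suffices to show that $\qb$ is $\kappa$-proper and does not collapse $\kappa^+$ directly from $\kappa^{<\kappa}=\kappa$. Note that the hypothesis $\diamondsuit_\kappa$ of \autoref{kanamori_theorem} is needed only to handle the limit stages of the iteration; for a one-step forcing it plays no role. The plan is a standard Axiom~A fusion argument, together with the usual derivation of $\kappa^+$-preservation from $\kappa$-properness.

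First I would record that $\qb$ is $(<\kappa)$-closed: by \autoref{qb}(2) and the displayed closure property of the dense set of conditions, the intersection $\bigcap_{\zeta<\lambda}p_\zeta$ of any $\leq$-increasing sequence of length $\lambda<\kappa$ is again a condition. Consequently $\qb$ adds no new sequences of length $<\kappa$ and preserves all cardinals and cofinalities $\leq\kappa$; in particular $\kappa$ is preserved. Next I would set up fusion through the orderings $\leq_\alpha$ of \autoref{axiomA}. The fusion lemma states: if $\seq{p_\alpha}{\alpha<\kappa}$ is $\leq$-increasing, continuous at limits, and satisfies $p_\alpha\leq_\alpha p_{\alpha+1}$ for all $\alpha$, then $q:=\bigcap_{\alpha<\kappa}p_\alpha$ is a condition with $q\leq_\alpha p_\alpha$ for every $\alpha$. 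Indeed $\cl_\alpha$ is frozen from stage $\alpha$ onward, so the $\alpha$-th splitting level of $q$ agrees with that of $p_\alpha$; perfectness of $q$ and closure of its splitting follow from \autoref{qb}(1),(2) and $(<\kappa)$-closure exactly as in Case~3 of the proof of \autoref{f8}.

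For $\kappa$-properness, fix a large $\theta$ and $N\prec H(\theta)$ with $|N|=\kappa$, ${}^{<\kappa}N\s N$, $\kappa\s N$, and $\kappa,\qb,p\in N$. Since $|N|=\kappa$, I can enumerate inside $N$ all dense open $D\s\qb$ with $D\in N$ as $\seq{D_\alpha}{\alpha<\kappa}$. I would then build a fusion sequence $\seq{p_\alpha}{\alpha<\kappa}$ with $p_0=p$, each $p_\alpha\in N$ and $p_\alpha\leq_\alpha p_{\alpha+1}$, arranged so that for every node $t$ of the $\alpha$-th splitting level $\spl_\alpha(p_\alpha)$ the part $p_{\alpha+1}^{\la t\ra}$ of $p_{\alpha+1}$ above $t$ is stronger than some $r_t\in D_\alpha\cap N$. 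At stage $\alpha$ there are at most $2^{<\kappa}=\kappa$ such nodes $t$; above each immediate successor of each $t$ I use density of $D_\alpha$ and $(<\kappa)$-closure to find such an $r_t$, then amalgamate these $\leq\kappa$-many pieces into a single $p_{\alpha+1}\leq_\alpha p_\alpha$, taking intersections at limit stages. Because $D_\alpha,p_\alpha\in N$, $\kappa\s N$ and ${}^{<\kappa}N\s N$, each stage is computed in $N$, so $p_{\alpha+1}\in N$. The fusion $q:=\bigcap_\alpha p_\alpha$ is a condition by the previous paragraph, and $\spl_\alpha(q)=\spl_\alpha(p_{\alpha+1})$ for each $\alpha$. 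Given any $s\leq q$, extend into $s$ to a node $w$ lying above the $\alpha$-th splitting node $t\in\spl_\alpha(q)$ on its branch; then $s^{\la w\ra}\s q^{\la t\ra}\s r_t$, so $s$ is compatible with $r_t\in D_\alpha\cap N$. Hence $D_\alpha\cap N$ is predense below $q$, which makes $q$ an $(N,\qb)$-generic condition above $p$. Thus $\qb$ is $\kappa$-proper.

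Finally, that $\qb$ does not collapse $\kappa^+$ follows from $\kappa$-properness in the usual way. Suppose toward a contradiction that $p\Vdash$ ``$\name f\colon\kappa\to\kappa^+$ is onto''. Choose $N$ as above with $\name f\in N$, set $\delta:=\sup(N\cap\kappa^+)<\kappa^+$, and let $q\geq p$ be $(N,\qb)$-generic. For each $\alpha<\kappa$ the set $\{r:r\text{ decides }\name f(\alpha)\}$ is dense and lies in $N$, so genericity forces $\name f(\alpha)\in N\cap\kappa^+\s\delta$; hence $q\Vdash\range(\name f)\s\delta<\kappa^+$, contradicting surjectivity. The main obstacle is the amalgamation at successor stages: one must verify that gluing the $\leq\kappa$-many $r_t$ back together yields a genuine perfect tree with closed splitting satisfying $p_{\alpha+1}\leq_\alpha p_\alpha$ and lying in $N$. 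This is exactly where $\kappa^{<\kappa}=\kappa$ is used, both to bound the number of level-$\alpha$ splitting nodes by $\kappa$ and, through ${}^{<\kappa}N\s N$, to keep the construction inside $N$.
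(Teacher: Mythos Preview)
Your proof is correct and follows essentially the same fusion/Axiom~A strategy as the paper. Two minor differences worth noting: the paper works with $\cl_\eps(p_\eps)\cap\spl_\eps(p_\eps)$ (at most $|\eps|+1<\kappa$ nodes) rather than all of $\spl_\alpha(p_\alpha)$, a bookkeeping device tailored to the Miller/Laver versions where a splitting node has club-many successors; and the paper reads off $\kappa^+$-preservation directly from the same fusion by bounding the possible values of $\name\tau\rest(\eps+1)$ in a set $K_\eps$ of size $\leq\kappa$, whereas you first establish $\kappa$-properness and then derive $\kappa^+$-preservation abstractly. One small slip: you want $p_{\alpha+1}^{\la t\concat\la i\ra\ra}\leq r_{t,i}\in D_\alpha\cap N$ for each immediate successor $t\concat\la i\ra$, not $p_{\alpha+1}^{\la t\ra}$ itself, since $t$ remains a splitting node of $p_{\alpha+1}$.
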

   \begin{proof}
   	Let $\chi> 2^\kappa$ is a regular cardinal.
   	Let $p \in \qa$ and let $\name{\tau}$ be a name for function from $\kappa$ into $\kappa^+$. Here the cardinal successor is interpreted in the ground model.
 We pick an $N \prec H(\theta)$ of size $\kappa$ with ${}^{<\kappa} N $ with $\kappa$, $p$, $\bP_1 \in N$ and let $\langle I_\eps : \eps < \kappa \rangle $ list all dense subseteq of $\bP_1$ that are elements of $N$.  	
   	Now by induction on $\eps< \kappa$ we choose conditions $p_\eps$, and sets $\{a_{s \concat \langle i \rangle} : s \in \spl_\eps(p_\eps), i \in \osucc_{p_\eps}(s)\} \subseteq {}^{(\eps +1)} \kappa$  with the following properties:
   	\begin{enumerate}
   \item[(a)] $p_\eps \in N$.	
   		\item[(b)] $p_0 = p$.
   		\item[(c)] If $\eps < \delta$, then $p_\eps \leq_\eps p_{\delta}$.
   		\item[(d)]  At limits $\eps$, $p_\eps = \bigcap \{p_\delta: \delta < \eps\}$.
   		\item[(e)] if $s \in \cl_\eps(p_\eps) \cap \spl_\eps(p_\eps)$, then for every $i \in \osucc_{p_\eps}(s)$, the condition $p_{\eps+1}^{\langle s \concat \langle i \rangle\rangle} $ is in $I_\eps$ and forces
   		$\name{\tau} \rest (\eps+1) = a_{s \concat \la i \ra}$. 
   	\end{enumerate}
   	In the end the fusion $q = \bigcap\{p_\eps: \eps < \kappa\} = \bigcup\{\cl_\eps(p_\eps) : \eps < \kappa\}$ is am $N$-generic condition, since it forces for any $\eps < \kappa$ that one of the $q^{\langle s \concat \langle i \rangle\rangle}$, $s \in 
   	\cl_\eps(q) \cap \spl_\eps(q)= \cl_\eps(p_\eps) \cap \spl_\eps(p_\eps)$, $i \in \osucc_{p_\eps}(s)$, is in $\bG \cap I_\eps \cap N$. For each $\eps < \kappa$, we have 
   	for any $s \in \cl_\eps(q) \cap \spl_\eps(q)= \cl_\eps(p_\eps) \cap \spl_\eps(p_{\eps})$, $i \in \osucc_{p_\eps}(s)$, $q^{\langle s \concat \langle i \rangle\rangle} \geq p_{\eps+1}^{\langle s \concat \langle i \rangle\rangle} $. The condition $p_{\eps+1}$ forces that $\name{\tau}\rest(\eps+1)$ is one of the  values in
   	\[
   	K_\eps= \{a_{s \concat \la i \ra} : s \in \spl_\eps(q)= \spl_\eps(p_{\eps}), i \in \osucc_{p_{\eps}}(s)\}
   	\] 
   	that are given by  $p_{\eps+1}^{\langle s \concat \langle i \rangle\rangle} $, $s \in \spl_\eps(q)= \spl_\eps(p_{\eps})$, $i \in \osucc_{p_{\eps}}(s)$.
   	For any $\eps$, the stronger condition $q$ forces this. By $\kappa^{< \kappa} = \kappa$, we have $|K_\eps| \leq \kappa$.
   	Since $|\bigcup\{K_\eps : \eps < \kappa\} | \leq \kappa$, the forcing
   	preserves $\kappa^+$ as a cardinal.
   \end{proof}
   
   This concludes the proof of \autoref{cor2} in the weakly Mahlo case. In the complementary case, we finish with \autoref{b14}.

   \begin{remark}
In the tree forcings considered here, any stationary subset $S$ of $\kappa$ stays stationary in any $\bP_1$-extension. This is so since $\bP_1$ is strongly $(<\kappa)$-distributive, i.e.,  for any sequence $\seq{D_\beta}{\beta<\kappa}$ of dense subsets of $\qb$ and any $p \in \bP_1$, there is a sequence $\seq{p_\beta}{\beta<\kappa}$ such that for $\beta < \kappa$, $p_\beta \in D_\beta$ and $p_0 \leq p$, see \cite[Lemma 3.8]{Hannes_Jakob_23}.

   	\nothing{Every stationary set in $\checki[\kappa]$ is preserved by any $< \kappa$-closed forcing by \cite{Sh:88a}, see \cite[Theorem 2.18, Theorem 3.6]{Eisworth_handbook}. The forcing orders $\qb$ and $\qa$ are $(<\kappa)$-closed.}
   \end{remark}

 \section{The Case of a Regular Limit Cardinal $\kappa$}
 \label{S4}

In this section we prove \autoref{b14}.
Let $\kappa$ be a regular limit cardinal. If the set of regular cardinals below $\kappa$ is stationary, then \autoref{main_theorem_easy_case} applies. If not, the set  $S_{{\rm sing}, \kappa}$ of singular cardinals in $\kappa$ contains a club in $\kappa$. 
 In any case, $S_{{\rm sing}, \kappa}$ is stationary in $\kappa$. We apply the 
regressive function $\cf \colon S_{{\rm sing},\kappa} \to \kappa$  and find a regular cardinal $\mu$ and such that $S_{{\rm sing},\kappa} \cap \cof(\mu)$ is stationary in $\kappa$.
Also for a weakly Mahlo cardinal, the set $S_{{\rm sing},\kappa}$ of singular cardinals is stationary, and hence for some $\mu$, also $S_{{\rm sing}, \kappa}\cap \cof(\mu)$ is stationary in $\kappa$. So for the main iterability theorem, we can do without \autoref{main_theorem_easy_case}.

First we recall club guessing.

\begin{theorem}[{\cite[Def. III.1.3, Claim III.2.7, page 128]{Sh:g}}]\label{club_guessing}
Let $\mu  < \kappa$, $\kappa$ be a regular limit cardinal, and $\cf(\mu) = \mu$.
Let $S \subseteq \kappa \cap \cof(\mu)$ be stationary in $\kappa$. There is a sequence $\seq{C_\delta}{\delta \in S}$ with the following properties:
For any
club $E$ there is some $\delta \in E\cap S$ such that $C_\delta \subseteq E \cap \delta$. Moreover, for club many $\delta \in S$,  $\sup\{\cf(\alpha): \alpha \in C_\delta\} = |\delta|$.
\end{theorem}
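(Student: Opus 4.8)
The statement is Shelah's club guessing theorem, and I would prove it in two stages: first the bare guessing, by the standard ``iterated shrinking'' argument, and then the cofinality clause, by choosing the base clubs to carry members of large cofinality.

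For the guessing, I fix for each $\delta \in S$ a club $e_\delta \subseteq \delta$ with $\ot(e_\delta) = \mu$; this is possible because $\cf(\delta) = \mu$. Suppose toward a contradiction that no sequence of the required form guesses; in particular, for every club $E \subseteq \kappa$ the shrunk family $\seq{e_\delta \cap E}{\delta \in \acc(E) \cap S}$ fails to guess. I then build a $\subseteq$-decreasing continuous chain $\seq{E_i}{i < \mu^+}$ of clubs with $E_0 = \kappa$: given $E_i$, the failure of guessing for $\seq{e_\delta \cap E_i}{\delta \in \acc(E_i)\cap S}$ supplies a club $E_{i+1} \subseteq E_i$ with $(e_\delta \cap E_i)\setminus E_{i+1} \neq \emptyset$ for every $\delta \in \acc(E_{i+1}) \cap S$, and at limits I take intersections. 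Since $\mu < \kappa$ and $\kappa$ is regular, $\mu^+ < \kappa$, so each $E_i$ and also $E^\ast = \bigcap_{i<\mu^+} E_i$ is a club. Choosing $\delta \in \acc(E^\ast) \cap S$ (possible as $S$ is stationary), for each $i < \mu^+$ I have $\delta \in \acc(E_{i+1})$, hence I may select $\gamma_i \in (e_\delta \cap E_i)\setminus E_{i+1} \subseteq E_i \setminus E_{i+1}$. As the sets $E_i \setminus E_{i+1}$ are pairwise disjoint, the $\gamma_i$ are $\mu^+$ distinct members of $e_\delta$, contradicting $|e_\delta| = \mu$. Hence some club $E$ makes $\seq{e_\delta \cap E}{\delta \in \acc(E)\cap S}$ guess; for $\delta \in \acc(E)$ the set $e_\delta \cap E$ is an intersection of two clubs of $\delta$, hence a club of $\delta$ of order type $\mu$. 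Setting $C_\delta = e_\delta \cap E$ there and $C_\delta = e_\delta$ otherwise yields the guessing sequence: if $D$ is any club and $\delta \in \acc(E)\cap S$ witnesses $C_\delta \subseteq D$, then $\delta = \sup(C_\delta) \in \acc(D) \subseteq D$, so $\delta \in D \cap S$ and $C_\delta \subseteq D \cap \delta$.

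For the cofinality clause I would first observe that, as $\kappa$ is a regular limit cardinal, the limit cardinals below $\kappa$ form a club $C^\ast$, and for $\delta \in S \cap C^\ast$ with $\delta > \mu$ the ordinal $\delta$ is a singular cardinal with $|\delta| = \delta$ and $\cf(\delta) = \mu$. The members of $C_\delta$ of large cofinality must then be sought among its nonaccumulation points, since any accumulation point of a club of order type $\mu$ has cofinality below $\mu$. The key resource is that $\kappa \cap \cof(\rho)$ is stationary in $\kappa$ for every regular $\rho < \kappa$, so every club $E$ of $\kappa$ meets $\cof(\rho)$ cofinally; accordingly I would take the base clubs $e_\delta$ (for $\delta \in S\cap C^\ast$) to have $\nacc(e_\delta)$ consisting of ordinals of cofinality tending to $\delta$, so that $\sup\{\cf(\alpha) : \alpha \in e_\delta\} = \delta = |\delta|$.

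The main obstacle is that this large-cofinality feature is fragile under the shrinking $e_\delta \mapsto e_\delta \cap E$: plain intersection can delete exactly the high-cofinality nonaccumulation points and leave only accumulation points of cofinality $\le \mu$. I would therefore run the shrinking argument with the extra bookkeeping that the surviving clubs keep their nonaccumulation points on a prescribed, cofinality-rich set inside each $E_i$, so that the guessed $C_\delta$ inherits nonaccumulation points of cofinality cofinal in $\delta$; crucially, the counting contradiction above is insensitive to this refinement, since it only ever counts elements of the fixed set $e_\delta$ of size $\mu$. Reconciling ``$C_\delta \subseteq E$'' with ``$C_\delta$ has members of cofinality cofinal in $\delta$'' is the genuinely delicate point, and it is where the machinery behind the cited claims of \cite{Sh:g} does the real work; everything else is the routine shrinking-and-counting above.
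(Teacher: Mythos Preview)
The paper does not prove this theorem: it is quoted from Shelah's \emph{Cardinal Arithmetic} with a precise citation and used as a black box, so there is no in-paper proof to compare your proposal against.

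On the merits of your argument: your first stage---the iterated shrinking over $\mu^+$ rounds, using that $\kappa$ is a regular limit so that $\mu^+<\kappa$ and the diagonal intersection $E^\ast$ is still club, and finishing by extracting $\mu^+$ distinct points from a set of size $\mu$---is correct and is exactly the standard proof of basic club guessing at this gap.

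Your second stage is, by your own admission, not a proof but a plan. You correctly isolate the real obstacle: intersecting $e_\delta$ with a club $E$ can strip out precisely the high-cofinality nonaccumulation points and leave only accumulation points of cofinality $\le\mu$, so the na\"ive shrinking does not preserve the ``$\sup\{\cf(\alpha):\alpha\in C_\delta\}=|\delta|$'' clause. The actual argument in \cite{Sh:g} does not merely add bookkeeping to the same shrinking; it replaces plain intersection by a ``gluing'' or ``drop'' operation (roughly, for each $\alpha\in\nacc(e_\delta)$ one replaces $\alpha$ by $\sup(E\cap\alpha)$, which has the same high cofinality when $\alpha\in\acc(E)$) and then reruns the $\mu^+$-length contradiction with this modified operation. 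Your sketch gestures in the right direction but does not supply this mechanism, so as written the ``Moreover'' clause remains unproved.
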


So we can thin out $S$ to such a club $C$ as in the last sentence and for each 
$\delta \in S \cap C$, we choose a cofinal club subsequence $\seq{\alpha_{\delta, i}}{i < \mu}$ in $C_\delta$ such that  
$\lim_{i<\mu} \seq{\cf(\alpha_{\delta, i+1})}{i<\mu} = |\delta|$.

\begin{fact}
For any weakly inaccessible $\kappa$, $\aleph_\kappa = \kappa$ and $C_{{\rm fix}, \kappa}= \{\alpha , \kappa: \aleph_\alpha = \alpha \}$ is club in $\kappa$.
\end{fact}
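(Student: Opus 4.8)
The plan is to rely entirely on the fact that the aleph function $F(\alpha)=\aleph_\alpha$ is a \emph{normal} function on the ordinals, i.e.\ strictly increasing and continuous at limits, together with the standard inequality $\aleph_\alpha\ge\alpha$ (proved by transfinite induction). From normality the two assertions of the fact follow by localizing the usual ``the fixed points of a normal function form a club'' argument below $\kappa$ and using that $\kappa$ is regular.

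First I would establish $\aleph_\kappa=\kappa$. Since $\aleph_\alpha\ge\alpha$ for every $\alpha$, we have $\aleph_\kappa\ge\kappa$. For the converse, let $\gamma$ be least with $\aleph_\gamma=\kappa$; because $\kappa$ is a \emph{limit} cardinal, $\gamma$ is a limit ordinal, so $\kappa=\aleph_\gamma=\sup_{\beta<\gamma}\aleph_\beta$ and hence $\cf(\kappa)=\cf(\gamma)$. Regularity of $\kappa$ gives $\cf(\gamma)=\kappa$, and as $\gamma\le\aleph_\gamma=\kappa$ this forces $\gamma=\kappa$, i.e.\ $\aleph_\kappa=\kappa$.

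Next, the closedness of $C_{\mathrm{fix},\kappa}$ is immediate from continuity: if $\langle\alpha_i:i<\theta\rangle$ is an increasing sequence of fixed points with $\alpha=\sup_i\alpha_i<\kappa$, then $\aleph_\alpha=\sup_i\aleph_{\alpha_i}=\sup_i\alpha_i=\alpha$, so $\alpha$ is again a fixed point. For unboundedness, given $\beta<\kappa$ I would iterate $F$ starting from $\beta$: set $\beta_0=\beta$ and $\beta_{n+1}=\aleph_{\beta_n}$. By $\aleph_\kappa=\kappa$ and strict monotonicity, $\xi<\kappa$ implies $\aleph_\xi<\aleph_\kappa=\kappa$, so each $\beta_n<\kappa$; since $\kappa$ is regular and uncountable, $\beta_\omega:=\sup_n\beta_n<\kappa$, and continuity gives $\aleph_{\beta_\omega}=\sup_n\aleph_{\beta_n}=\sup_n\beta_{n+1}=\beta_\omega$, a fixed point lying strictly above $\beta$ and below $\kappa$.

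The argument is routine; the only point requiring the hypotheses is keeping the fixed-point iteration below $\kappa$, which is exactly where regularity (to bound the $\omega$-supremum) and $\aleph_\kappa=\kappa$ (to bound each successor iterate) are used. In particular weak inaccessibility enters only through ``regular limit cardinal''; no strong-limit assumption is needed.
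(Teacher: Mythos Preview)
Your proof is correct and follows essentially the same route as the paper: both arguments obtain $\aleph_\kappa=\kappa$ by writing $\kappa=\aleph_\gamma$ for a limit $\gamma$, using regularity to force $\cf(\gamma)=\kappa$ and hence $\gamma=\kappa$, and both prove the club property via continuity of the aleph function together with the $\omega$-iteration $\beta_{n+1}=\aleph_{\beta_n}$ for unboundedness. Your write-up is somewhat more explicit about the intermediate inequalities (e.g.\ $\gamma\le\aleph_\gamma=\kappa$ and the use of regularity to keep the $\omega$-supremum below $\kappa$), but the underlying argument is identical.
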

\begin{proof}
First $\kappa$ is a limit cardinal, so there is a limit ordinal $\lambda$, such that $\kappa = \aleph_\lambda$. In addition $\kappa$ is regular, hence $\kappa = \cf(\kappa) = \cf(\lambda)$. So we have $\lambda = \kappa$.
Since the $\aleph$-operation is continuous, the set $C_{{\rm fix}, \kappa}$ is closed. We show that it is unbounded. To this end, let $\alpha < \kappa$. Then $\aleph_\alpha < \aleph_\kappa = \kappa$. Hence we can define $\alpha_0 = \alpha$, $\alpha_{n+1} = \aleph_{\alpha_n}$ for $n < \omega$. Then $\bigcup \{\alpha_n : n< \omega \} \in C_{{\rm fix}, \kappa}$.
\end{proof}

If $S \subseteq C_{{\rm fix }, \kappa}$, then the chosen sequences from above fulfil
$\lim_{i<\mu} \seq{\cf(\alpha_{\delta, i+1})}{i<\mu} = \delta$.  Such sequences will be important below.

We start with a combinatorial principle (in ZFC) and a name for a possible diamond sequence.
\begin{definition} We let $\forall^{\rm club} i < \mu, \varphi(i)$ mean 
\begin{itemize}
\item[•] for any large enough $i < \omega$, $\varphi(i)$, if $\mu = \omega$,
\item[•] $\{i < \mu :\varphi(i)\}$ contains a club, if $\mu > \omega$.\end{itemize}
 We let $\exists^{\rm stat} \alpha < \delta, \varphi(\alpha)$ mean that the 
 set of $\alpha$ with $\varphi(\alpha)$ is stationary in $\delta$.
\end{definition}

\begin{definition}\label{b2}
  Let $\kappa > \mu$, $\kappa$ be a regular limit cardinal, $\mu$ regular cardinals, and let $S \subseteq \kappa \cap \cof(\mu)$ be stationary in $\kappa$.
  Also $\mu= \omega$ is possible.
\begin{enumerate}
\item[ $\boxplus_{\kappa, \mu, S}$] is the following statement: There are $\bar{C}$, $\bar{\alpha}$ and $f$ with the following properties:
 \begin{enumerate}
 \item[(a)]   
 $\bar{C}=\seq{C_\delta}{\delta \in S}$. $C_\delta$ is a club in $\delta$ of order type $\mu$.
 \item[(b)] For each $\delta \in S$, the sequence $\seq{\alpha_{\delta, i}}{i < \mu}$ is an increasing continuous enumeration of $C_\delta$. We write
\[
\bar{\alpha} = \seq{\seq{\alpha_{\delta, i}}{i<\mu}}{\delta \in S}.
\] 
\item[(c)] For each $\delta \in S$, the sequence
 $\seq{ \cf(\alpha_{\delta, i+1})}{i < \mu})$ is strictly increasing with 
  limit  $\delta$. By the choice of $S$, $\delta$ is a cardinal. 
  \item[(d)]
Suppose that $E$ is a club of $\kappa$. Then for stationarily many $\delta \in S$  we have:
\begin{itemize}
\item[•] for any large enough $i < \omega$, $\alpha_{\delta, i+1} \in E$, if $\mu = \omega$;
\item[•] $\forall^{\rm club}i < \mu, \alpha_{\delta, i+1} \in E$, if $\mu > \omega$.
\end{itemize}
\item[(e)] The function $f \colon \kappa \to \kappa$ satisfies $f(\beta)< \min(\beta, 1)$ for any $\beta < \kappa$.
\item[(f)] For $\delta \in S':= S \cap C_{{\rm fix}, \kappa}$ and $\beta < \delta$, we let
\begin{equation*}
 S_{\delta, i, \beta} := \{\gamma < \alpha_{\delta, i+1} : f(\gamma) = \beta\}.
 \end{equation*}
We require: For any $\delta \in S'$ and any $\beta < \delta$ the statement
\begin{equation*}\label{code1a}
 S_{\delta, i, \beta} \mbox{ is stationary in }\alpha_{\delta, i+1}.
 \end{equation*}
holds
\begin{itemize}
\item[•] for any large enough $i < \omega$, if $\mu = \omega$;
\item[•] $\forall^{\rm club} i < \mu$, if $\mu > \omega$.
\end{itemize}

\end{enumerate}\end{enumerate}\end{definition}

\begin{lemma}\label{b5}
Let  $\kappa$ be weakly inaccessible and $\mu < \kappa$ be regular and let $S \subseteq \kappa \cap \cof(\mu)$ be stationary. Then
$\boxplus_{\kappa, \mu, S}$ holds. Indeed, for the function $f$ as below for any club $E$ in $\kappa$, for stationarily many $\delta \in S \cap E$   clauses (d), (e) and (f) hold simultanously, i.e, for any $\beta < \delta$,
\begin{equation*}\label{code1b}
S_{\delta, i, \beta} \mbox{ is stationary in }\alpha_{\delta, i+1} \wedge \alpha_{\delta, i+1} \in E
 \end{equation*}
holds
\begin{itemize}
\item[•] for any large enough $i < \omega$, if $\mu = \omega$;
\item[•] $\forall^{\rm club} i < \mu$, if $\mu > \omega$.
\end{itemize}

\end{lemma}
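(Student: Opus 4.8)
The plan is to combine the club-guessing sequence from \autoref{club_guessing} with the $\aleph$-fixed-point structure captured by $C_{{\rm fix},\kappa}$, and to take $f$ to be the cofinality-coding function already used in \autoref{boxplus1}. First I would fix a club-guessing sequence $\seq{C_\delta}{\delta \in S}$ as provided by \autoref{club_guessing}, thin $S$ by a club so that $\sup\{\cf(\alpha) : \alpha \in C_\delta\} = |\delta|$ for every remaining $\delta$, and set $S' = S \cap C_{{\rm fix},\kappa}$, which is still stationary since $C_{{\rm fix},\kappa}$ is club by the Fact. For $\delta \in S'$ we have $\aleph_\delta = \delta$, hence $|\delta| = \delta$, so the cofinality spectrum of $C_\delta$ is cofinal in $\delta$. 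I would then thin each $C_\delta$ to a closed cofinal subsequence $\seq{\alpha_{\delta,i}}{i<\mu}$ of order type $\mu$, continuous at limits, and chosen so that $\seq{\cf(\alpha_{\delta,i+1})}{i<\mu}$ is strictly increasing with limit $\delta$; this is possible precisely because the cofinalities occurring in $C_\delta$ are unbounded in $\delta$. Relabelling $C_\delta$ as the range of this subsequence gives clauses (a)--(c), and since the new $C_\delta$ is contained in the old one, the club-guessing property is preserved.

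Next I would define $f$ by $f(\gamma) = \beta$ if $\cf(\gamma) = \aleph_{\beta+1}$ and $f(\gamma)=0$ otherwise, exactly as in \autoref{boxplus1}; this is regressive and so satisfies clause (e). For clause (f), fix $\delta \in S'$ and $\beta < \delta$. Since $\delta = \aleph_\delta$ we have $\aleph_{\beta+1} < \aleph_\delta = \delta$, and as $\cf(\alpha_{\delta,i+1}) \to \delta$ there is a tail of indices $i$ with $\aleph_{\beta+1} < \cf(\alpha_{\delta,i+1})$. For each such $i$, the standard fact that $\{\gamma < \alpha : \cf(\gamma) = \nu\}$ is stationary in $\alpha$ whenever $\nu < \cf(\alpha)$ are regular cardinals yields that $S_{\delta,i,\beta} = \{\gamma < \alpha_{\delta,i+1} : f(\gamma) = \beta\}$ is stationary in $\alpha_{\delta,i+1}$. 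A tail of $\mu$ is eventual (for $\mu = \omega$) and a club (for $\mu > \omega$), so this is precisely clause (f).

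For the strengthened conclusion I would upgrade club guessing to its stationary form: if, for a club $E$, the set $\{\delta \in S' : C_\delta \subseteq E\}$ were nonstationary, intersecting a disjoint club with $E$ and applying the guessing property to this smaller club produces a contradiction; hence for every club $E$ there are stationarily many $\delta \in S'$ with $C_\delta \subseteq E$. Applying this to $E \cap C_{{\rm fix},\kappa}$ gives stationarily many $\delta \in S' \cap E$ with $C_\delta \subseteq E$; for such $\delta$ every $\alpha_{\delta,i+1}$ lies in $E$, giving clause (d), and combining with the previous paragraph, for each $\beta < \delta$ both $\alpha_{\delta,i+1}\in E$ and ``$S_{\delta,i,\beta}$ stationary in $\alpha_{\delta,i+1}$'' hold on a common tail (resp.\ club) of indices $i$. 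This is the simultaneous statement claimed.

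The main obstacle I anticipate is keeping the two mechanisms compatible: club guessing controls which $\delta$ reflect a given club $E$ and fixes the $C_\delta$, while the stationarity of $S_{\delta,i,\beta}$ needs $\cf(\alpha_{\delta,i+1})$ to climb arbitrarily high below $\delta$. The identity $|\delta| = \delta$ for $\aleph$-fixed points is what reconciles them --- without passing to $S' \subseteq C_{{\rm fix},\kappa}$ the cofinality spectrum only reaches $|\delta|$, which for a singular cardinal $\delta$ of small cofinality could be far below $\delta$ and would break clause (f). I must also ensure the subsequence $\seq{\alpha_{\delta,i}}{i<\mu}$ can be chosen simultaneously continuous (so $C_\delta$ is genuinely club) and with strictly increasing successor-cofinalities; continuity forces $\cf(\alpha_{\delta,i}) = \cf(i)$ at limit $i$, but since clause (c) constrains only successor indices this causes no conflict.
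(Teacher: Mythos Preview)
Your proof is correct and follows essentially the same route as the paper: pass to $S' = S \cap C_{{\rm fix},\kappa}$, invoke club guessing, thin each $C_\delta$ so that the successor cofinalities strictly increase to $\delta$, and take $f$ to be the cofinality-coding function $f(\gamma)=\beta$ iff $\cf(\gamma)=\aleph_{\beta+1}$. The paper is terser and leaves both the stationary upgrade of club guessing and the continuity-versus-successor-index point implicit, whereas you spell these out.
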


\begin{proof} Given $S$, we first form $S'= S \cap C_{{\rm fix}, \kappa}$. Then we apply \autoref{club_guessing} to $S'$ and get  $\bar{C}= \seq{\seq{\alpha_{\delta, i}} { i < \mu }}{\delta \in S'}$ that fulfils 
$\boxplus_{\kappa, \mu, S}$(a), (b), (c)\footnote{For arranging (c), we possibly thin out $C_\delta$ to a sub-club.} and (d) for some $\delta \in S'$.
Here (d) is true for all $i < \mu$.
 We let $f \colon \kappa \to \kappa$  be defined via
\begin{equation*}\label{the_f1}
f(\gamma) = \begin{cases} \beta, & \mbox{ if } \cf(\gamma) = \aleph_{\beta+1};\\
0, & \mbox{ else.}
\end{cases}
\end{equation*} 
We show that $\boxplus_{\kappa, \mu, S}$(f) holds:   Now $\delta \in S'$ is a fixed point of the $\aleph$-operation. For any $\beta < \delta$, $\aleph_{\beta+1} < \delta$. Since
$\sup_{i<\mu} \cf(\alpha_{\delta, i+1}) = \delta$, there is an end segment of $i < \mu$ such that for each $i $ in this end segment, $\cf(\alpha_{\delta, i+1}) > \aleph_{\beta +1}$. Thus the two bullet points in $\boxplus_{\kappa, \mu, S}$(f) are true: For $i$ in this end segment, $S_{\delta, i, \beta} = \{\gamma < \alpha_{\delta, i+1} : f(\gamma) = \beta\}$ is stationary in $\alpha_{\delta, i+1}$.
\end{proof}

\begin{lemma}\label{f8_kappa_mu} 
Let  $\kappa$ be weakly inaccessible and $\mu < \kappa$ be regular. Let  $S \subseteq \kappa \cap \cof(\mu)$ be stationary. If $\boxplus_{\kappa,\mu, S}$ holds, then
 $\qb \Vdash \diamondsuit_\kappa(S)$.
\end{lemma}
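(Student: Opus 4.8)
The plan is to mirror the proof of \autoref{f8}, running the same fusion along a $\kappa$-approximating sequence, but to replace the single global set $S_{\delta,\beta}$ (which is of no use when $\delta$ has cofinality $\mu<\delta$) by the local pieces $S_{\delta,i,\beta}$ supplied by $\boxplus_{\kappa,\mu,S}$(f). I would fix $f$, $\bar C$, $\bar\alpha$ witnessing $\boxplus_{\kappa,\mu,S}$, put $S' = S \cap C_{{\rm fix},\kappa}$ (stationary, since $C_{{\rm fix},\kappa}$ is club), and define the candidate diamond sequence $\seq{\name{\nu}_\delta}{\delta \in S}$ by declaring, for $\delta \in S'$ and $\beta < \delta$,
\begin{equation*}
\qb \Vdash \name{\nu}_\delta(\beta) = 1 \;\leftrightarrow\; (\forall^{\rm club} i < \mu)\,(\forall^{\rm club}\eps \in S_{\delta,i,\beta})\,(\name{\eta}(\eps)=1),
\end{equation*}
where the inner $\forall^{\rm club}$ is taken inside $\alpha_{\delta,i+1}$ and $\name{\nu}_\delta(\beta)=0$ is read off by default when the displayed condition fails; for $\delta\in S\setminus S'$ let $\name{\nu}_\delta$ be the zero sequence. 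This makes $\name{\nu}_\delta$ a name for an element of ${}^\delta 2$.

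Given $p \Vdash$ ``$\name{\xname}\in{}^\kappa 2$ and $\name{D}$ is a club'', I would fix a $\kappa$-approximating sequence $\seq{N_\eps}{\eps<\kappa}$ (\autoref{kappaapproxseq}) with $(\kappa,p,\seq{\name{\nu}_\delta}{\delta\in S},\name{\xname},\name{D},S,\bar C,\bar\alpha,f)\in N_0$, set $E=\{\alpha<\kappa:N_\alpha\cap\kappa=\alpha\}$, and carry out verbatim the fusion $\seq{p_\eps}{\eps\leq\delta}$ of \autoref{f8}: conditions increasing in strength, with $\lg(\tr(p_\eps))\geq\kappa_\eps=N_\eps\cap\kappa$, deciding $\name{\xname}\rest\kappa_\eps$, $\name{D}\cap\kappa_\eps$ and $\min(\name{D}\setminus\kappa_\eps)$, and, at limit $\eps$, lengthening the trunk so that $\tr(p_\eps)(\kappa_\eps)=\xname_\eps(f(\kappa_\eps))$ (legitimate since $f(\kappa_\eps)<\kappa_\eps$). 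The only change is the selection of $\delta$: applying $\boxplus_{\kappa,\mu,S}$(d) to the \emph{club} $\acc(E)$ together with the stationarity of $S'$, I pick $\delta\in S'$ with $\alpha_{\delta,i+1}\in\acc(E)$ for club-many $i<\mu$; then $\delta=\sup_i\alpha_{\delta,i+1}\in\acc(E)$, so $N_\delta\cap\kappa=\delta$ and $\eps\mapsto\kappa_\eps$ is continuous with $\kappa_\delta=\delta$. Since $\delta<\kappa$ and $\qb$ is $(<\kappa)$-complete, $q:=\bigcap_{\eps<\delta}p_\eps$ is a condition, closure of splitting (\autoref{qb}(2)) makes $\tr(q)=\bigcup_{\eps<\delta}\tr(p_\eps)$ a splitting node of length $\delta$, and exactly as in \autoref{f8} one gets $q\Vdash\delta\in\name{D}$ and $q\Vdash\name{\xname}\rest\delta=\bigcup_{\eps<\delta}\xname_\eps$.

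It then remains to verify $q\Vdash\name{\nu}_\delta(\beta)=\name{\xname}(\beta)$ for each $\beta<\delta$. As $\delta\in C_{{\rm fix},\kappa}$ and $\cf(\alpha_{\delta,i+1})\to\delta$, for all large $i$ we have $\cf(\alpha_{\delta,i+1})>\aleph_{\beta+1}$, so $S_{\delta,i,\beta}$ is stationary in $\alpha_{\delta,i+1}$ by $\boxplus_{\kappa,\mu,S}$(f); for club-many $i$ we moreover have $\alpha_{\delta,i+1}\in\acc(E)$, whence $E\cap\alpha_{\delta,i+1}$ is club in $\alpha_{\delta,i+1}$. For such $i$, put $C:=\{\eps\in E\cap\alpha_{\delta,i+1}:\eps\text{ a limit ordinal}\}$, a club in $\alpha_{\delta,i+1}$; every $\eps\in S_{\delta,i,\beta}\cap C$ is a limit with $\kappa_\eps=\eps$ and $f(\eps)=\beta$, so the trunk lengthening yields $\name{\eta}(\eps)=\xname_\eps(f(\eps))=\name{\xname}(\beta)$ (using $\beta<\aleph_{\beta+1}\leq\eps$). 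Thus $q\Vdash(\forall^{\rm club}\eps\in S_{\delta,i,\beta})\,\name{\eta}(\eps)=\name{\xname}(\beta)$ for club-many $i$, which is precisely the defining clause for $\name{\nu}_\delta(\beta)=\name{\xname}(\beta)$. Hence $q\Vdash\name{\xname}\rest\delta=\name{\nu}_\delta$ and $q\Vdash\delta\in\name{D}\cap S'$, so no condition forces the guessing set disjoint from a club, giving $\diamondsuit_\kappa(S)$. I expect the main obstacle to be exactly this matching step: since $\cf(\delta)=\mu<\delta$ the global set $\{\gamma<\delta:f(\gamma)=\beta\}$ may fail to be stationary in $\delta$, so the agreement must be routed through the levels $\alpha_{\delta,i+1}$, and one must check that the club-guessing applied to $\acc(E)$ places club-many trunk-lengthening levels inside each locally stationary $S_{\delta,i,\beta}$; the singular limit $\delta$ must also be absorbed by the fusion, but $(<\kappa)$-completeness handles this just as for a regular $\delta$.
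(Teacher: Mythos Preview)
Your proposal is correct and follows essentially the same route as the paper's proof: the same $\kappa$-approximating sequence, the same fusion with the trunk-lengthening clause $\tr(p_\eps)(\kappa_\eps)=\xname_\eps(f(\kappa_\eps))$ at limit $\eps$, the same use of $\boxplus_{\kappa,\mu,S}$(d) applied to $\acc(E)$ to choose $\delta\in S'$ with $\alpha_{\delta,i+1}\in\acc(E)$ for club-many $i$, and the same local verification through the levels $\alpha_{\delta,i+1}$. The only visible difference is cosmetic: the paper writes the diamond name symmetrically in $j\in\{0,1\}$ with an explicit stationarity premise inside the $i$-quantifier, whereas you take $\name{\nu}_\delta(\beta)=1$ iff the club-of-clubs condition holds and $0$ by default; since $\boxplus_{\kappa,\mu,S}$(f) guarantees that for every $\delta\in S'$ and $\beta<\delta$ the sets $S_{\delta,i,\beta}$ are stationary for club-many $i$, the two formulations agree and your verification of the $\name{x}(\beta)=0$ case goes through (the club-many $\eps\in S_{\delta,i,\beta}$ with $\name{\eta}(\eps)=0$ rule out a club with value $1$, by stationarity of $S_{\delta,i,\beta}$).
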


\begin{proof} We let $S' = S \cap \{\alpha< \kappa: \aleph_\alpha = \alpha\}$ and fix  $\bar{C}= \la C_\delta : \delta \in S' \ra$, $\bar{\alpha}$, $f$  as in $\boxplus_{\kappa,\mu,S}$. 
Recall that $\name{\eta}$ is defined in \autoref{name_eta} and $S_{\delta, i, \beta}$. 

 We define the name $\langle \name{\nu}_\delta : \delta \in S \rangle$ by letting for $\delta \in S'$ 
 and for any $\beta < \delta$, $j = 0,1$,
\begin{equation} \label{diamondname_lemma4.6}
\begin{split}
\qb \Vdash 
&\mbox{``}\name{\nu}_\delta(\beta) = j \mbox{ iff }\\
&
\forall^{\rm club} i \in \mu \bigl(\exists^{\rm stat} \alpha < \alpha_{\delta, i+1} (f(\alpha) = \beta)\\
& \rightarrow \forall^{\rm club} \alpha < \alpha_{\delta, i+1} 
(f(\alpha) = \beta \rightarrow \name{\eta}(\alpha) = j)\bigr)\mbox{''}.
\end{split}\end{equation}
Again for $\delta \in S \setminus S'$ we can take any name $\name{\nu}_\delta$ for a function in ${}^\delta 2$.

We show 
\begin{equation*}\label{oplus1}
\qb \Vdash  \mbox{``}\seq{\name{\nu}_\delta}{\delta \in S} \mbox{ is a $\diamondsuit_\kappa(S)$-sequence.''} 
\end{equation*}

Towards this suppose that 
\begin{equation*}
p \Vdash \mbox{``}\name{\xname} \in {}^\kappa 2, \mbox{ and } \name{D} \mbox{ is a club subset of }\kappa.\mbox{''}
\end{equation*}
We show that there is some $q \geq p$ that forces $\delta \in \name{D}\cap S'$ and $\name{\xname} \rest \delta = \name{\nu}_\delta$.

We let $\chi = \beth_\omega(\kappa)$ and let $<^*_{\chi}$ be a well-ordering of $H(\chi)$. We choose a $\kappa$-approximating sequence (\autoref{kappaapproxseq}) 
$\seq{N_\eps}{\eps < \kappa}$ with the following properties: 
\begin{equation*}
{\bf c} = (\kappa, p, S, \bar{C}, \bar{\alpha}, \name{\bar{\nu}}, \name{\xname}, \name{D}) \in N_0.
\end{equation*}
Again we let $E = \{\alpha < \kappa: N_\alpha \cap \kappa = \alpha \}$.
We pick any $\delta$ with 
\begin{equation}\label{delta}
 \delta \in S'\cap \acc(E) \wedge
\forall^{\rm club} i < \mu (\alpha_{\delta, i+1} \in \acc(E)).
\end{equation}
For $\eps < \delta$, we let $\kappa_\eps = N_\eps \cap \kappa$.

By induction on $\eps \leq \delta$
we chose a condition $p_\eps$ such that for any $\eps$ for any $\zeta < \eps$ the following holds
\begin{enumerate}\item[$(\otimes)$] $p_\eps$ is the $<^*_\chi$-least element such that 
\begin{enumerate} 
\item[(a)] $p_\eps \geq p$

\item [(b)] $p_\eps \geq p_\eta$ for $\zeta<\eps$. 
\item[(c)] $\lg(\tr(p_\eps)) \geq \kappa_\eps$.
\item[(d)] $p_\eps$ forces values to $\name{\xname} \rest \kappa_\eps$, $\name{D} \cap \kappa_\eps$ and $\min (\name{D} \setminus \kappa_\eps)$ call them $\xname_\eps$, $e_\eps$, $\gamma_\eps$ respectively.
\item[(e)] 
for limit $\eps$, $\tr(p_\eps)(\kappa_\eps) = \xname_\eps(f(\kappa_\eps))$. 
\end{enumerate}
\end{enumerate}
We can carry the induction since $\bQ$ is $(<\kappa)$-complete and for clause $(\otimes)$(e) we recall $f(\kappa_\eps)< \kappa_\eps$.

This is literally like the proof of $\circledast_\eps$  in the proof of \autoref{main_theorem_easy_case}.
\nothing{
  More fully, let us prove $(\circledast_\eps)$ by induction on $\eps$, 

\begin{enumerate}
\item[$\circledast_\eps$]
\begin{itemize}
\item[•] ${\bf m}_\eps = \seq{p_\zeta, x_\zeta, e_\zeta, \gamma_\zeta}{\zeta<\eps}$ exists and is unique and $\zeta<\eps$ implies ${\bf m}_{\zeta+1} \in N_{\zeta+1}$.

\item[•] ${\bf m}_\eps$ is defined in $(H(\chi), \in , <_\chi^*)$ by a formula $\varphi = \varphi(x, \bar{y})$ with $x $ for ${\bf m}_\eps$ and $\bar{y} = (y_0, y_1)$ with $y_0 = \bar{N} \rest \eps$ and $y_1 = {\bf c}$ from $(\ast)_1$(e). 
\end{itemize}
\nothing{Moreover, our inductive choice fulfils at any $\eps< \delta$,
\begin{equation}\tag*{\quad $(\ast)_4$}
{\bf m}_\eps \in N_{\eps+1}.
\end{equation}}
\end{enumerate}

{\bf Case 1} $\eps = 0$. 

${\bf m}_0 = \langle \rangle$  and $\lg(\bar{N} \rest 0) = 0$.

{\bf Case 2} $\eps = \zeta+1$.

  Now $p_\eps$ is the $<_\chi^*$ first element of $\bQ$ satisfying clauses $(\ast)_3$(a) - (d). There is no requirement (e), since $\eps$ is a successor.  Clearly such a $p$ exists and hence one of them must be $<_\chi^*$-least. As each element of $H(\chi)$ mentioned above is computabe from $\bar{N} \rest \eps$, it belongs to $N_\eps = N_{\zeta+1}$ since $\bar{N} \rest (\zeta+1) \in N_{\zeta+1}$ by $(\ast)_1$(d)

{\bf Case 3} $\eps$ limit.\\
This is the only place at which we use the specific choice of $\bQ = \qb$ and not just $(<\kappa)$-complete forcings that force $\name{\eta} \not\in\bV$ but that the limit of splitting nodes is a splitting node.
By $(<\kappa)$-completeness $q = \bigcap_{\zeta<\eps} p_\zeta$ in $\qb$. Also for $\zeta < \eps$, $\lg(\tr(p_\zeta)) \geq \kappa_\zeta = N_\zeta \cap \kappa \in N_{\zeta+1}$. Hence $\lg(\tr(p_\zeta)) \in [\kappa_\zeta, \kappa_{\zeta+1})$  for $\zeta <\eps$. Also for 
\[
\forall \zeta\leq \xi <\eps (\tr(p_\xi) \in \Split(p_\zeta))
\] 
by induction hypothesis. Hence by the definition of the Sacks forcing \autoref{qb} clause number (2) we have
$\forall \zeta < \eps (\bigcup \{\tr(p_\xi) : \xi < \eps\} \in \Split(p_\zeta)$ 
and $\bigcup \{\tr(p_\xi) : \xi < \eps\} \in \Split(q)$.
We have $\lg(\tr(q)) = \kappa_\eps$, since $\bar{N}$ is continuous and $\kappa_\eps = N_\eps \cap \kappa$.
Moreover 
$q \Vdash x_{\kappa_\eps} = \name{x} \rest \kappa_\eps$.
We can compute $\tr(q)$, $\kappa_\eps$ and $f(\kappa_\eps)$ from $(\bar{N} \rest \eps, {\bf c})$.
Moreover 
\[q \Vdash \varrho_{\kappa_\eps} = \name{\xname} \rest \kappa_\eps \wedge \kappa_\eps \in \name{D}.
\]
by the induction hypothesis $(\ast)$(a) -- (d) and since $\eps$ is a limit.

Hence $\tr(q)$ has two immediate successors of length $\kappa_\eps +1$ and we can let $p_\eps \geq q$ and 
\begin{equation}\tag{$\odot_1$}
\tr(p_\eps)(\kappa_\eps) = \varrho_{\kappa_\eps}(f(\kappa_\eps)).
\end{equation}

 This is clause $(\ast)$(e) that we have to fulfil.
}

After carrying the induction, we let $q = \bigcap_{\eps < \delta} p_\eps$.
Since $\delta \in E$, we have $\kappa_\delta = \delta$.

We show that 
\begin{equation}\label{diam} q \Vdash \name{\xname} \rest \delta= \name{\nu}_\delta.
\end{equation}

According to \eqref{diamondname_lemma4.6}, this means for any $\beta < \delta$, for any $j \in 2$,
\begin{equation} \label{diamondname2}
\begin{split}
q \Vdash 
&\mbox{``}\name{\xname}(\beta) = j \mbox{ iff }\\
&
\forall^{\rm club} i \in \mu \bigl(\exists^{\rm stat} \alpha < \alpha_{\delta, i+1} (f(\alpha) = \beta)\\
& \rightarrow \forall^{\rm club} \alpha < \alpha_{\delta, i+1} 
(f(\alpha) = \beta \rightarrow \name{\eta}(\alpha) = j)\bigr)\mbox{''}.
\end{split}\end{equation}

Since $\bar{N}$ is a continuous sequence, the function
$\eps \mapsto \kappa_\eps$ is continuously increasing.
Hence $\kappa_\delta = \delta$ and 
 \begin{equation}\label{pinned}
 q \Vdash \name{\xname} \rest \delta= \bigcup\{\xname_{\kappa_\eps}: \eps < \delta\}.
 \end{equation}

Since $q$ forces $\name{\xname}(\beta) = 0$ or $q$ forces $\name{\xname}(\beta) = 1$, it suffices to check the forward direction in the ``iff'' in Statement \eqref{diamondname2}. So suppose that $q \Vdash \name{\xname}(\beta) = j$. We verify that at club many $i < \mu$  there are stationarily many $\alpha < \alpha_{\delta,i+1}$ with $f(\alpha)  = \beta$, and for club many $\eps < \alpha_{\delta, i+1}$, 
\begin{equation}\label{arranged}
q \Vdash (\kappa_\eps = \eps \wedge f(\eps) = \beta) \rightarrow \eta(\eps) = \name{\xname}(\beta) = \xname_\eps(\beta).
\end{equation}
 Since $\delta \in S'$ and since $\eps \mapsto \kappa_\eps$ is continuous from $\kappa$ to $\kappa$, the premise in \eqref{diamondname_lemma4.6} (i.e., the definition of $\name{\nu}_\delta$), $(\forall^{\club }i < \mu )(\exists^{\rm stat} \alpha < \alpha_{\delta, i+1})( f(\kappa_\alpha) = f(\alpha) = \beta)$ holds. This is not a forcing statement. Namely this statement holds for the $i$ such as in $\boxplus_{\kappa, \mu, S}(e)$.
 
 In each limit $\eps$, we respected $(\otimes)$(e). By the choice of $\delta$ according to Statement \eqref{delta}, for club many $i < \mu$,
$\alpha_{\delta, i+1} \in \acc(E)$ and hence for these $i$,  the set of $\eps < \alpha_{\delta, i+1}$ with $\kappa_\eps = \eps$ is a club in $\alpha_{\delta, i+1}$. So Statement~\eqref{arranged} holds.
Together with Statement \eqref{pinned} and the definition of the name
in Statement~\eqref{diamondname_lemma4.6} this shows that  Statement~\eqref{diam} is true.
\end{proof}

\subsection{Weakening $(<\kappa)$-Closure to a Strong Form of Strategic Closure}\label{S4_2}

We recall, a forcing $\bQ$ is  $\kappa$-strategically complete if the following holds: For any  $p \in \bQ$ there is a there is winning strategy in the game $G(p,\kappa)$ for player COM. The game $G(p, \kappa)$  is as follows.  Player COM starts with $p_0= 1_{\bP}$ and player INC plays in any round $q_\alpha \geq p_\alpha$. In successor rounds  COM plays $p_{\alpha+1} \geq q_\alpha$.
In limit rounds $\delta < \kappa$, Player COM plays $p_{\delta} \geq q_\alpha$ for $\alpha < \delta$.
Player COM wins if $p_\delta$ exists for any $\delta \in \kappa$, otherwise player INC wins.
 
 If $\sigma$ is a winning strategy for COM and and COM modifies this strategy by first picking a move according to $\sigma$ and thereafter strengthening it, then this is a winning strategy as well, since INC could have played this stengthening.  
 
Under $\boxplus_{\kappa, \mu, S}$, we may consider the following property.

\begin{enumerate}
\item[${\rm Pr}(\kappa, \mu, S, \bQ)$:]
$\bQ$ is a $\kappa$-strategically complete forcing and there 
is a name $\name{\tau} = \seq{\name{\tau}_\eps}{\eps < \kappa}$, such that for any $p \in \bQ$ there is a winning strategy for COM in 
$G(p,\kappa)$ 
 with the following property: 
 In any play played according to this strategy:
For a club $C$ in $\kappa$ for $\eps \in C$ for $j = 0,1$, there are upper bounds 
$r_{\eps,0}$, $r_{\eps,1}$ of $\seq{p_\zeta, q_\zeta}{\zeta < \eps}$ with $r_{\eps,j} \Vdash \name{\tau}_\eps = j$.
\end{enumerate}

\begin{theorem}
Suppose $\boxplus_{\kappa, \mu, S}$  and that $\bQ$ is a $\kappa$-strategically closed forcing with ${\rm Pr}(\kappa, \mu, S, \bQ)$.
Then  $\bQ \Vdash \diamondsuit_\kappa(S)$.
\end{theorem}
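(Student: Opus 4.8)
The plan is to adapt the fusion-and-diamond argument of \autoref{f8} and \autoref{f8_kappa_mu} to the strategically-complete setting, replacing the explicit trunk-lengthening step by the abstract capability guaranteed by $\mathrm{Pr}(\kappa,\mu,S,\bQ)$. Concretely, I would fix the data $\bar C$, $\bar\alpha$, $f$ witnessing $\boxplus_{\kappa,\mu,S}$ and set $S' = S\cap C_{\mathrm{fix},\kappa}$. The name $\name{\tau} = \seq{\name{\tau}_\eps}{\eps<\kappa}$ supplied by $\mathrm{Pr}$ plays the role that $\name{\eta}$ played before: it is the ``generic bit read off at level $\eps$'' that COM can drive to either value. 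I would then define the candidate diamond sequence $\seq{\name{\nu}_\delta}{\delta\in S}$ exactly as in \eqref{diamondname_lemma4.6}, but with $\name{\eta}(\alpha)$ replaced by $\name{\tau}_\alpha$: for $\delta\in S'$, $\beta<\delta$, $j\in\{0,1\}$,
\[
\bQ \Vdash \mbox{``}\name{\nu}_\delta(\beta)=j \mbox{ iff } \forall^{\rm club} i<\mu\,\bigl(\exists^{\rm stat}\alpha<\alpha_{\delta,i+1}(f(\alpha)=\beta)\rightarrow \forall^{\rm club}\alpha<\alpha_{\delta,i+1}(f(\alpha)=\beta\rightarrow\name{\tau}_\alpha=j)\bigr)\mbox{''},
\]
and $\name{\nu}_\delta$ an arbitrary name in ${}^\delta 2$ for $\delta\in S\setminus S'$.

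**The main construction** proceeds as in \autoref{f8_kappa_mu}: given $p$ forcing that $\name{\xname}\in{}^\kappa 2$ and $\name D$ is a club, I would build a $\kappa$-approximating sequence $\seq{N_\eps}{\eps<\kappa}$ containing all the relevant parameters, let $E=\{\alpha:N_\alpha\cap\kappa=\alpha\}$, and choose $\delta\in S'\cap\acc(E)$ with $\forall^{\rm club}i<\mu\,(\alpha_{\delta,i+1}\in\acc(E))$. Now instead of freely intersecting conditions, I would run COM's winning strategy in the game $G(p,\kappa)$ of length $\delta$, letting the moves $\seq{p_\eps,q_\eps}{\eps<\delta}$ be the $<^*_\chi$-least legal continuations deciding $\name{\xname}\rest\kappa_\eps$, $\name D\cap\kappa_\eps$ and $\min(\name D\setminus\kappa_\eps)$. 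At limit levels $\eps$ lying on the club $C$ promised by $\mathrm{Pr}$, there exist upper bounds $r_{\eps,0},r_{\eps,1}$ of $\seq{p_\zeta,q_\zeta}{\zeta<\eps}$ forcing $\name{\tau}_\eps=0$ and $\name{\tau}_\eps=1$ respectively; I would select $r_{\eps,j}$ for the value $j=\xname_\eps(f(\kappa_\eps))$ dictated by the already-decided initial segment of $\name{\xname}$, thereby realizing the analogue of clause $(\otimes)$(e). Because COM's strategy is winning, the play survives to $\delta$, producing $q=p_\delta$ (an upper bound of the whole play) with $q\Vdash \kappa_\delta=\delta\in\name D$ and $q\Vdash\name{\xname}\rest\delta=\bigcup\{\xname_{\kappa_\eps}:\eps<\delta\}$.

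**The verification** then mirrors \eqref{arranged}: for any $\beta<\delta$, if $q\Vdash\name{\xname}(\beta)=j$, then since $\delta\in S'$ clause $\boxplus_{\kappa,\mu,S}$(f) makes the premise $\exists^{\rm stat}\alpha<\alpha_{\delta,i+1}(f(\alpha)=\beta)$ hold for club-many $i<\mu$, and the choice $\tau$-value at club-many limit levels $\eps$ with $\kappa_\eps=\eps$ and $f(\eps)=\beta$ was forced to equal $\xname_\eps(\beta)=j$; hence $q$ forces the right-hand side of the display defining $\name{\nu}_\delta(\beta)$. This gives $q\Vdash\name{\xname}\rest\delta=\name{\nu}_\delta$ with $\delta\in\name D\cap S'$, which is what diamond guessing requires.

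**The main obstacle** I anticipate is bookkeeping the interaction between the strategy and the $<^*_\chi$-least choices so that the play stays definable over $\seq{N_\eps}{\eps<\delta}$ and the moves land inside the appropriate $N_{\eps+1}$ — the earlier proofs got this for free from $(<\kappa)$-completeness and the concrete trunk structure, whereas here COM's strategy is an external object that must itself be placed in $N_0$ and evaluated internally. The delicate point is that the bound $r_{\eps,j}$ guaranteed by $\mathrm{Pr}$ need only exist, not be canonically chosen, so I would fold the selection of $r_{\eps,j}$ into the $<^*_\chi$-least-witness prescription and verify that the resulting play is still a legal COM-play against the induced INC-moves; once that is arranged, winning-ness of the strategy delivers the limit condition $q=p_\delta$ automatically, which is precisely the feature that replaces the ad hoc ``limit of splitting nodes is a splitting node'' step used for $\qb$.
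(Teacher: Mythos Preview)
Your proposal is correct and follows essentially the same approach as the paper's (sketched) proof: define $\name{\nu}_\delta$ via the formula of \autoref{f8_kappa_mu} with $\name{\tau}_\alpha$ in place of $\name{\eta}(\alpha)$, place the winning strategy into $N_0$, run the play with $<^*_\chi$-least INC-moves deciding $\name{\xname}$ and $\name{D}$, and at club-many limit $\eps$ let COM play $r_{\eps,j}$ for $j = \xname_\eps(f(\kappa_\eps))$. Your ``main obstacle'' paragraph in fact pinpoints exactly the modification the paper highlights---that the strategy must be a parameter in $N_0$ so that the $<^*_\chi$-least play is definable from $\bar{N}\rest\eps$ and hence stays inside the approximating sequence.
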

\begin{proof} (Sketch)
We modify the original proof by adding that the strategy is an element of $N_0$, the first element of an $\kappa$-approximating sequence. 
We work with the following diamond $\seq{\name{\nu}_\delta}{\delta \in S}$ such that for $\delta \in S$ for any $q \in \bQ$:
\begin{equation*} \label{diamondname6}
\begin{split}
q \Vdash 
&\mbox{``}\name{\nu}_\delta(\beta) = j \mbox{ iff }\\
&
\forall^{\rm club} i \in \mu \bigl(\exists^{\rm stat} \alpha < \alpha_{\delta, i+1} (f(\alpha) = \beta)\\
& \rightarrow \forall^{\rm club} \alpha < \alpha_{\delta, i+1} 
(f(\alpha) = \beta \rightarrow \name{\tau}_\alpha = j)\bigr)\mbox{''}.
\end{split}
\end{equation*}

We proceed as in the proof of \autoref{b14}. There, clause $(\otimes)$(e) says $p_\eps \Vdash \tr(p_\eps)(\kappa_\eps) = x_\eps(f_\eps(\kappa_\eps))$. On the club set of $\eps$ with $\kappa_\eps = \eps$, now  player COM chooses $j\in 2$ such that that
  $p_\eps = r_ {\eps, j}\Vdash \tau_\eps = x_\eps(f_\eps(\eps))$.
 
\end{proof}

\begin{remark}
We wrote a ``a strong form of strategic closure'', since  ``$(<\kappa)$-strategically closed'' means often that for each $\alpha < \kappa$,  player COM has a winning strategy in $G(p,\alpha)$. A typical example is the forcing adding a $\square_{\aleph_1}$-sequence with $(<\aleph_2)$-sized closed initial segments for $\kappa = \aleph_2$. $(< \kappa)$-strategical completeness does not allow to carry out the above proof, since separate strategies for each $\delta < \kappa$  cannot be all contained as elements in an elementary submodel of size $<\kappa$.   
\end{remark}

\section{Higher Miller Forcing with Splitting into a Club}\label{S5}

Higher Miller forcing and higher Laver forcing are special among our  forcings, since there is a  name for a $\diamondsuit$-sequence in the respective forcing extensions that is much simpler than the other names for diamonds.  Now we prove \autoref{main_theorem_3}. The proof works for either of these two forcings.

\begin{proof} (1) Let $\kappa$ be a regular uncountable cardinal. We give a $\qa$-name that witnesses $\qa \Vdash \diamondsuit_\kappa$.

Let $\seq{S_\alpha}{\alpha \in \kappa}$ be a partition of $\kappa$ into stationary sets. 
For each $\alpha < \kappa$, 
we let $\seq{t_{\alpha, \eps}}{\eps < \kappa}$ be 
an enumeration of ${}^\alpha \kappa$.
For $i<\kappa$ we let $u_{\alpha,i} = t_{\alpha, \eps}$ if $i \in S_\eps$. 
Recall, $\name{\eta}$ is a name of the generic branch.
Now we give a name for a $\diamondsuit_\kappa(S)$-sequence:
\[
\qa \Vdash \name{\bar{d}} = \seq{\name{d}_\alpha= u_{\alpha, \name{\eta}(\alpha)}}{\alpha \in \kappa}.
\]
We show 
\[\qa \Vdash \mbox{``}\name{\bar{d}} \mbox{ is a $\diamondsuit_\kappa$-sequence.''}
\]

We assume $p \Vdash \name{x} \in {}^\kappa \kappa, \name{C} $ is a club in $\kappa$. We show that there are some $\alpha < \kappa$ and a stronger condition $q$ that forces $\alpha \in \name{C}$ and $\name{x} \rest \alpha = \name{d}_\alpha$.
By induction on $n < \omega$ we choose $p_n$ and $\alpha_n \in \kappa$ such that
\begin{enumerate}
\item[(a)] $p_0=p$,
\item[(b)] $p_n \leq p_{n+1}$,
\item[(c)] $\alpha_n < \dom(\tr(p_n)) \leq \alpha_{n+1}$,
\item[(d)] $p_n \Vdash \alpha_n \in \name{C}$,
\item[(e)] $p_{n+1} $ forces a value in $\bV$ to $\name{\xname} \rest \alpha_n$, we call it $\xname_n$.
\end{enumerate}
The induction can be carried since the forcing is $(<\kappa)$-closed and hence does not add new elements to ${}^{\kappa>} \kappa$.
Also by closure, the set $p_\omega = \bigcap\{p_n : n < \omega\}$ is a condition. We let $\alpha = \sup_n \alpha_n$. Then $\dom(\tr(p_\omega)) = \alpha$.  
We let $\bigcup\{\xname_n : n < \omega\} = \xname_\omega$ and notice $\xname_\omega \in {}^\alpha \kappa$.
By construction,
\[p_\omega \Vdash \name{x} \rest \alpha = v_\omega \wedge \alpha \in \name{C}.
\]
Now we strengthen $p_\omega$  by a trunk lengthening:  
The set $\osucc_{p_\omega}(\tr(p_\omega))$ is a club subset of $\kappa$ and thus has non-empty intersection with each $S_\eps$, $\eps < \kappa$. We choose $\eps$ to be an $\eps$ with $t_{\alpha, \eps} = \xname_\omega$.  We pick some 
$i \in S_\eps \cap \osucc_{p_\omega}(\tr(p_\omega))$. Then $u_{\alpha, i } = t_{\alpha,\eps}$.
Now \[
p_\omega^{\langle \tr(p_\omega) \concat \langle i \rangle\rangle} \Vdash \name{\eta}(\alpha) = i \wedge \name{d}_\alpha = u_{\alpha,i} = t_{\alpha, \eps} = \xname_\omega = \name{x} \rest \alpha.
\]
\medskip

(2) Let a stationary set $S \subseteq \kappa$ be given.
We work with the same $\seq{S_\eps}{\eps<\kappa}$,  $\seq{t_{\alpha, \eps}}{\eps < \kappa}$ and $u_{\alpha, i}$ for $i \in S_\eps$ and $\alpha < \kappa$ as above. We give a name for a $\diamondsuit_\kappa(S)$-sequence:
\[
\name{\bar{d}} = \seq{\name{d}_\alpha= u_{\alpha, \name{\eta}(\alpha)}}{\alpha \in S}.
\]
We show 
\[\qa \Vdash \mbox{``}\name{\bar{d}} \mbox{ is a $\diamondsuit_\kappa(S)$-sequence.''}
\]
Let $p \Vdash \mbox{``}\name{C}$ is club in $\kappa$ and
$\name{\xname} \subseteq \kappa$''.
We show that there is $q \geq p$ and $\delta \in S$ with 
\begin{equation}\label{aim}
q \Vdash \delta \in \name{C} \wedge \name{\xname} \rest \delta = \name{d}_\delta.
\end{equation}
 
We let $\chi = \beth_\omega(\kappa)$ and let $<^*_{\chi}$ be a well-ordering of $H(\chi)$. We choose a $\kappa$-approximating sequence (\autoref{kappaapproxseq}) 
with \begin{equation*}
 {\bf c} = (\kappa, p, \name{\bar{d}}, \name{\xname}, \name{D}, S) \in N_0.
\end{equation*}
Again we let $E = \{\alpha < \kappa: N_\alpha \cap \kappa = \alpha \}$.
We pick any $\delta$ with 
\begin{equation*}
 \delta \in S \cap \acc(E) 
\end{equation*}
For $\eps < \delta$, we let $\kappa_\eps = N_\eps \cap \kappa$.

By induction on $\eps \leq \delta$
we chose a condition $p_\eps$ such that for any $\eps$ for any $\zeta < \eps$ the following holds
\begin{enumerate}\item[$(\oplus)$] $p_\eps$ is the $<^*_\chi$-least element such that 
\begin{enumerate} 
\item[(a)] $p_\eps \geq p$

\item [(b)] $p_\eps \geq p_\eta$ for $\zeta<\eps$. 
\item[(c)] $\lg(\tr(p_\eps)) \geq \kappa_\eps$.
\item[(d)] $p_\eps$ forces values to $\name{\xname} \rest \kappa_\eps$, $\name{D} \cap \kappa_\eps$ and $\min (\name{D} \setminus \kappa_\eps)$ call them $\xname_\eps$, $e_\eps$, $\gamma_\eps$ respectively.
\item[(e)] 
for limit $\eps$, $\tr(p_\eps)(\kappa_\eps) = \xname_\eps(f(\kappa_\eps))$. 
\end{enumerate}
\end{enumerate}

As in the proof of \autoref{f8} or of \autoref{b14}, it is shown that we can carry the induction.
\nothing{
We can carry the induction since $\bQ$ is $(<\kappa)$-complete and for clause (e) we recall $f(\kappa_\eps)< \kappa_\eps$. More fully, let us prove $(\circledast_\eps)$ by induction on $\eps$, 

\begin{enumerate}
\item[$\circledast_\eps$]
\begin{itemize}
\item[•] ${\bf m}_\eps = \seq{p_\zeta, x_\zeta, e_\zeta, \gamma_\zeta}{\zeta<\eps}$ exists and is unique and $\zeta<\eps$ implies ${\bf m}_{\zeta+1} \in N_{\zeta+1}$.

\item[•] ${\bf m}_\eps$ is defined in $(H(\chi), \in , <_\chi^*)$ by a formula $\varphi = \varphi(x, \bar{y})$ with $x $ for ${\bf m}_\eps$ and $\bar{y} = (y_0, y_1)$ with $y_0 = \bar{N} \rest \eps$ and $y_1 = {\bf c}$ from $(\ast)_1$(e). 
\end{itemize}

\end{enumerate}

{\bf Case 1} $\eps = 0$. 

${\bf m}_0 = \langle \rangle$  and $\lg(\bar{N} \rest 0) = 0$.

{\bf Case 2} $\eps = \zeta+1$.

  Now $p_\eps$ is the $<_\chi^*$ first element of $\bQ$ satisfying clauses $(\ast)_3$(a) - (d). There is no requirement (e), since $\eps$ is a successor.  Clearly such a $p$ exists and hence one of them must be $<_\chi^*$-least. As each element of $H(\chi)$ mentioned above is computabe from $\bar{N} \rest \eps$, it belongs to $N_\eps = N_{\zeta+1}$ since $\bar{N} \rest (\zeta+1) \in N_{\zeta+1}$ by $(\ast)_1$(d)

{\bf Case 2} $\eps$ limit.\\
This is the only place at which we use the specific choice of $\bQ = \qb$ and not just $(<\kappa)$-complete forcings that force $\name{\eta} \not\in\bV$ but that the limit of splitting nodes is a splitting node.
By $(<\kappa)$-completeness $q = \bigcap_{\zeta<\eps} p_\zeta$ in $\qb$. Also for $\zeta < \eps$, $\lg(\tr(p_\zeta)) \geq \kappa_\zeta = N_\zeta \cap \kappa \in N_{\zeta+1}$. Hence $\lg(\tr(p_\zeta)) \in [\kappa_\zeta, \kappa_{\zeta+1})$  for $\zeta <\eps$. Also for 
\[
\forall \zeta\leq \xi <\eps (\tr(p_\xi) \in \Split(p_\zeta))
\] 
by induction hypothesis. Hence by the definition of the Sacks forcing \autoref{qb} clause number (2) we have
$\forall \zeta < \eps (\bigcup \{\tr(p_\xi) : \xi < \eps\} \in \Split(p_\zeta)$ 
and $\bigcup \{\tr(p_\xi) : \xi < \eps\} \in \Split(q)$.
We have $\lg(\tr(q)) = \kappa_\eps$, since $\bar{N}$ is continuous and $\kappa_\eps = N_\eps \cap \kappa$.
Moreover 
$q \Vdash x_{\kappa_\eps} = \name{x} \rest \kappa_\eps$.
We can compute $\tr(q)$, $\kappa_\eps$ and $f(\kappa_\eps)$ from $(\bar{N} \rest \eps, {\bf c})$.
Moreover 
\[q \Vdash \varrho_{\kappa_\eps} = \name{\xname} \rest \kappa_\eps \wedge \kappa_\eps \in \name{D}.
\]
by the induction hypothesis $(\ast)_3$(a) -- (d) and since $\eps$ is a limit.

Hence $\tr(q)$ has two immediate successors of length $\kappa_\eps +1$ and we can let $p_\eps \geq q$ and 
\begin{equation*}
\tr(p_\eps)(\kappa_\eps) = \varrho_{\kappa_\eps}(f(\kappa_\eps)).
\end{equation*}

 This is clause $(\ast)_3$(e) that we have to fulfil.
}

Now we carried the induction. We let $q = \bigcap_{\eps < \delta} p_\eps$.

Since $\delta \in E$, we have $\kappa_\delta = \delta$.

Now we strengthen $p_{\delta}$  by a trunk lengthening:  
The set $\osucc_{p_{\delta}}(\tr(p_{\delta}))$ is a club subset of $\kappa$ and thus has non-empty intersection with each $S_\eps$, $\eps < \kappa$. There is some $\eps< \kappa$ with $t_{\alpha, \eps} = \xname_{\delta}$.  We pick some 
$i \in S_\eps \cap \osucc_{p_{\delta}}(\tr(p_{\delta}))$. Then $u_{\alpha, i } = t_{\alpha,\eps}$.

Putting all together, we get  
\[
p_{\delta}^{\langle \tr(p_\delta) \concat \langle i \rangle\rangle} \Vdash \name{\eta}(\alpha) = i \wedge \name{d}_\alpha = u_{\alpha,i} = t_{\alpha, \eps} = \xname_{\delta} = \name{\xname} \rest \delta,
\]
and $p_{\delta}^{\langle \tr(p_\delta) \concat \langle i \rangle\rangle} = q$ witnesses Statement~\eqref{aim}.

Part (3) is proved in the proof of \autoref{cor2}.
\end{proof}

\nothing{ 
Now for properness:
Let $N \prec (H(\chi),  \in )$ be of size $\kappa$ be such that ${}^{\kappa >} N \subseteq N$ and $\qa$, $p \in N$. Let $\seq{I_\eps} {\eps < \kappa}$ be an enumeration of the open dense subsets of $\qa$ that are elements of $N$.
We assume that for any $q \in I_\alpha$ forces a value in the ground model to $\name{\tau}(\alpha)$. We show that there is an $(N,\qa)$-generic condition $q \geq p$.

Now by induction on $\eps< \kappa$ we choose conditions $p_\eps$, and sets $\{a_{s \concat \langle i \rangle} : s \in \spl_\eps(p_\eps), i \in \osucc_{p_\eps}(s)\} \subseteq {}^{(\eps +1)} \kappa$  with the following properties:
\begin{enumerate}
\item[(1)] $p_0 = p$.
\item[(2)] $p_\eps \in N$.
\item[(3)] If $\eps < \delta$, then $p_\eps \leq_\eps p_{\delta}$.
\item[(4)]  At limits $\eps$, $p_\eps = \bigcap \{p_\delta: \delta < \eps\}$.
\item[(5)] if $s \in \spl_\eps(p_\eps)$, then for every $i \in \osucc_{p_\eps}(s)$, the condition $p_{\eps+1}^{\langle s \concat \langle i \rangle\rangle} $ is in $I_\eps$ and forces
$\name{\tau} \rest (\eps+1) = a_{s \concat \la i \ra}$. 
\end{enumerate}
In the end the fusion $q = \bigcap\{p_\eps: \eps < \kappa\} = \bigcup\{\cl_\eps(p_\eps) : \eps < \kappa\}$ is an $(N, \qa)$-generic condition. For each $\eps < \kappa$, we have 
for any $s \in \spl_\eps(q)= \spl_\eps(p_{\eps})$, $i \in \osucc_{p_\eps}(s)$, $q^{\langle s \concat \langle i \rangle\rangle} \geq p_{\eps+1}^{\langle s \concat \langle i \rangle\rangle} $. The condition $p_{\eps+1}$ forces that $\name{\tau}\rest(\eps+1)$ is one of the  values in
\[
K_\eps= \{a_{s \concat \la i \ra} : s \in \spl_\eps(q)= \spl_\eps(p_{\eps}), i \in \osucc_{p_{\eps}}(s)\}
\] 
that are given by  $p_{\eps+1}^{\langle s \concat \langle i \rangle\rangle} $, $s \in \spl_\eps(q)= \spl_\eps(p_{\eps})$, $i \in \osucc_{p_{\eps}}(s)$.
 For any $\eps$, the stronger condition $q$ forces this. By $\kappa^{< \kappa} = \kappa$, we have $|K_\eps| \leq \kappa$.
By elementarity, set $K_\eps$ is an element of $N$. Since $|\bigcup\{K_\eps : \eps < \kappa\} | \leq \kappa$, the forcing
preserves $\kappa^+$ as a cardinal.
 By elementarity, the set $K_\eps$ and also the set
 \[
P_\eps = \{p_{\eps+1}^{\langle s \concat \langle i \rangle\rangle} : s \in \spl_\eps(q)= \spl_\eps(p_{\eps}), i \in \osucc_{p_{\eps}}(s)\}
\]
 are elements of $N$. Since $|K_\eps|, |P_\eps| \leq \kappa$ and $N$ is an elementary submodel of size $\kappa$, the set $P_\eps$ is a subset of $N$.
 Hence $q \Vdash \name{\bG} \cap I_\eps \cap N \neq \emptyset$.
 Each proper initial segment of $\name{\tau}$ is forced by $q$ to be an element of $N$.
 \end{proof}

}

This concludes the proof of \autoref{main_theorem_3}.

\begin{remark}  The forcing adds a $\kappa$-Cohen real $\mathbb C_\kappa$. This is shown in \cite{BrendleBrooke-TaylorFriedmanMontoya}. So there is 
a $\mathbb C_\kappa$-name of a diamond sequence in $V[{\mathbb C}_\kappa]$.
\end{remark}

\section{On $\qbW$ and Successor Cardinals $\kappa$}\label{S6}

Now we work with $\kappa^{<\kappa} = \kappa \geq \aleph_1$ and allow $\kappa$ to be a successor cardinal. We present another type of name of a diamond. The technique works for a weakening of the demands on splitting nodes in forcing conditions. The diamonds guess at approachable ordinals. Therefore the approachability ideal becomes important. We show that under $\kappa^{< \kappa} > \kappa$ the forcing adds a collapse from $\kappa^{<\kappa}$ to $\kappa$. Our collapsing technique is different from \cite[Section 4]{MdSh:1191}.
The results in this section pertain to any of the mentioned tree forcings and their $W$-variants as given by the pattern \autoref{l2}. For simplicity, we focus on $\qbW$.

\begin{definition}\label{l2}
Let $\kappa = \cf(\kappa) > \omega$ and let $W \subseteq \kappa$ be stationary in $\kappa$. We let $\bQ = \qbW$ be the forcing notion that is defined as follows
\begin{enumerate}
\item[(A)] $p \in \qbW$ if 
\begin{enumerate}
\item[(a)] $p$ is a non-empty subtree of ${}^{\kappa>} 2$. 
\item[(b)] $p$ is closed under unions of $\triangleleft$ increasing sequences of lengths $(< \kappa)$. We say $p$ is $(<\kappa)$-closed.
\item[(c)] The set of spitting nodes is dense. That is for any $\eta \in p$ there is $\nu \in p$ with $\eta \trl \nu$ and $\nu \concat \la 0 \ra, \nu \concat \la 1 \ra \in p$.
\item[(d)] If $\delta$ is a limit ordinal and $\seq{ \eta_\eps}{\eps < \delta}$ is a $\triangleleft$-increasing sequence with $\bigwedge_{\eps<\delta} \eta_\eps \in \Split(p)$ and $\bigcup_{\eps<\delta} \lg (\eta_\eps) \in W$, then $\bigcup_{\eps < \delta} \eta_\eps \in \Split(p)$.
\end{enumerate}
\item[(B)] $p \leq q$ if $p \supseteq q$.
\item[(C)] The generic is $\name{\eta} = \bigcup\{\tr(p) : p \in \name{\bG}_{\bQ}\}$.
\end{enumerate}
\end{definition}

\begin{fact}\label{l3} Let $W \subseteq \kappa$ be stationary.
 \begin{enumerate}
\item[1)]
$\qbW$ is closed under intersections of increasing chains of length $< \kappa$.
\item[2)] $\bQ \Vdash \name{\eta} \in {}^\kappa 2$ and $\bV[\name{\eta}] = \bV[\name{\bG}_{\bQ}]$. 
\end{enumerate}
\end{fact}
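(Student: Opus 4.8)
The plan is to check that for a $\leq$-increasing (i.e.\ $\subseteq$-decreasing) chain $\langle p_\eps : \eps<\lambda\rangle$ in $\qbW$ with $\lambda<\kappa$, the tree $q:=\bigcap_{\eps<\lambda}p_\eps$ satisfies clauses (a)--(d) of \autoref{l2}, and then to read off part 2) from the coherence of trunks. If the chain has a strongest element then $q$ equals it and there is nothing to prove, so assume $\lambda$ is a limit ordinal. Clauses (a), (b), (d) are routine. For (a): $q$ is downward closed as an intersection of downward closed trees, and $\langle\rangle\in q$, so $q\neq\emptyset$. For (b): a $\triangleleft$-increasing sequence $\langle\nu_i:i<\theta\rangle$, $\theta<\kappa$, of nodes of $q$ lies in every $p_\eps$, so $\bigcup_i\nu_i\in p_\eps$ by the $(<\kappa)$-closure of $p_\eps$, hence $\bigcup_i\nu_i\in q$. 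For (d): a splitting node of $q$ splits in every $p_\eps$ (as $q\subseteq p_\eps$), so $\Split(q)\subseteq\Split(p_\eps)$; thus a $\triangleleft$-increasing sequence of nodes in $\Split(q)$ with union-length in $W$ is, in each $p_\eps$, a sequence witnessing clause (d) of $p_\eps$, and the common union lies in $\Split(p_\eps)$ for all $\eps$, hence in $\Split(q)$.

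The real content is clause (c), density of splitting in $q$, and I expect this to be the main obstacle, since splitting is not preserved level by level under intersection and a common splitting node must be re-created in a limit. I would first record a monotonicity lemma: if $p'\subseteq p$ are subtrees and $u\in p'$, then the $\trianglelefteq$-least splitting node of $p'$ above $u$ extends that of $p$ above $u$ (below its least splitting node a tree is linear, so a splitting node of the smaller tree can lie neither strictly below nor off the trunk of the larger one). Given $\eta\in q$, I then build a continuous $\triangleleft$-increasing sequence $\langle u_\gamma:\gamma<\kappa\rangle$ with $u_0=\eta$ and $u_\gamma\in q$ throughout. At a successor, let $t_\eps$ be the least splitting node of $p_\eps$ above $u_\gamma$ (it exists by perfectness of $p_\eps$); the monotonicity lemma makes $\langle t_\eps:\eps<\lambda\rangle$ a $\trianglelefteq$-increasing sequence, and I put $u_{\gamma+1}=\bigcup_{\eps<\lambda}t_\eps$. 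For each $\eps_0$ the tail $\langle t_\eps:\eps_0\leq\eps<\lambda\rangle$ lies in $p_{\eps_0}$, so clause (b) of $p_{\eps_0}$ gives $u_{\gamma+1}\in p_{\eps_0}$; as $\eps_0$ is arbitrary, $u_{\gamma+1}\in q$. At limits I take unions, which remain in $q$ by clause (b) of each $p_\eps$. If some $u_\gamma$ already splits in every $p_\eps$ we are done; otherwise the lengths $\lg(u_\gamma)$ strictly increase.

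The map $\gamma\mapsto\lg(u_\gamma)$ is continuous and increasing, so $D=\{\delta:\lg(u_\delta)=\delta\}$ is a club, and since $W$ is stationary I may pick a limit $\delta\in W\cap D$. For each $\eps<\lambda$ and $\gamma<\delta$ the node $t_\eps^{(\gamma)}$ (least splitting node of $p_\eps$ above $u_\gamma$) satisfies $u_\gamma\trianglelefteq t_\eps^{(\gamma)}\triangleleft u_{\gamma+1}\trianglelefteq u_\delta$; these are $\triangleleft$-increasing in $\gamma$, cofinal in $u_\delta$, and all lie in $\Split(p_\eps)$, while $\lg(u_\delta)=\delta\in W$. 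Clause (d) of $p_\eps$ then gives $u_\delta\in\Split(p_\eps)$, and since this holds for every $\eps$, $u_\delta\in\Split(q)$ with $u_\delta\trianglerighteq\eta$. This is exactly where the $W$-closure (d) and the stationarity of $W$ are consumed, and completes clause (c), hence part 1).

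For part 2) the key is coherence of trunks. For $p,p'\in\name{\bG}$ with a common extension $p''$, the monotonicity lemma gives $\tr(p),\tr(p')\trianglelefteq\tr(p'')$, so $\{\tr(p):p\in\name{\bG}\}$ is a $\trianglelefteq$-chain and $\name{\eta}$ is a well-defined partial function into $2$; a density argument (below any $p$, passing to $p^{\la s\ra}$ for a splitting node $s$ of length $>\alpha$ forces $\lg(\tr)>\alpha$) shows $\dom(\name{\eta})=\kappa$, so $\bQ\Vdash\name{\eta}\in{}^\kappa 2$. Visibly $\bV[\name{\eta}]\subseteq\bV[\name{\bG}]$, and for the converse I would prove $\bG=\{p\in\bQ:\name{\eta}\in[p]\}$ in $\bV[\bG]$; the inclusion $\subseteq$ is immediate from $\tr(p)\trianglelefteq\eta$ for $p\in G$. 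For $\supseteq$, the delicate point is that conditions may share branches, so one cannot argue from disjoint bodies; instead I use that if $r\Vdash\name{\eta}\in[\check p]$ then every $r'\leq r$ has $\tr(r')\in p$ (because $r'\Vdash\name{\eta}\rest\lg(\tr(r'))=\tr(r')\in\check p$, a statement about a ground-model node). Applying this to $r^{\la s\ra}$ for $s\in r$ shows the least splitting node of $r$ above any node lies in $p$, whence $r\subseteq p$ and $r\leq p$; so $r\perp p$ forces $\name{\eta}\notin[\check p]$. Since $p\notin G$ yields some $r\in G$ with $r\perp p$, we conclude $\eta\notin[p]$, giving $\supseteq$ and therefore $\bV[\bG]\subseteq\bV[\name{\eta}]$.
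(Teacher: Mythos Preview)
Your proof is correct and follows the same overall strategy as the paper: clauses (a), (b), (d) are routine, and for (c) one produces above the given node a long chain in $q$ along which each $p_\eps$ has cofinally many splitting nodes, then invokes clause (d) together with the stationarity of $W$ to locate a level at which all $p_\eps$ split simultaneously. The packaging differs slightly. The paper first proves an auxiliary statement $(\ast')$ by walking up trunks, then derives from it a full $\kappa$-branch $b$ through $q$ with cofinally many $\Split(p_\alpha)$-nodes for every $\alpha$, and finally intersects the $\gamma$ many clubs $\acc^+(\{\lg(t):t\in b\cap\Split(p_\alpha)\})$ with $W$. Your single $\kappa$-length construction $u_{\gamma+1}=\bigcup_{\eps<\lambda}(\text{least splitting node of }p_\eps\text{ above }u_\gamma)$, followed by choosing $\delta\in W$ in the fixed-point club $\{\delta:\lg(u_\delta)=\delta\}$, reaches the same conclusion more directly and avoids the intermediate $(\ast')$. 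For part 2) the paper simply cites \cite[Proposition 1.2]{MdSh:1191} for the identity $\bG=\{p:\eta\in[p]\}$; your self-contained argument via ``$r\Vdash\name{\eta}\in[\check p]$ implies $r\subseteq p$'' is the standard one and is essentially what that reference contains.

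One notational slip: the paper uses the Israeli convention ($p\leq q$ means $q$ is stronger, Definition~\ref{l2}(B)), so your ``every $r'\leq r$'' and ``$r\leq p$'' should read $r'\geq r$ and $r\geq p$.
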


\begin{proof} 1) Let $\seq{p_\alpha}{\alpha< \gamma}$ be an increasing sequence of conditions and $\gamma < \kappa$. The intersection $q = \bigcap_{\alpha< \gamma} p_\alpha$ has properties \autoref{l2}(a), (b) and (d). 
Since $\emptyset \in q$, it is non-empty. We have to show that $q$ is a perfect tree. We first show
\begin{enumerate}
\item[($\ast$)]
For any $s \in q$,  $q$ has a branch $b$ containing $s$ and containing cofinally many splitting nodes in $p_\alpha$ for each $\alpha < \gamma$.
\end{enumerate}

Since $q$ is $(< \kappa)$-closed, it suffice to show:
\begin{enumerate}
\item[($\ast'$)]
For any $s \in q$ there is a strict extension $t \triangleright s$, $t \in q$ and $t \in \Split(q)$ or for any $\alpha < \gamma$, $t $ is a limit of splitting nodes in $p_\alpha$.
\end{enumerate}

Let $s \in q$ be given. We choose $i \in 2$ such that $s' = s \concat \la i \ra \in q$.
First case: $s' \in \Split(p_\alpha)$ for any $\alpha < \gamma$. Then $s' \in \Split(q)$ and we are done.
Second case: There is $j \in 2$, $\alpha_0$ such that $ s' \concat \la j \ra \trl \tr(p_{\alpha_0})$. We let $s_0 = \tr(p_{\alpha_0})$.

Given  $\seq{s_j= \tr(p_{\alpha_i})}{j < i\leq \gamma}$ that is $\trl$-increasing and $\seq{\alpha_j}{j < i}$ that is increasing, in the limit case, we let $s_i$ be the union of the $s_j$ and $\alpha_i$ be the supremum of the $\alpha_j$. In the successor case $i = j+1< \gamma$, if $s_j \in \Split(p_\alpha)$, $\alpha < \gamma$, then ($\ast'$) is proved with $t = s_j$. If not, we let $s_i = \tr(p_{\alpha_i}) \triangleright s_j$ for the minimal $\alpha_i > \alpha_j$ such $\tr(p_{\alpha_i}) \triangleright s_j$.
If for any $i < \gamma$, we are always in the second case, then $t=s_\gamma$ is as in $(\ast')$.

So $(\ast')$ and $(\ast)$ are proved and hence for any $t \in q$ there is some $b \in [q]$ with $t \in b$ such that for any $\alpha < \gamma$ there are cofinally many splitting nodes of $p_\alpha$ on this branch $b$.

We show that $q$ fulfils \autoref{l2}(c). Let $t \in q$.  We have to show that there is a splitting node above $t$. We take a branch $b$ containing  $t$ as above.
Now by \autoref{l2}(c), for each $\alpha < \gamma$, the  set
$W_{b,\alpha}= \{\beta \in \kappa : (\exists t' \in b \cap \Split(p_\alpha))(\dom(t') = \beta)\}$ is a  superset of a club $C_{\alpha}$ in $\kappa$ intersected with $W$. We can let $C_\alpha = \acc^+(W_{b,\alpha})$.  Now the intersection of the $W_{b,\alpha}$, $\alpha < \gamma$, is a superset of $\bigcap\{C_\alpha : \alpha < \gamma\} \cap W$.
Hence there is a splitting node $t' \in \Split(q)$ on the branch $b$ with $t' \trianglerighteq t$, namely any $t' \in b$ with $\dom(t') \in \bigcap\{C_\alpha : \alpha < \gamma\} \cap W \cap [\dom(t), \kappa)$ has these properties.

2)  Let $\bG$ be $\bQ$-generic over $\bV$. Then $\eta= \bigcup\{\tr(p) : p \in \bG\}$ is a function from $\kappa$ to $2$, since for any $p \in \bQ$ and any $t \in p$ also the subtree $p^{\langle t \rangle}$ is a condition, and if $t$ and $t'$ are incompatible nodes in $p$, the conditions $p^{\langle t \rangle}$ and $p^{\langle t'\rangle}$ are incompatible.  The generic branch $\eta$ contains the full information about ${\bG}$ since for any generic filter $\bG$ we have for any $p$: $p \in \bG$ iff $\eta \in [p]$.
For a detailed proof see \cite[Proposition 1.2]{MdSh:1191}.
\end{proof}

\begin{fact}\label{l4} Assume $\kappa > \aleph_0$ is regular and $W \subseteq \kappa$ is stationary and $\bQ = \qbW$. 
If $\kappa= \kappa^{<\kappa}$, then $\bQ$ is proper.
\end{fact}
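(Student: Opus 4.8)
The plan is to run the standard $\kappa$-Sacks fusion, adapted to the weaker splitting demand of $\qbW$, using the $(<\kappa)$-closure from \autoref{l3}(1) and the fusion order $\leq_\eps$ of \autoref{axiomA}. Since $\kappa^{<\kappa}=\kappa$, for large $\theta$ the set of $N\prec(H(\theta),\in,<^*_\theta)$ of size $\kappa$ with ${}^{<\kappa}N\subseteq N$ is club in $[H(\theta)]^\kappa$; so by \autoref{kappaproper} it suffices to fix one such $N$ with $\kappa,W,\bQ,p\in N$ and to build an $(N,\bQ)$-generic condition $q\geq p$. I would first enumerate as $\seq{I_\eps}{\eps<\kappa}$ the dense subsets of $\bQ$ that lie in $N$; there are at most $|N|=\kappa$ of them, and a single appearance of each suffices. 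I also record that, because ${}^{<\kappa}N\subseteq N$, every node of ${}^{\kappa>}2$ belongs to $N$, a fact used to place the genericity witnesses inside $N$.

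Next I would construct a fusion sequence $\seq{p_\eps}{\eps<\kappa}\subseteq N$ by recursion, exactly mirroring \autoref{claim2.21}: put $p_0=p$; at limits set $p_\eps=\bigcap_{\zeta<\eps}p_\zeta$, which is a condition by \autoref{l3}(1) and lies in $N$ since $\seq{p_\zeta}{\zeta<\eps}\in N$; and at successors choose $p_{\eps+1}$ with $p_\eps\leq_\eps p_{\eps+1}$, freezing the $\eps$-th splitting level and replacing, for each $s\in\Split(p_\eps)\cap\spl_\eps(p_\eps)$ and each $i\in 2$, the subtree $p_\eps^{\la s\concat\la i\ra\ra}$ by a stronger one $r_{s,i}\in I_\eps$, then gluing. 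The gluing is over the at most $2^{|\eps|}\leq\kappa$ pairwise $\triangleleft$-incomparable nodes $s\concat\la i\ra$, so it uses nothing beyond \autoref{l3}; taking $p_{\eps+1}$ to be $<^*_\theta$-least with these properties keeps it in $N$. Setting $q=\bigcap_{\eps<\kappa}p_\eps$, genericity follows: the $\eps$-th splitting level of $q$ equals $\spl_\eps(p_{\eps+1})$, the generic branch through $q\subseteq p_{\eps+1}$ runs through a unique $s\concat\la i\ra$ with $s\in\spl_\eps(q)$, and then $p_{\eps+1}^{\la s\concat\la i\ra\ra}=r_{s,i}\in I_\eps\cap N$ is forced into $\name{\bG}$, so $q\Vdash\name{\bG}\cap I_\eps\cap N\neq\emptyset$.

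The hard part will be verifying that the length-$\kappa$ intersection $q$ is a condition of $\qbW$, since \autoref{l3}(1) only delivers conditions for intersections of length $<\kappa$. Here the freezing $p_\eps\leq_\eps p_{\eps+1}$ is decisive: each splitting level stabilizes, so $\spl_\eps(q)=\spl_\eps(p_{\eps+1})$ for every $\eps$, and the bookkeeping built into $\cl_\eps$ and $\spl_\eps$ in \autoref{axiomA} makes these frozen splitting nodes dense, yielding \autoref{l2}(c). For the $W$-closure \autoref{l2}(d), I would take a $\triangleleft$-increasing sequence of splitting nodes of $q$ whose lengths have supremum $\lambda\in W$; its union $\eta$ lies in $q$ by $(<\kappa)$-closure, and each node of the sequence is a splitting node of every sufficiently late $p_\eps$, so \autoref{l2}(d) applied to that single $p_\eps$ puts both successors of $\eta$ into $p_\eps$, hence into $q$, giving $\eta\in\Split(q)$. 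Non-emptiness and $(<\kappa)$-closure of $q$ are inherited from the $p_\eps$. I expect the stationarity of $W$ to enter only through \autoref{l3}, where it guarantees that the frozen splitting structure stays dense along branches.
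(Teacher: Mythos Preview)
Your proposal is correct and follows precisely the approach the paper itself indicates: the paper's proof is a two-line remark that this is ``a routine fusion construction'' whose details are ``similar to the proof of \autoref{claim2.21}'', and you have spelled out exactly that fusion argument, using the $(<\kappa)$-closure of \autoref{l3}(1) and the order $\leq_\eps$ of \autoref{axiomA}. The only minor imprecision is in your verification of clause~(d) for $q$: a splitting node of $q$ is a splitting node of \emph{every} $p_\eps$ (not just sufficiently late ones), so one applies \autoref{l2}(d) inside each $p_\eps$ separately to conclude that both successors of the limit node lie in every $p_\eps$ and hence in $q$.
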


 \begin{proof}  This is proved by a routine fusion construction. The proof that $\bQ$ is $(<\kappa)$-closed and has $\kappa$-long fusion sequences with limits and is similar to the proof of \autoref{claim2.21}. \end{proof}

We introduce some combinatorics that will be useful for defining names.

\begin{definition}\label{treeembedding}
Suppose that $\delta \in \kappa$ and $\cf(\delta) = \cf(\sigma)$.
\begin{enumerate}
\item[(A)]  A function
$f \colon {}^{\sigma >} 2 \to {}^{\delta>} 2$ is called 
a $\sigma$-tree embedding of height $\delta$, if the following holds:
\begin{enumerate}
\item[(a)] for any $s, t \in {}^{\sigma >} 2$, if $s \trl t$, then $f(s) \trl f(t)$. 
\item[(b)] For any $b \in {}^\sigma 2$, $\bigcup\{f(b \rest i) : i < \sigma\} \in {}^\delta 2$.  
\end{enumerate}
We let $\sigma = \lim \seq{\sigma_i}{i < \cf(\sigma)}$ for an increasing continuous sequence. It suffices to know 
$f\rest  \bigcup\{{}^{\sigma_i} 2 : i < \cf(\sigma)\}$ and define the other $f$ values by  a natural modification of (b). 
\item[(B)]
A $\sigma$-tree embedding of height $\delta$ is called one-to-one if in addition
for any $s, t \in {}^{\sigma >} 2$, if $s \perp t$ then $f(s) \perp f(t)$. We write $s \perp t$, if $s$ and $t$ are incomparable (which is the same as incompatible) in $\trl$.
\item[(C)]
Let $f$ be a $\sigma$-tree embedding of height $\delta$. Let $\seq{\sigma_i }{i < \cf(\sigma)}$ be an increasing cofinal sequence in $\sigma$. A sequence $\seq{(\sigma_i,\ell_i)}{i< \cf(\sigma)}$ is a height sequence for $f$, if for any
$i < \cf(\sigma)$, for any $t \in {}^{\sigma_i} 2$, $\dom(f(t)) \in [\ell_i, \ell_{i+1})$. Necessarily $\lim_{i<\cf(\sigma)} \ell_i = \delta$. 
\item[(D)] Given a $\sigma$-tree embedding $f$ of height $\delta$, there is a lift to branches $\bar{f} \colon {}^\sigma 2 \to {}^\delta 2$ given by
$\bar{f}(b) = \bigcup\{f(b\rest \sigma_i) : i < \cf(\sigma)\}$.  
\end{enumerate}
 \end{definition}
 
\begin{remark}\hfill
\begin{enumerate}
\item[(a)]
Height sequences  do not need to exist. If $2^{<\sigma} < \kappa$ and $\kappa$ is regular, then for $i < \cf(\sigma)$, the heights in $\{f(t): t \in 2^{\sigma_i}\}$ are bounded. In our inductive construction of a $\sigma$-tree of conditions in the proof of \autoref{l5}, we will naturally define two tree embeddings with the same height sequence. These are the $(f_1,f_2)$ in the end of the proof of \autoref{l5}. In the proof of \autoref{l4(2)} there are just fronts of heights.

\item[(b)]
We do not require that the tree emdeddings fulfil $f(s \cap t) = f(s) \cap f(t)$. The righthand side might be longer. \autoref{l2}(A) entails that for limits of splitting nodes in a condition $p$ with domain not in $W$ we cannot expect prompt splitting. 
\end{enumerate}\end{remark}

The following lemma is used for names of diamonds and for names of collapsing functions.

\begin{lemma}[Bernstein Lemma]\label{Bernstein_lemma}
We assume that $\kappa = \kappa^{<\kappa}$ and $2^\sigma = \kappa$ and $\kappa^{2^{<\sigma}} = \kappa$. For each $\delta \in  \kappa \cap \cof(\cf(\sigma))$ we let 
\begin{equation*}\label{Fdelta}
\begin{split}
 \cF_{\sigma,\delta} = & 
\{(f_1,f_2) : f_1, f_2 \mbox{ are $\sigma$-tree embeddings}\\
&\mbox{ of height $\delta$ and $f_1$ is one-to-one}\}.
\end{split}
\end{equation*}

Then there is some $h_\delta \colon {}^\delta 2 \to {}^\delta 2$ such that
\begin{equation*}\begin{split}
&\forall (f_1, f_2) \in \cF_{\sigma,\delta}) \Bigl((\exists \eta \in {}^\sigma 2)\\
&(h_\delta(\bar{f_1}(\eta)) = \bar{f_2}(\eta) \wedge (\forall \alpha \in {}^\delta 2) (\exists \eta' \in {}^\sigma 2) h_\delta(\bar{f_1}(\eta')) = \alpha)\bigr)\Bigr).
\end{split}
\end{equation*}
\end{lemma}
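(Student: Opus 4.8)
The plan is to construct $h_\delta$ by a Bernstein-style transfinite recursion of length $\kappa$, approximating it by partial functions of size $<\kappa$ and meeting one requirement at each step. First I would record the cardinal arithmetic that makes the bookkeeping go through. Since $\kappa=\kappa^{<\kappa}$ the cardinal $\kappa$ is regular and $2^{|\alpha|}\le\kappa$ for every $\alpha<\kappa$; consequently $|{}^{\delta>}2|=\sum_{\alpha<\delta}2^{|\alpha|}\le\kappa$ and $2^\delta\le\kappa$. From $\kappa^{2^{<\sigma}}=\kappa$ we get $2^{<\sigma}<\kappa$, and since a $\sigma$-tree embedding of height $\delta$ is a function from ${}^{\sigma>}2$ (of size $2^{<\sigma}$) into ${}^{\delta>}2$ (of size $\le\kappa$), there are at most $\kappa^{2^{<\sigma}}=\kappa$ of them, whence $|\cF_{\sigma,\delta}|\le\kappa$. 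Finally, if $f_1$ is one-to-one then its lift $\bar{f_1}\colon{}^\sigma 2\to{}^\delta 2$ is injective: distinct $\eta\ne\eta'$ split at some level $i$, so $f_1(\eta\rest i)\perp f_1(\eta'\rest i)$ and hence $\bar{f_1}(\eta)\ne\bar{f_1}(\eta')$. Thus $|\range\bar{f_1}|=2^\sigma=\kappa$ for every $f_1$ occurring in a pair of $\cF_{\sigma,\delta}$. This last fact is what guarantees a large supply of ``fresh'' branches throughout the construction.

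Next I would reduce the displayed formula to two families of requirements. Because the conjunct $(\forall\alpha\in{}^\delta 2)(\exists\eta'\in{}^\sigma 2)\,h_\delta(\bar{f_1}(\eta'))=\alpha$ does not mention $\eta$, the statement to be proved is equivalent, for each $(f_1,f_2)\in\cF_{\sigma,\delta}$, to the conjunction of a \emph{diagonal} requirement, that there be $\eta$ with $h_\delta(\bar{f_1}(\eta))=\bar{f_2}(\eta)$, and a \emph{surjectivity} requirement, that $\eta'\mapsto h_\delta(\bar{f_1}(\eta'))$ map onto ${}^\delta 2$. There is one diagonal requirement per pair (at most $\kappa$ of them) and one surjectivity requirement per $(f_1,\alpha)$ (at most $|\cF_{\sigma,\delta}|\cdot 2^\delta\le\kappa$ of them), so altogether at most $\kappa$ requirements; fix an enumeration $\seq{R_\xi}{\xi<\kappa}$.

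I would then build an increasing chain $\seq{h^\xi}{\xi\le\kappa}$ of partial functions ${}^\delta 2\rightharpoonup{}^\delta 2$ with $|\dom h^\xi|\le|\xi|$, taking unions at limits and handling $R_\xi$ at stage $\xi+1$. If $R_\xi$ is the diagonal requirement for $(f_1,f_2)$, then since $|\range\bar{f_1}|=\kappa$ while $|\dom h^\xi|<\kappa$ there is $\eta$ with $\bar{f_1}(\eta)\notin\dom h^\xi$, and I set $h^{\xi+1}=h^\xi\cup\{(\bar{f_1}(\eta),\bar{f_2}(\eta))\}$; if $R_\xi$ is the surjectivity requirement for $(f_1,\alpha)$, I pick similarly $\eta'$ with $\bar{f_1}(\eta')\notin\dom h^\xi$ and set $h^{\xi+1}=h^\xi\cup\{(\bar{f_1}(\eta'),\alpha)\}$. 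As $\kappa$ is regular and each step adds a single point, $|\dom h^\xi|<\kappa$ persists for all $\xi<\kappa$, so a fresh branch is always available and no earlier assignment is ever overwritten. Letting $h_\delta$ be any total extension of $\bigcup_{\xi<\kappa}h^\xi$ to ${}^\delta 2$, every requirement is satisfied.

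The only genuine work is the counting, not any delicate combinatorial obstruction: the argument hinges on the contrast between the \emph{large} range $\range\bar{f_1}$ of size $\kappa$ (supplied by $2^\sigma=\kappa$ together with the one-to-one-ness of $f_1$) and the \emph{small} families $\cF_{\sigma,\delta}$ and ${}^\delta 2$, so that the full list of requirements has length $\kappa$ and fresh branches never run out. The hypothesis $\kappa^{2^{<\sigma}}=\kappa$ is used precisely to bound the number of $\sigma$-tree embeddings by $\kappa$; everything else is a routine freshness check at each successor stage.
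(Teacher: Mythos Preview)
Your proposal is correct and follows essentially the same Bernstein-style recursion as the paper: enumerate the at most $\kappa$ requirements (the paper bundles them as triples $(f_1,f_2,x)$ and handles a diagonal and a surjectivity instance simultaneously at each step, whereas you split the two types and treat one per step, a purely cosmetic difference), and at each stage use that $|\range\bar{f_1}|=2^\sigma=\kappa$ exceeds the size $<\kappa$ of the partial function built so far to pick a fresh argument. Your counting arguments and the injectivity of $\bar{f_1}$ are exactly what the paper uses (implicitly or explicitly), and your final arbitrary extension of $h_\delta$ matches the paper's last sentence.
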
 

\begin{proof}
For $\delta \in \kappa \cap \cof(\cf(\sigma))$ we have $|\cF_{\sigma,\delta}| \leq \kappa$.
Note that here we use $2^{<\sigma} <\kappa$, $2^{<\delta} \leq \kappa$  and $\kappa^{2^{<\sigma}} = \kappa^{<\kappa} = \kappa$.

 We enumerate 
\[\{ (f_1,f_2,x) : (f_1, f_2) \in \cF_{\sigma,\delta},
x \in {}^\delta 2\}
\]
as  
$\seq{(f_1^\alpha, f_2^\alpha,x_\alpha)}{\alpha < \kappa}$ such that each triple appears $\kappa$ often.
At step $\alpha$ we have to take care of $(f_1^\alpha, f_2^\alpha)$ and we have to ensure that $x_\alpha$ gets into the range of $h_\delta\circ \bar{f_1^\alpha}$.

We define $\eta_\alpha, z_\alpha$ and $h_\delta (\bar{f_1^\alpha}(\eta_\alpha)):=\bar{f_2^\alpha}(\eta_\alpha)$
and $h_\delta(\bar{f_1^\alpha}(z_\alpha)) := x_\alpha$ by induction on $\alpha$.
Suppose that $\seq{(\eta_\beta, z_\beta, h_\delta(f_1^\beta(\eta_\beta)), h_\delta(\bar{f^\alpha_1}(z_\beta))}{\beta < \alpha}$ is defined.

Since $\bar{f_1^\alpha}$ is one-to-one, there is some $\eta=\eta_\alpha \in {}^\sigma 2 \setminus \{\eta_\beta : \beta < \alpha\}$ such that $\bar{f_1^\alpha}(\eta) \neq
\bar{f_1^\beta}(\eta_\beta)$ and for each $\beta < \alpha$. 
We let $h_\delta(\bar{f^\alpha_1}(\eta_\alpha)) = \bar{f_2^\alpha}(\eta_\alpha)$ and
we can pick some $z_\alpha \in {}^\sigma 2 \setminus\{\eta_\beta: \beta \leq \alpha\}$  and let $h_\delta(\bar{f_1^\alpha}(z_\alpha)) = x_\alpha$. Here we use that $f_1^\alpha$ is one-to-one. If after the induction the domain of $h_\delta$ is not yet the full set ${}^\delta 2$, we can define $h_\delta$ at the remaining arguments in an arbitrary manner.
\end{proof}

\begin{remark}\label{l6}
 \autoref{l5} and the results in previous sections are incomparable:  
E.g., for $\kappa = \lambda^+$ with $\cf(\lambda) = \aleph_0$ and $S \subseteq \kappa \cap \cof(\aleph_0)$, $S \in \checki[\kappa]$, we can apply \autoref{l5} but not \autoref{b14},
whereas if $S \not\in \checki[\kappa]$ we can apply \autoref{b14} but not \autoref{l5}. We work at $\delta \in S$ with an induction of length $\cf(\delta)$ in the case of approachability,  or of length $\delta$ in general.
\end{remark}

{\bf Proof of \autoref{l5}}\\[-20pt]
\begin{proof} Recall $S \in \checki[\kappa]$, $S \subseteq \kappa \cap \cof(\cf(\sigma))$, $2^{<\sigma} < \kappa$, $2^\sigma = \kappa= \kappa^{<\kappa}\geq \aleph_1$.
For $\delta \in S$ we let $h_\delta$ be as in the \autoref{Bernstein_lemma}.   We define a $\bQ$-name $\name{\bar{\nu}} = \seq{\name{\nu_\delta}}{\delta \in S}$ by 
\begin{equation*}\label{diam4}
\bQ \Vdash \name{\nu}_\delta = h_\delta(\name{\eta}\rest \delta).
\end{equation*}
 We show that  $\bQ$ forces that
 $\name{\bar{\nu}}$  is a $\diamondsuit(S)$-sequence.
 Let 
 \begin{equation*}\label{odot6.1}
 p \Vdash \name{x} \in {}^\kappa 2 \wedge \name{D}\mbox{ is a club in }\kappa. 
 \end{equation*}

We have to find a $\delta \in S$ and some $q \geq p$ such that
\begin{equation}\label{odot6.2}
q \Vdash \delta \in \name{D} \wedge \name{\nu}_\delta = \name{x} \rest \delta.
\end{equation}

 Suppose that $\kappa^{<\kappa} = \kappa > \aleph_0$, $2^\sigma = \kappa$ and $2^{<\sigma} < \kappa$. 

Let for $\delta \in S$, $\sigma$ be as in the assumptions. We let $\seq{\sigma_i}{i<\cf(\sigma)}$ be a cofinal sequence in $\sigma$.

As $S \in \checki[\kappa]$ and $S \subseteq \kappa \cap \cof(\tau)$ there is $(E, \bar{A})$ such that
\begin{enumerate}
\item [$(\odot)_0$] \begin{enumerate}
\item[(a)] $E$ is a club in $\kappa$
 \item[(b)] $\bar{A} = \seq{A_\alpha}{\alpha<\kappa}$,  $A_\alpha \subseteq \alpha$, $A_\alpha$ consists only of successor ordinals,
\item[(c)] For any $\beta \in A_\alpha$ we have $A_\beta = A_\alpha \cap \beta$.
\item[(d)] if $\alpha \in E \cap S$ then $\alpha = \sup(A_\alpha)$ and $\ot (a_\alpha) = \cf(\sigma)$.
\item[(e)] if $\alpha \in \kappa \setminus (E \cap S)$ then $\ot(A_\alpha) < \cf(\sigma)$.
\end{enumerate}
\end{enumerate}

The existence of $\bar{A}$ is derived in the proof of \autoref{club_nacc_charact.} as given in  \cite[Theorem 3.7]{Eisworth_handbook}.

We fix a sequence $\seq{\sigma_i}{i<\cf(\sigma)}$ that is continuously increasing and cofinal in $\sigma$.

We chose by induction on $\alpha < \kappa$ a sequence $\seq{N_\alpha}{\alpha<\kappa}$ such that
\begin{enumerate}
\item [$(\odot)_1$] 
\begin{enumerate}
\item[(a)] $N_\alpha \prec (H(\chi), \in, <^*_\chi)$,
 \item[(b)] $|N_\alpha| < \kappa$,
 \item[(c)] $N_\alpha \cap \kappa \in \kappa$,
 \item[(d)] $N_\alpha$ is $\prec$-increasing and continuous, and $\seq{N_\beta}{\beta \leq \alpha} \in N_{\alpha+1}$,
\item[(e)] ${\bf c} = (\kappa, p, \bar{a}, \seq{A_\delta}{\alpha \in \kappa}, \name{x}, \name{D}, E, S, \seq{\sigma_i}{i < \cf(\sigma)}) \in N_0$.
\item[(f)] For any $\beta < \kappa$, if $\beta \in N_\alpha$, then for any $j < \cf(\sigma)$, ${}^{\sigma_j} \beta \subseteq N_{\alpha+1}$.
Since $2^{<\sigma} < \kappa$, we can add the clause, which will be used to derive $(\odot)_2$ below. 
 \end{enumerate}
 \end{enumerate}

We can assume that $\tau \subseteq N_0$ and hence for any $i < \tau$, $\sigma_i \in N_0$.
Let 
\[C = \{\delta \in E : N_\delta \cap \kappa = \delta\}.
\]
So $C$ is a club of $\kappa$.

For $\delta \in C \cap S$ let $\seq{\gamma_{\delta, i}}{i<\cf(\sigma)}$ list the closure of $A_\delta$ from $(\odot)_0$ in increasing order and let 
\[N_{\delta, i} = N_{\gamma_{\delta, i}}.
\]
Clearly 
\begin{equation}\label{approx_at_delta}
\seq{N_{\delta, \eps}}{\eps \leq i} \in N_{\delta,i+1} \prec N_{\delta,j}
\end{equation} for any $i < j < \sigma$ thanks to the sequence $\bar{A}$ from $(\odot)_0$ and the requirements $(\odot)_1$ for $\alpha = \gamma_{\delta, i}$.

By induction on $i \leq \cf(\sigma)$
we chose a condition $(\bar{p}_i, \bar{x}_i, \bar{\gamma'}_i)$ where $\bar{p}_i = \seq{p_{\sigma_i, \varrho}}{\varrho \in {}^{\sigma_i} 2}$,
such that for any $i<\cf(\sigma)$  the following holds
\begin{enumerate}\item[$(\odot)_{2}$] 
\qquad $p_i$ is the $<^*_\chi$-least element such that 
\begin{enumerate} 

\item[(a)] $\bar{p}_i = \seq{p_{\sigma_i, \varrho}}{\varrho \in {}^{\sigma_i} 2}$. 

(We shall prove that $\bar{p}_i \in N_{\delta,i+1}$ in our construction but it is better  to not state it in our demands.)

\item [(b)] For each $\varrho \in {}^{\sigma_i} 2$, for any $j < i$, $p_{\sigma_j, \varrho \rest \sigma_j} \leq p_{\sigma_i, \varrho}$.

\item[(c)]
 $\bar{x}_i = \seq{x_{\sigma_i, \varrho}}{\varrho \in {}^{\sigma_i} 2}$.

 \item[(d)] For $i < \cf(\sigma)$, $\varrho \in {}^{\sigma_i} 2$, $p_{\sigma_i, \varrho} \Vdash \name{x} \rest \gamma'_{\sigma_i, \varrho} = x_{\eps, \varrho}$
 for some $x_{\sigma_i, \varrho} \in {}^{\gamma_{\sigma_i, \varrho}}2 \cap \bV$.

\item[(e)] for any $\varrho \in {}^{\sigma_i} 2$, $p_{\sigma_i, \varrho} \Vdash\ \gamma'_{\sigma_i, \varrho} \in \name{D}$.

\item[(f)] for any $\varrho \in {}^{\sigma_i} 2$, $p_{\sigma_i, \varrho} \geq p$.

\item[(g)] for any $\varrho \in {}^{\sigma_i} 2$, $\lg(\tr(p_{\sigma_i, \varrho})) \geq \gamma'_{\sigma_i,\varrho}$.

\item[(h)] $\seq{\tr(p_{\sigma_i, \varrho})}{\varrho \in {}^{\sigma_i} 2}$ consists of pairwise $\trl$-incomparable  elements of $p \cap 2^{<\delta}$. 
\item[(i)] for any $\varrho \in {}^{\sigma_i} 2\cap N_{\delta,i+1}$,
 \\$\lg(\tr(p_{\sigma_i, \varrho}))$, $\gamma'_{\sigma_i, \varrho} \in[\sup(N_{\delta,i} \cap \kappa), \sup(N_{\delta,i+1} \cap \kappa))$.
\nothing{ that there are stricly fewer than $\kappa$ many $\varrho$, so the whole tuple is in $N_{i+1}$ by $(\odot)_{3,\sigma_i}$. However, still we do not have $\{p_{\sigma_i, \varrho} : \varrho \in {}^{\sigma_i} 2\} \subseteq N_{i+1}$, since $N_{i+1}$ may be too small. Why do we know $2^{\sigma_i} \subseteq N_{i+1}$?
 I think that the application of the Bernstein Lemma \autoref{Bernstein_lemma} after equation \eqref{f2} 
is a flaw.
\label{stickingout}
}
 \end{enumerate}
\end{enumerate}
We can carry the induction? We can carry the induction since $\bQ$ is $(<\kappa)$-complete.
  More fully, let us prove $(\odot)_{3,i}$ by induction on $i$, 

\begin{enumerate}
\item[$(\odot)_{3,i}$]
\begin{itemize}
\item[•] ${\bf m}_i = \seq{\bar{p}_j, \bar{x}_j, \bar{\gamma'}_j}{j<i}$ exists and is unique and $j<i$ implies ${\bf m}_{j+1} \in N_{\delta,j+1}$ 
and ${}^{\sigma_j} 2$ and the ranges of each of $\bar{p}_j$, $\bar{x}_j$, $\bar{\gamma'}_j$, $j \leq i$ are subsets of $N_{\delta,j+1}$.

\item[•] ${\bf m}_i$ is defined in $(H(\chi), \in , <_\chi^*)$ by a formula $\varphi = \varphi(x, \bar{y})$ with $x $ for ${\bf m}_i$ and $\bar{y} = (y_0, y_1)$ with $y_0 = \seq{N_{\delta, j}}{j < i}$ and $y_1 = {\bf c}$ from $(\odot)_1$(e). 
\end{itemize}

\end{enumerate}

{\bf Case 1} $i = 0$. 

${\bf m}_0 = \langle \rangle$.

{\bf Case 2} $i = j+1$.

  Now $\bar{p}_i$ is the $<_\chi^*$ first ${}^{\sigma_i}2$-tuple of $\bQ$ satisfying clauses $(\odot)_{2}$. As each element of $H(\chi)$ mentioned above is computable from $\seq{N_{\delta, j}}{j < i}$, it belongs as an element to $N_{\delta, i} = N_{\delta,j+1}$  by Equation~\eqref{approx_at_delta}. We  use the specific choice of $\bQ = \qbW$ and not just $(<\kappa)$-complete forcings that force $\name{\eta} \not\in\bV$ but that above each node there is a higher splitting node. This entails that each entry of $\bar{p}_j$ gets ${}^{[\sigma_j,\sigma_i)} 2$-many successors with pairwise incompatible trunks.
By $(\ast)_\alpha$(f) for $\alpha = \gamma_{\delta, i}$ we have $ N_{\gamma_{\delta,i} +1} \subseteq N_{\gamma_{\delta, i +1}}$. Therefore, ${}^{\sigma_i} 2$ and the ranges of $\bar{p}_i$, $\bar{x}_i$, $\bar{\gamma'}_i$ are subsets of $N_{\delta, i+1}$.

{\bf Case 3} $i$ limit.\\
We again use the specific choice of $\bQ = \qbW$ and not just $(<\kappa)$-complete forcings that force $\name{\eta} \not\in\bV$ but that above the limit of splitting nodes there is a higher splitting node.
(This jump does not harm the continuity of $f_1$ from \eqref{f1} but just makes it not necessarily $\wedge$-preserving. Since $W$ is not necessarily closed, such jumps can appear.)
By $(<\kappa)$-completeness of $\qbW$, we have $p_{i, \varrho} = \bigcap_{\zeta<i} p_{\sigma_\zeta, \varrho \rest \zeta}$ in $\qbW$. Also for $\zeta < i$ $\varrho \in N_{\zeta+1}$, $\lg(\tr(p_{\sigma_\zeta, \varrho \rest \zeta})) \in [\sup(N_{\delta,\zeta} \cap \kappa), \sup(N_{\delta,\zeta+1} \cap \kappa)) $ so the sup over all $\zeta < i$, $\varrho \in N_{\delta,\zeta+1}$, is an element of $[\sup(N_{\delta,i} \cap \kappa), \sup(N_{\delta,i+1} \cap \kappa))$.
$p_{i, \varrho} = \bigcup\{ p_{\sigma_\zeta, \varrho \rest \zeta} : \zeta < i\}$.
Moreover 
$p_{i, \varrho}  \Vdash x_{i, \varrho} = \name{x} \rest \gamma_{i, \varrho}$.
We can compute $\bar{p}_i$, $\bar{x}_i$, $\bar{\gamma'}_i$ from $(\seq{N_{\delta, j}}{j < i}, {\bf c})$. 
 At the same time the set ${}^{\sigma_i} 2$ and ranges  of the sequences 
$\bar{p}_i$, $\bar{x}_i$ and $\bar{\gamma'}_i$ are subsets of $N_{\delta, i+1}$ by ($\ast)_{\gamma_{\delta, i}}$(f).
So carried the induction.

For $\varrho \in {}^\sigma 2$, we let 
\[p_{\sigma, \varrho} = \bigcap\{p_{\sigma_i, \varrho \rest \sigma_i} : i < \cf(\sigma)\}.
\]

By $(\odot)_2(e)$, for each $\varrho \in {}^{\sigma} 2$,
\[p_{\sigma, \varrho} \Vdash  \bigcup\{\gamma'_{\sigma_i, \varrho}: i < \cf(\sigma)\} \in \name{D}.
\]

By $(\odot)_2$(f), for any $\varrho \in {}^\sigma 2 $ such that
for any $i< \cf(\sigma)$, $\varrho \rest \sigma_i \in N_{\delta, i+1}$.
\[
\gamma'_{\sigma, \varrho} = \bigcup\{\gamma'_{\eps, \varrho \rest \eps} : \eps < \sigma\}=\sup(N \cap \kappa) = \delta.
\]
By $(\odot)_{3,i}$ for $i < \cf(\sigma)$ , any  $\varrho \in {}^\sigma 2$ fulfils for any $i< \cf(\sigma)$, $\varrho \rest \sigma_i \in N_{\delta, i+1}$.

We define $(f_1, f_2) \in {\mathcal F}_{\sigma, \delta}$ by letting for $i < \cf(\sigma)$, $\varrho \in {}^{\sigma_i} 2$,
\begin{equation*}\label{f1}
f_1(\varrho) =  \tr(p_{\dom(\varrho), \varrho}).
\end{equation*}
\begin{equation*}\label{f2}
f_2(\varrho) = x_{\dom(\varrho), \varrho}.
\end{equation*}
By the definition of the continuation of the tree embeddings we have:
For any $\varrho \in {}^{\sigma} 2$:
$\bar{f}_1(\varrho) =  \tr(p_{\dom(\varrho), \varrho})$,
$\bar{f}_2(\varrho) = x_{\dom(\varrho), \varrho}$, by $\odot_2$(i) we have
$\dom(\varrho)) = \sigma$ and  $\dom(\tr(p_{\dom(\varrho), \varrho})) = \delta$ if for any $i < \cf(\sigma)$, $\varrho \rest \sigma_i \in N_{\delta, i+1}$.

Now by \autoref{Bernstein_lemma}  there is some $\varrho \in {}^\sigma 2$ 
 with for any $i < \cf(\sigma)$, $\varrho  \rest \sigma_i \in N_{\delta, i+1}$
such that 
\[p_{\sigma, \varrho} \Vdash \bar{f}_1(\varrho) = \name{\eta} \rest \delta \wedge h_\delta(\bar{f_1}(\varrho)) = \name{\nu}_\delta = \bar{f_2}(\varrho) = x_{\sigma, \varrho} = \name{x} \rest \delta.
\]
For the very last equality relation we use $(\odot)_{2}$(d). So $q = p_{\sigma, \varrho}$ and $\delta$ are as in \eqref{odot6.2}.
\end{proof}

Now  Kanamori's premise on iterability is true in the one-step extension:

\begin{corollary} We assume $\aleph_1 \leq \kappa = \kappa^{<\kappa}$.
\begin{enumerate}
\item[(a)] For $\kappa = \aleph_1$ for any stationary $S, W$, we have $\qbW \Vdash \diamondsuit_{\aleph_1}(S)$.
\item[(b)] For $\kappa$ that is not a strong limit,  $\qbW \Vdash \diamondsuit_{\kappa}$.
\end{enumerate}
\end{corollary}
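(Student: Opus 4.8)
The plan is to obtain both parts from \autoref{l5} by verifying its hypothesis $(\ast)$ with a suitable $\sigma$, and then to pass from a diamond on a thin stationary set to the stated diamond by enlarging the guessing set. First I record the elementary \emph{enlargement} observation: if $S' \subseteq S \subseteq \kappa$ are stationary and $\seq{d_\delta}{\delta \in S'}$ is a $\diamondsuit_\kappa(S')$-sequence, then extending it by arbitrary values (say the zero sequence) on $S \setminus S'$ gives a $\diamondsuit_\kappa(S)$-sequence, since the guessing set for $S$ contains that for $S'$ and is therefore stationary. Taking $S = \kappa$, this shows $\diamondsuit_\kappa(S') \Rightarrow \diamondsuit_\kappa$ for every stationary $S'$.

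For (a) let $\kappa = \aleph_1$. The standing assumption $\aleph_1 = \aleph_1^{<\aleph_1} = 2^{\aleph_0}$ is exactly \CH, so with $\sigma = \aleph_0$ clauses $(\ast)$(a) ($2^\sigma = \aleph_1 = \kappa = \kappa^{<\kappa}$) and $(\ast)$(b) ($2^{<\aleph_0} = \aleph_0 < \aleph_1$) hold. Here $\cf(\sigma) = \omega$, and since every limit ordinal below $\omega_1$ has cofinality $\omega$, the set $\aleph_1 \cap \cof(\cf(\sigma)) = \aleph_1 \cap \cof(\omega)$ contains a club. Given any stationary $S$, I would replace it by $S' = S \cap \cof(\omega)$, still stationary since only successor ordinals are discarded. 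For $(\ast)$(c) one uses the classical fact that $\checki[\aleph_1] = P(\aleph_1)$ (every subset of $\omega_1$ is approachable, which under \CH is immediate from \autoref{approachability_ideal_original} by closing the enumeration $\bar a$ of $\aleph_1^{<\aleph_1}$ under a club), so $S' \in \checki[\aleph_1]$. Then \autoref{l5} applied to $\sigma$, $S'$ and the given $W$ yields $\qbW \Vdash \diamondsuit_{\aleph_1}(S')$, and enlargement from $S'$ to $S$ gives $\qbW \Vdash \diamondsuit_{\aleph_1}(S)$.

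For (b) let $\kappa$ be regular uncountable (as $\kappa = \kappa^{<\kappa} \geq \aleph_1$) and not a strong limit, so $2^\rho \geq \kappa$ for some $\rho < \kappa$. I would take $\sigma$ to be the least cardinal with $2^\sigma \geq \kappa$; then $\sigma \leq \rho < \kappa$, and since $\sigma < \kappa$ we get $2^\sigma \leq \kappa^\sigma \leq \kappa^{<\kappa} = \kappa$, whence $2^\sigma = \kappa$, which is $(\ast)$(a). By minimality $2^\mu < \kappa$ for all cardinals $\mu < \sigma$, and as there are at most $\sigma < \kappa$ of them and $\kappa$ is regular, $2^{<\sigma} = \sup_{\mu < \sigma} 2^\mu < \kappa$, which is $(\ast)$(b). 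For $(\ast)$(c) I would invoke \autoref{successor_exponent_2} with $\lambda = \sigma$: its hypothesis $\kappa^{<\sigma} \leq \kappa^{<\kappa} = \kappa$ holds, so there is a stationary $S \subseteq \kappa \cap \cof(\cf(\sigma))$ with $S \in \checki[\kappa]$. Now \autoref{l5} gives $\qbW \Vdash \diamondsuit_\kappa(S)$, and enlargement to $\kappa$ yields $\qbW \Vdash \diamondsuit_\kappa$.

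The mathematical content all lies in \autoref{l5}, so this corollary reduces to verifying $(\ast)$ and is largely bookkeeping. The one step I expect to require care is $(\ast)$(c) in part (a): unlike part (b), where \autoref{successor_exponent_2} supplies a stationary approachable set directly, in (a) we must certify that an \emph{arbitrary} given stationary $S$ (after thinning to $S \cap \cof(\omega)$) is approachable, which rests on $\checki[\aleph_1] = P(\aleph_1)$. One should also check that the thinning $S \cap \cof(\omega)$ and the subsequent enlargement back to $S$ are compatible with the definition of $\diamondsuit_\kappa(S)$, but this is exactly the enlargement observation above.
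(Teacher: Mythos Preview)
Your proof is correct and follows the same route as the paper: both parts reduce to \autoref{l5} after choosing $\sigma$ and verifying $(\ast)$, with part (a) using that every stationary $S\subseteq\aleph_1$ lies in $\checki[\aleph_1]$ and part (b) invoking \autoref{successor_exponent_2}. You supply more detail than the paper (the explicit minimal choice of $\sigma$ in (b), the thinning $S\mapsto S\cap\cof(\omega)$ and subsequent enlargement in (a)), but the argument is the same.
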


\begin{proof} (a) Any stationary $S \subseteq \aleph_1$ is in the approachability ideal. (b) By \autoref{successor_exponent_2}, there is a stationary set $S \subseteq \kappa \cap \cof(\cf(\sigma))$ with $S \in \checki[\kappa]$.
\end{proof} 

This concludes the proof of \autoref{cor2}.

In the next proposition we show that the Bernstein technique \autoref{Bernstein_lemma} at $2^{\sigma} > \kappa$ may  provide a name of a collapse of $2^\sigma$ to $\kappa$ under additional hypotheses. 

\begin{proposition}\label{l4(2)}
If  $W$ is stationary in $\kappa$ and there is a cardinal $\sigma < \kappa$ such that 
$2^\sigma > \kappa $, $(2^{\sigma})^{2^{<\sigma}} \leq 2^\sigma$, and   $2^{<\sigma} < \kappa$, 
then $\qbW$ collapses this $2^\sigma$  to $\kappa$.

\end{proposition}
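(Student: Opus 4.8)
The plan is to recycle the name for a diamond from the proof of \autoref{l5}, turning it into a name for a surjection of $\kappa$ onto $({}^\sigma 2)^{\bV}$. Since $\qbW$ is $(<\kappa)$-closed (\autoref{l3}(1)) it preserves all cardinals $\leq \kappa$, so such a surjection witnesses that $(2^\sigma)^{\bV}$ has cardinality $\kappa$ in the extension, i.e.\ that $2^\sigma$ is collapsed to $\kappa$. The only structural change to \autoref{Bernstein_lemma} is that the Bernstein function must now have codomain ${}^\sigma 2$ (of size $2^\sigma > \kappa$) rather than ${}^\delta 2$, and it must be \emph{surjective along every one-to-one tree embedding}. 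So I would first isolate an unbounded set $Z \subseteq \kappa \cap \cof(\cf(\sigma))$ of \emph{admissible heights} and prove a collapse-flavoured variant of \autoref{Bernstein_lemma}: for each $\delta \in Z$ there is $h_\delta \colon {}^\delta 2 \to {}^\sigma 2$ such that for every one-to-one $\sigma$-tree embedding $f_1$ of height $\delta$ and every $y \in {}^\sigma 2$ there is $\varrho \in {}^\sigma 2$ with $h_\delta(\bar{f_1}(\varrho)) = y$. This is built by the same transfinite recursion as in \autoref{Bernstein_lemma}: enumerate the pairs $(f_1, y)$ with $f_1 \in \cF_{\sigma,\delta}$ one-to-one and $y \in {}^\sigma 2$, and at step $(f_1,y)$ pick a fresh $\varrho$ (possible because $\bar{f_1}$ is injective with range of size $2^\sigma$, while fewer than $2^\sigma$ values have been committed so far) and set $h_\delta(\bar{f_1}(\varrho)) = y$.

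With the $h_\delta$ in hand, set $\name{g}(\delta) = h_\delta(\name{\eta}\rest\delta)$ for $\delta \in Z$ and $\name{g}(\delta)=0$ otherwise, where $\name{\eta}$ is the generic branch of \autoref{name_eta}. To see that $\qbW$ forces $\name{g}$ to surject onto $({}^\sigma 2)^{\bV}$, fix $y \in ({}^\sigma 2)^{\bV}$ and $p \in \qbW$ and run \emph{verbatim} the fusion/approximation construction from the proof of \autoref{l5}: along a $\kappa$-approximating sequence I build a $\sigma$-tree of conditions $\seq{p_{\sigma_i,\varrho}}{\varrho \in {}^{\sigma_i}2,\ i<\cf(\sigma)}$ above $p$, reaching some $\delta \in Z \cap \acc(E)$ with $\delta > \lg(\tr(p))$. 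The trunks $f_1(\varrho) = \tr(p_{\dom(\varrho),\varrho})$ form a one-to-one $\sigma$-tree embedding of height $\delta$ (clause $(\odot)_2$(h) gives pairwise $\trl$-incomparable trunks), and for $\varrho \in {}^\sigma 2$ the limit condition $p_{\sigma,\varrho} = \bigcap_{i<\cf(\sigma)} p_{\sigma_i,\varrho\rest\sigma_i}$ forces $\name{\eta}\rest\delta = \bar{f_1}(\varrho)$. Now, instead of matching a name $\name{x}$ as in \autoref{l5}, I invoke the surjectivity of $h_\delta$ along $f_1$ to choose $\varrho$ with $h_\delta(\bar{f_1}(\varrho)) = y$; then $q := p_{\sigma,\varrho} \geq p$ forces $\name{g}(\delta) = h_\delta(\name{\eta}\rest\delta) = y$. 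Hence $\{q : \exists \delta\ q \Vdash \name{g}(\delta) = \check{y}\}$ is dense, so $y \in \range(\name{g})$ in every extension, and as $y$ ranges over $({}^\sigma 2)^{\bV}$ we obtain the desired surjection $\kappa \twoheadrightarrow ({}^\sigma 2)^{\bV}$.

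The main obstacle is the existence of the $h_\delta$ at cofinally many $\delta$, i.e.\ the bound $|\cF_{\sigma,\delta}| \leq 2^\sigma$, which is exactly what keeps the Bernstein recursion of length $|\cF_{\sigma,\delta}| \cdot 2^\sigma$ at $2^\sigma$ and so always leaves a fresh $\varrho$ available. This is where the cardinal-arithmetic hypotheses must be spent: by the last sentence of \autoref{treeembedding}(A) a one-to-one $\sigma$-tree embedding of height $\delta$ is determined by its restriction to $\bigcup_{i<\cf(\sigma)}{}^{\sigma_i}2$, a set of size $2^{<\sigma}$, with values among the nodes of ${}^{<\delta}2$, so $|\cF_{\sigma,\delta}| \leq (2^{<\delta})^{2^{<\sigma}}$. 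Since $(2^\sigma)^{2^{<\sigma}} \leq 2^\sigma$ forces $2^{2^{<\sigma}} = 2^\sigma$, one gets $2^\mu = 2^\sigma$ for every cardinal $\mu \in [\sigma, 2^{<\sigma}]$, whence $(2^{<\delta})^{2^{<\sigma}} \leq (2^\sigma)^{2^{<\sigma}} = 2^\sigma$ exactly at the heights $\delta$ with $\sup_{\alpha<\delta}|\alpha| \leq 2^{<\sigma}$; these constitute $Z$. The delicate point, and the place where $2^{<\sigma} < \kappa$ and the regularity of $\kappa$ enter, is to guarantee that $Z$ is cofinal in $\kappa$ and meets $\acc(E)$ above any prescribed level (so that the density argument applies to arbitrary $p$); this is what makes the two extra hypotheses on $\sigma$ do real work, and it is the only step that is not a transcription of the proof of \autoref{l5}, the fusion itself using only that $\qbW$ is $(<\kappa)$-closed with limits of splitting nodes at $W$-levels again splitting.
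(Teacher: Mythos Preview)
Your overall strategy coincides with the paper's: define $h_\delta$ by a Bernstein recursion so that $h_\delta \circ \bar{f_1}$ is surjective onto $2^\sigma$ for every one-to-one $\sigma$-tree embedding $f_1$ of height $\delta$, set the name to be $h_\delta(\name{\eta}\rest\delta)$, and establish density by building a $\sigma$-indexed tree of conditions $\seq{p_{\sigma_i,\varrho}}{i<\cf(\sigma),\ \varrho\in{}^{\sigma_i}2}$ above a given $p$. The paper's execution is more direct than yours: it uses no $\kappa$-approximating sequence and no approachability. The tree of conditions is built by hand (clauses $(\odot)$(a)--(f)), one simply sets $\delta_i = \sup\{\lg(\tr(p_{\sigma_i,\varrho})) : \varrho\in{}^{\sigma_i}2\}$ and $\delta = \sup_i \delta_i$, and $\cf(\delta)=\cf(\sigma)$ is automatic. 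Here there is no target set $S\in\checki[\kappa]$ to hit, so the elementary-submodel scaffolding you import from the proof of \autoref{l5} is unnecessary overhead.

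There is a genuine problem with your treatment of the admissible heights. Your sufficient condition for $\delta\in Z$, namely $\sup_{\alpha<\delta}|\alpha|\leq 2^{<\sigma}$, picks out only ordinals $\delta\leq(2^{<\sigma})^+$; since $2^{<\sigma}<\kappa$ by hypothesis, this set is bounded in $\kappa$, so your $Z$ cannot be cofinal and the density argument breaks down. The paper does not isolate any such $Z$: it simply invokes \autoref{Bernstein_lemma} with $\kappa$ replaced by $\kappa'=2^\sigma$ (so that the hypothesis $(\kappa')^{2^{<\sigma}}=\kappa'$ becomes exactly the assumption $(2^\sigma)^{2^{<\sigma}}\leq 2^\sigma$) and asserts the existence of a suitable $h_\delta$ for \emph{every} $\delta\in\kappa\cap\cof(\cf(\sigma))$, then lets the tree of conditions land wherever it lands.
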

 
\begin{proof} We fix a regular $\chi> \beth_\omega(\kappa)$ and let $\mathcal H(\chi) = (H(\chi), \in, <^*_\chi)$.
 Now we use \autoref{Bernstein_lemma} for $\kappa$ from their being now $\kappa' = 2^{\sigma}$.
 Using \autoref{Bernstein_lemma} we have for $\delta \in \kappa \cap \cof(\cf(\sigma))$, a function
  $h_\delta \colon {}^\sigma 2 \to {}^\delta 2$ such that if $f \colon {}^{\sigma >}2 \to {}^{\delta>} 2$ is one-to-one $\sigma$-tree embedding of height $\delta$, then $h_\delta''(\range(\bar{f})) = 2^{|\sigma|}$.

Now define a sequence of $\seq{\name{\tau}_\delta}{\delta \in \kappa \cap \cof(\cf(\sigma))}$ of 
of $\bQ$-names letting
\[
\qbW \Vdash (\forall \delta \kappa \cap \cof(\cf(\sigma)) (\name{\tau}_\delta  = h_\delta(\name{\eta} \rest \delta)
\]

Given $p \in \qbW$ and $\alpha\in 2^{\sigma}$, we
have to produce some $q \geq p$ and some $\delta \in W$ such that
\[q \Vdash \name{\tau}_\delta = \alpha.
\]
To this end, we build a tree of conditions
\[\seq{p_{\sigma_i, \varrho} }{i < \cf(\sigma), \varrho \in {}^{\sigma_i} 2}.
\]
such that, letting 
$p_{\sigma, \varrho} = \bigcap\{p_{\sigma_i, \varrho \rest \sigma_i} : i < \cf(\sigma\}$ we have for
any $\alpha' \in 2^{\sigma}$ there is some  $\varrho \in {}^\sigma 2$ with
\[p_{\sigma, \varrho} \Vdash \mbox{``}
h_\delta(\name{\eta} \rest \delta) = \alpha' \mbox{''},
\] 
so in particular for $\alpha'= \alpha$.

We construct a tree of conditions
$p_{\sigma, \varrho}$, $\varrho \in {}^\sigma 2$ and a $\sigma$ embedding $f_1$ of height $\delta$ sending $\varrho  $ to $\tr(p_{\dom(\varrho), \varrho})$.

By induction on $i \leq \cf(\sigma)$
we chose a triple $(\bar{p}_i, \bar{\gamma}_i, \delta_i)$ where $\bar{p}_i = \seq{p_{\sigma_i, \varrho}}{\varrho \in {}^{\sigma_i} 2}$,

such that for any $i<\cf(\sigma)$ for any $j < i$ the following holds:
\begin{enumerate}\item[$(\odot)$] 
\qquad $p_i$ is the $<^*_\chi$-least element with (a) to (f), where
\begin{enumerate} 

\item[(a)] $\bar{p}_i = \seq{p_{\sigma_i, \varrho}}{\varrho \in {}^{\sigma_i} 2}$;

\item [(b)] For each $i< \cf(\sigma)$ and each $\varrho \in {}^{\sigma_i} 2$, for any $j < i$, $p_{\sigma_j, \varrho \rest \sigma_j} \leq p_{\sigma_i, \varrho}$;

\item[(c)] for any $\varrho \in {}^{\sigma_i} 2$, $p_{\sigma_i, \varrho} \geq p$;

\item[(d)] for any $\varrho \in {}^{\sigma_i} 2$, $\lg(\tr(p_{\sigma_i, \varrho})) \geq \gamma_{\sigma_i,\varrho}$;

\item[(e)] $\seq{\tr(p_{\sigma_i, \varrho})}{\varrho \in {}^{\sigma_i} 2}$ consists of pairwise $\trl$-incomparable  elements of $p \cap 2^{<\delta}$;
 
\item[(f)] We let $\delta_i = \sup \{
\lg(\tr(p_{\sigma_i, \varrho})) : \varrho \in {}^{\sigma_i} 2\}$. 
For any $\varrho \in {}^{\sigma_{i+1}} 2$, $\lg(\tr(p_{\sigma_{i+1}, \varrho})), \gamma_{\sigma_{i+1}, \varrho} \in[\delta_i+1,  \kappa)$. 
\end{enumerate}
\end{enumerate}

There are no problems in the inductive choice. Again $2^{\sigma_i} < \kappa$ is crucial for (f). Note that
the $F_i:= \{\tr(p_{\sigma_i,\varrho} : \varrho \in {}^{\sigma_i} 2\}$ is a front of $p_i := \bigcup\{p_{\sigma_i, \varrho} : \varrho \in {}^{\sigma_i}2\}$. 

Now suppose that the induction is performed. We let for $i < \cf(\sigma)$, $\varrho \in {}^{\sigma_i} 2$, 
$\tr(p_{\sigma_i,\varrho}) := f_1(\varrho)$. Thus $f_1$ defines a $\sigma$-embedding of height $\delta$.
We let $f_2(\varrho) = \alpha' $ for any $\varrho \in {}^{\sigma_i} 2$ for any $i < \cf(\sigma)$.

 By the choice of $h_\delta$, 
there is some $\varrho \in {}^\sigma 2$ such 
\[p_{\sigma, \varrho} \Vdash h_\delta(\bar{f_1}(\varrho)) =  h_\delta(\name{\eta} \rest \delta) = \bar{f_2}(\varrho) = \alpha'.
\]
\end{proof}

\begin{remark}
\autoref{l4(2)}  is proved differently for ordinary $\kappa$-Sacks in \cite{MdSh:1191}, where Solovay partitions of stationary sets in pairwise disjoint stationary sets are used and Clause (2) \autoref{qb} is used. 
\end{remark}

\def\cprime{$'$} \def\germ{\frak} \def\scr{\cal} \ifx\documentclass\undefinedcs
  \def\bf{\fam\bffam\tenbf}\def\rm{\fam0\tenrm}\fi 
  \def\defaultdefine#1#2{\expandafter\ifx\csname#1\endcsname\relax
  \expandafter\def\csname#1\endcsname{#2}\fi} \defaultdefine{Bbb}{\bf}
  \defaultdefine{frak}{\bf} \defaultdefine{=}{\B} 
  \defaultdefine{mathfrak}{\frak} \defaultdefine{mathbb}{\bf}
  \defaultdefine{mathcal}{\cal} \defaultdefine{implies}{\Rightarrow}
  \defaultdefine{beth}{BETH}\defaultdefine{cal}{\bf} \def\bbfI{{\Bbb I}}
  \def\mbox{\hbox} \def\text{\hbox} \def\om{\omega} \def\Cal#1{{\bf #1}}
  \def\pcf{pcf} \defaultdefine{cf}{cf} \defaultdefine{reals}{{\Bbb R}}
  \defaultdefine{real}{{\Bbb R}} \def\restriction{{|}} \def\club{CLUB}
  \def\w{\omega} \def\exist{\exists} \def\se{{\germ se}} \def\bb{{\bf b}}
  \def\equivalence{\equiv} \let\lt< \let\gt>

\end{document}